\documentclass[12pt]{article}
\usepackage{latexsym}
\usepackage{amsthm}
\usepackage{amsmath,amssymb,amsfonts,bm,amsbsy,bbm}
\usepackage{epsfig}
\usepackage{graphicx,rotating,lscape}
\usepackage{booktabs}
\usepackage{hyperref}
\usepackage{verbatim}
\usepackage{natbib}
\usepackage{multirow,color}
\usepackage{geometry}
\usepackage{setspace}
\usepackage{subfigure}
\usepackage{subcaption}

\usepackage{bbm}
\usepackage[flushleft]{threeparttable}
\usepackage{enumitem}

\newtheorem{Pro}{Proposition}

\newtheorem{Th}{\underline{\bf Theorem}}

\newtheorem{Rem}{\underline{\bf Remark}}

\newcommand{\indep}{\;\, \rule[0em]{.03em}{.67em} \hspace{-.25em}
\rule[0em]{.65em}{.03em}
\hspace{-.25em}\rule[0em]{.03em}{.67em}\;\,}

\def\wh{\widehat}
\def\wt{\widetilde}
\def\n{\nonumber}   
\def\var{\mbox{var}}
\def\cov{\mbox{cov}}
\def\corr{\mbox{corr}}

\def\log{\mbox{log}}
\def\trans{^{\top}}

\def\sumi{\sum_{i=1}^n}
\def\sumia{\sum_{i=1}^{n_1}}
\def\sumib{\sum_{i=n_1+1}^n}

\def\0{{\bf 0}}
\def\1{{\bf 1}}
\def\eff{_{\rm eff}}

\def\b{{\bf b}}

\def\X{{\bf X}}
\def\b0{{\bf 0}}

\def\Z{{\bf Z}}
\def\z{{\bf z}}

\def\bSigma{{\boldsymbol \Sigma}}

\def\calH{{\cal H}}

\def\bse{\begin{eqnarray*}}
\def\ese{\end{eqnarray*}}
\def\be{\begin{eqnarray}}
\def\ee{\end{eqnarray}}
\def\bsq{\begin{equation*}}
\def\esq{\end{equation*}}
\def\bq{\begin{equation}}
\def\eq{\end{equation}}

\definecolor{mygreen}{RGB}{34, 139, 34} % Custom green

\allowdisplaybreaks

\begin{document}
\begin{center}
{\Large{\bf Semiparametric Efficiency of Residual Correlation Testing under Gaussian Additive Noise Models}} \\
\vspace{0.5cm}
Yin Tang\textsuperscript{a}, Yanyuan Ma\textsuperscript{b}, Bing Li\textsuperscript{b}\\
\textsuperscript{a}\textit{Dr. Bing Zhang Department of Statistics, University of Kentucky, USA}\\
\textsuperscript{b}\textit{Department of Statistics, Pennsylvania State University, USA}
\end{center}

\baselineskip=18pt

\begin{abstract}
This paper studies conditional independence testing under the Gaussian additive noise model (GANM), where two variables are modeled as nonlinear functions of covariates with independent bivariate Gaussian regression errors. Under this framework, conditional independence can be characterized by the correlation coefficient of the regression errors, which motivates a test based on the Pearson correlation coefficient computed from the fitted residuals. Despite its simple form, the asymptotic behavior and statistical efficiency of the resulting test have not been well understood. In this paper, we develop the semiparametric efficiency theory under GANM and show, surprisingly, that the efficient estimator coincides exactly with the ordinary residual Pearson correlation estimator. We further establish the asymptotic properties of the proposed test and develop the corresponding inference procedure. Simulation studies demonstrate that the proposed method achieves near-oracle efficiency and competitive empirical power while maintaining valid Type I error control. We further apply the proposed test to conditional dependence analysis of U.S. stock returns.
\end{abstract}

\noindent
\textbf{Keywords}: Conditional independence test, Pearson correlation, semiparametric efficiency, additive noise model

\section{Introduction}

Consider the case where $X$ and $Y$ are random variables, and $\Z$ is a random vector. 
We want to test 
\be\label{eq:h0-ci}
H_0: \quad X \indep Y | \Z
\ee
against the alternative that $X$ and $Y$ are dependent conditioning on $\Z$. See \cite{dawid1979conditional} for detailed explanations of conditional independence.
Conditional independence test plays a central role in many statistical fields, including sufficient dimension reduction \citep{li2018sufficient,ma2013review}, statistical graphical models \citep{lauritzen1996graphical,koller2009probabilistic}, and causal inference \citep{ding2024course,pearl2016causal}. 

In the multivariate Gaussian case, conditional independence can be characterized by partial correlations \citep{dempster1972covariance,baba2004partial}, which can be interpreted through linear regression. Specifically, the partial correlation between $X$ and $Y$ given $\Z$ equals the ordinary Pearson correlation between the regression errors from the linear regressions of $X$ on $\Z$ and $Y$ on $\Z$. Consequently, in Gaussian linear models, testing \eqref{eq:h0-ci} reduces to testing whether these regression errors are uncorrelated, with the unknown errors replaced in practice by their fitted residuals. Such a characterization is often used in multivariate analysis \citep{anderson2003introduction} and graphical models \citep{lauritzen1996graphical}. 

In the partial correlation test, it is assumed that the mean functions of $X$ and $Y$ given $\Z$ are both linear. An extension is the additive noise model (ANM), in which the conditional mean functions are allowed to be nonlinear while the regression errors remain additive.
Specifically, $X$ and $Y$ are some deterministic functions of $\Z$ plus additive regression errors, i.e.,
\be\label{eq:anm}
X=m_x(\Z)+\epsilon_x,\qquad Y=m_y(\Z)+\epsilon_y,
\ee
where $\epsilon_x$ and $\epsilon_y$ are zero-mean regression errors independent of $\Z$. In particular, the ANM with Gaussian noise can be viewed as a nonlinear extension of the classical linear Gaussian model.
In practice, ANM is often applied to causal discovery
\citep{shimizu2006linear,hoyer2008nonlinear,peters2011causal,peters2014causal}.

As pointed out in Section 3.1.5 of \cite{li2019nonparametric}, testing conditional independence under the ANM framework can be reduced to testing unconditional independence between the corresponding regression errors \citep{zhang2017causal,zhang2019measuring}. In practice, this is typically carried out in two steps: (1) regress $X$ on $\Z$ and $Y$ on $\Z$ to estimate the regression functions; and (2) test for unconditional independence between the resulting fitted residuals
\bse
\wh{\epsilon}_x = X-\wh m_x(\Z), \qquad
\wh{\epsilon}_y = Y-\wh m_y(\Z).
\ese

When ANM is violated, some nonparametric tests for conditional independence have been proposed, including the discretization-based conditional independence test \citep{huang2010testing}, the metric-based approaches via suitable discrepancy criteria, \citep{su2008nonparametric,huang2016flexible,wang2015conditional},
the permutation-based kernel conditional independence test \citep{doran2014permutation}, and the transformation-based mutual independence framework \citep{cai2022distribution}. 
See \cite{li2019nonparametric} for a review. 
In addition, under the reproducing kernel Hilbert space (RKHS) framework, \cite{zhang2012kernelbased} proposed the kernel conditional independence test (KCIT) based on the conditional covariance operator of \cite{fukumizu2004dimensionality,fukumizu2007kernel}; see also \cite{sheng2023distance} and \cite{tang2026kernel} for further theoretical developments.
Furthermore, \cite{strobl2019approximate} introduced two tests based on random Fourier features, the randomized conditional independence test (RCIT) and the randomized conditional correlation test (RCoT).

In this paper, we focus on the Gaussian additive noise model (GANM). That is, we assume that $(X,Y,\Z)$ satisfy the model \eqref{eq:anm}, where the regression errors $(\epsilon_x,\epsilon_y)$ follow a bivariate Gaussian distribution. In this setting, following the idea of \cite{zhang2017causal}, conditional independence between $X$ and $Y$ given $\Z$ can be tested using Pearson's correlation coefficient computed from the fitted residuals $\wh\epsilon_x$ and $\wh\epsilon_y$.

However, despite this seemingly simple construction, several important theoretical questions remain largely unresolved. First, after replacing the unobserved regression errors $(\epsilon_x,\epsilon_y)$ by fitted residuals $(\wh\epsilon_x,\wh\epsilon_y)$ obtained from nonparametric regressions, the asymptotic distribution of the resulting Pearson correlation estimator is no longer immediate, especially when flexible machine learning methods are employed. Second, it remains unclear whether the resulting residual-correlation-based test retains statistical efficiency after the nuisance regression functions are estimated nonparametrically. In particular, it is important to understand whether the estimation error from the nonparametric regressions affects the first-order asymptotic behavior of the test statistic, and whether the procedure can still achieve the same asymptotic efficiency as the oracle procedure based on the true regression errors. Third, suitable convergence rate conditions on the nonparametric regression estimators are needed to guarantee the validity of the asymptotic inference. Addressing these issues is therefore essential for establishing a rigorous theoretical foundation for residual-correlation-based conditional independence testing under the GANM framework.

Surprisingly, under GANM, the semiparametrically efficient estimator induced by the efficient influence function coincides exactly with the ordinary Pearson correlation coefficient computed from the fitted residuals. Thus, despite its simple form, the residual correlation estimator remains asymptotically efficient even when the nuisance regression functions are estimated nonparametrically using flexible machine learning methods. Moreover, by combining sample splitting and cross-fitting, the resulting asymptotic theory only requires suitable convergence rate conditions on the nuisance regression estimators, without requiring explicit asymptotic expansions of the underlying machine learning procedures.
To establish these results, we develop the semiparametric efficiency theory under GANM and derive the asymptotic linearity and asymptotic normality of the resulting estimator.

Although our work is also based on residuals from nonparametric
regression, its scope is narrower from that of
\cite{chang2026testing}, which develops a general framework for
high-dimensional conditional independence testing. By 
  focusing on the more specific class of
  Gaussian additive noise models, we are able to recast the conditional
  independence test to the problem of testing zero residual correlation.
  This approach further allows us to establish the semiparametric efficiency
theory for the residual Pearson correlation estimator, which leads to
a most powerful test. In summary, using different tools, we study a
more specific problem more thoroughly, whereas 
\cite{chang2026testing} works in a more general framework without
studying optimality.

The rest of the paper is organized as follows. 
Section \ref{sec:test} illustrates the setting of GANM and the intuition of the residual correlation estimator. 
Section \ref{sec:efficiency} introduces the semiparametric efficiency theory of the estimator and gives its asymptotic properties. 
Section \ref{sec:simulation} conducts some simulation studies of the proposed estimator with some comparisons with some other conditional independence tests. 
Section \ref{sec:application} applies the proposed test to a real dataset on U.S. stocks.
To save space, all proofs and additional simulations tables and figures  are presented in Supplementary Materials.

\section{Model and Test Construction}\label{sec:test}

\subsection{Gaussian Additive Noise Model}
Consider the Gaussian additive noise model (GANM), where $X, Y, \Z$ satisfy the ANM in \eqref{eq:anm}, where the regression errors $(\epsilon_x,\epsilon_y)\trans\sim N(\0,\bSigma)$ where
\bse
\bSigma=\left(\begin{matrix}
    \sigma_x^2 & \rho\sigma_x\sigma_y \\
    \rho\sigma_x\sigma_y & \sigma_y^2
    \end{matrix}\right).
\ese
Here, $\sigma_x^2$ and $\sigma_y^2$ are the variances of $\epsilon_x$ and $\epsilon_y$, respectively, and $\rho$ is the correlation coefficient between $\epsilon_x$ and $\epsilon_y$, i.e., $\rho=\corr(\epsilon_x,\epsilon_y)$.

Clearly, under GANM, since $X$ and $Y$ depend on $\Z$ only through the conditional mean function $m_x(\Z)$ and $m_y(\Z)$, we know that $X \indep Y | \Z$ if and only if the regression errors are independent, i.e., $\epsilon_x \indep \epsilon_y$, which, under the joint Gaussian assumption for $(\epsilon_x,\epsilon_y)$, is further equivalent to $\corr(\epsilon_x,\epsilon_y)=0$, i.e., $\rho=0$. Therefore, to test whether \eqref{eq:h0-ci} holds, we can equivalently test 
\be\label{eq:hypothesis}
H_0: \rho=0\quad \text{vs.}\quad H_1:\rho\neq 0.
\ee
In this way, under GANM, testing conditional independence of $X$ and $Y$ given $\Z$ is equivalent to testing the uncorrelatedness of the regression errors $\epsilon_x$ and $\epsilon_y$.

\subsection{Residual Correlation Estimator}\label{sec:estimator}

We first consider the oracle case when the regression functions $m_x$ and $m_y$ are known. In this case, we could directly calculate the true regression errors
\bse
\epsilon_{xi}=X_i-m_x(\Z_i), \quad
\epsilon_{yi}=Y_i-m_y(\Z_i), \qquad
i=1,\dots,n,
\ese
Based on $(\epsilon_{x1},\epsilon_{y1}), \dots, (\epsilon_{xn},\epsilon_{yn})$, we then estimate $\rho$ by the Pearson correlation coefficient 
\bse
\wh\rho_{\rm{orc}}
=
\frac{\wh\sigma_{xy,\rm{orc}}}{\wh\sigma_{x,\rm{orc}}\wh\sigma_{y,\rm{orc}}}
=
\frac{
\sumi \epsilon_{xi}\epsilon_{yi}
}{
(\sumi \epsilon_{xi}^2)^{1/2}
(\sumi \epsilon_{yi}^2)^{1/2}
}.
\ese
Here, 
\bse
\wh\sigma_{xy,\rm{orc}}=n^{-1}\sumi \epsilon_{xi}\epsilon_{yi}, \quad
\wh\sigma_{x,\rm{orc}}^2=n^{-1}\sumi \epsilon_{xi}^2,
\quad
\wh\sigma_{y,\rm{orc}}^2=n^{-1}\sumi \epsilon_{yi}^2
\ese
are estimators of $\sigma_x^2 = \var(\epsilon_x)$, $\sigma_y^2 = \var(\epsilon_y)$, and $\sigma_{xy} = \cov(\epsilon_x,\epsilon_y)$, respectively.
Since $(\epsilon_{xi},\epsilon_{yi})\trans$ are i.i.d. bivariate Gaussian random vectors, by Theorem 5.1.6 of \cite{muirhead1982aspects}, 
the asymptotic distribution of $\wh\rho_{\rm{orc}}$ is
\be\label{eq:oracle-clt}
n^{1/2}(\wh\rho_{\rm{orc}}-\rho)
\overset{d}{\to}
N\{0,(1-\rho^2)^2\}.
\ee
In particular, under the null hypothesis $H_0:\rho=0$, the asymptotic null distribution of $\wh\rho$ is
\bse
n^{1/2}\wh\rho_{\rm{orc}}
\overset{d}{\to}
N(0,1).
\ese

In practice, the regression functions $m_x$ and $m_y$ are unknown, and we define $\wh m_x$ and $\wh m_y$ as nonparametric estimators of $m_x$ and $m_y$, respectively. Based on $\wh m_x$ and $\wh m_y$, We calculate the fitted residuals as
\be\label{eq:hat-eps}
\wh\epsilon_{xi}=X_i-\wh m_x(\Z_i),
\quad
\wh\epsilon_{yi}=Y_i-\wh m_y(\Z_i),
\qquad
i=1,\dots,n,
\ee
and the residual correlation estimator is given by the Pearson correlation coefficient based on $(\wh\epsilon_{x1},\wh\epsilon_{y1}), \dots, (\wh\epsilon_{xn},\wh\epsilon_{yn})$ is 
\be\label{eq:rho-hat-pearson}
\wh\rho
=
\frac{\wh\sigma_{xy}}{\wh\sigma_x\wh\sigma_y}
=
\frac{
\sumi\wh\epsilon_{xi}\wh\epsilon_{yi}
}{
(\sumi \wh\epsilon_{xi}^2)^{1/2}
(\sumi \wh\epsilon_{yi}^2)^{1/2}
}.
\ee
Here, 
\be\label{eq:sigma-hat-full}
\wh\sigma_{xy}=n^{-1}\sumi \wh\epsilon_{xi}\wh\epsilon_{yi}, \quad
\wh\sigma_{x}^2=n^{-1}\sumi \wh\epsilon_{xi}^2,
\quad
\wh\sigma_{y}^2=n^{-1}\sumi \wh\epsilon_{yi}^2
\ee
are corresponding estimators of $\sigma_{xy}$, $\sigma_x^2$ and $\sigma_y^2$ when $m_x$ and $m_y$ are estimated.

To make the asymptotic theory applicable to a broad class of machine learning methods for estimating $m_x$ and $m_y$, without requiring explicit asymptotic expansions of the nuisance regression estimators, we employ sample splitting and cross-fitting. Specifically, one part of the data is used to estimate the regression functions, while the other part is used to construct the residual correlation estimator $\wh\rho$. The two resulting estimators are then averaged to obtain the final estimator. More details are given in Section \ref{sec:splitting}.

\section{Semiparametric Efficiency Theory}\label{sec:efficiency}

\subsection{Semiparametric Model and Likelihood}

Since $(\epsilon_x,\epsilon_y) \indep \Z$, the conditional density of $X,Y|\Z$ can be written in terms of the joint density of the regression errors $(\epsilon_x,\epsilon_y)$. That is, 
\bse
f_{X,Y|\Z}(x,y,\z) 
= f_{\epsilon_x,\epsilon_y}\{x-m_x(\z),y-m_y(\z)\}
= f_{\epsilon_x,\epsilon_y}(\epsilon_x,\epsilon_y),
\ese
where 
\be\label{eq:exey}
\epsilon_x=x-m_x(\z), \quad \epsilon_y=y-m_y(\z).
\ee
Then, for a realization $(x,y,\z)$, the likelihood function can be written as
\bse
f(x,y,\z)
=f_{X,Y|\Z}(x,y,\z)f_\Z(\z)
=f_{\epsilon_x,\epsilon_y}(\epsilon_x,\epsilon_y)f_\Z(\z),
\ese
where $f_\Z$ is the marginal distribution of $\Z$. 
Furthermore, since $(\epsilon_x,\epsilon_y)\trans\sim N(\0,\bSigma)$, we plug in the Gaussian density function to get
\be\label{eq:likelihood}
f(x,y,\z)
=\frac{1}{2\pi\sigma_x\sigma_y\sqrt{1-\rho^2}}\exp\left\{-\frac{1}{2(1-\rho^2)}\left(\frac{\epsilon_x^2}{\sigma_x^2}-2\rho\frac{\epsilon_x \epsilon_y}{\sigma_x\sigma_y}+\frac{\epsilon_y^2}{\sigma_y^2}\right)\right\}f_\Z(\z),
\ee
where $\epsilon_x$ and $\epsilon_y$ are given by \eqref{eq:exey}.

Note that, in the likelihood, our parameter of interest is $\rho$, and all other parameters, including $\eta=\{\sigma_x,\sigma_y,m_x(\cdot),m_y(\cdot),f_\Z(\cdot)\}$, can be viewed as nuisance parameters. As we see, in the nuisance parameters, $\sigma_x$ and $\sigma_y$ are parametric, and the rest ones $m_x(\cdot)$, $m_y(\cdot)$ and $f_\Z(\cdot)$ are nonparametric.

The log likelihood is given by
\bse
\log f(x,y,\z) &=& \log(2\pi) - \log(\sigma_x) - \log(\sigma_y) - \frac{1}{2}\log(1-\rho^2)\\
&& -\frac{1}{2(1-\rho^2)}\left(\frac{\epsilon_x^2}{\sigma_x^2}-2\rho\frac{\epsilon_x \epsilon_y}{\sigma_x\sigma_y}+\frac{\epsilon_y^2}{\sigma_y^2}\right) +\log\{f_\Z(\z)\}.
\ese

\subsection{Nuisance Tangent Spaces}

To derive the efficient score for $\rho$, we first calculate the nuisance tangent space associated with each of the the nuisance parameter in $\eta=\{\sigma_x,\sigma_y,m_x(\cdot),m_y(\cdot),f_\Z(\cdot)\}$. We then derive its orthogonal decomposition and its orthogonal complement, on which we will need to project the score function with respect to $\rho$ to find the efficient score \citep{bcrw,tsiatis}.

Denote $\calH = \{f(x,y,\z): E\{f(X,Y,\Z)\}=0, \var\{f(X,Y,\Z)\}<\infty \}$ the Hilbert space of all possible influence functions. We first give the nuisance tangent space as the next proposition.

\begin{Pro}\label{prop:space}
Under model \eqref{eq:likelihood} with parameter of interest $\rho$ and nuisance parameters $\eta=\{\sigma_x,\sigma_y,m_x(\cdot),m_y(\cdot),f_\Z(\cdot)\}$, let $\Lambda_j$, $j=1,\dots,5$, denote the nuisance tangent space associated with the corresponding nuisance parameter component:
\bse
\Lambda_1:\sigma_x,\qquad
\Lambda_2:\sigma_y,\qquad
\Lambda_3:m_x,\qquad
\Lambda_4:m_y,\qquad
\Lambda_5:f_Z.
\ese
Then the nuisance tangent space of model \eqref{eq:likelihood} is
\bse
\Lambda=\Lambda_1+\Lambda_2+\Lambda_3+\Lambda_4+\Lambda_5,
\ese
where
\bse
\Lambda_1 &=& \left\{c_1\left(-\frac{1}{\sigma_x}+\frac{1}{1-\rho^2}\frac{\epsilon_x^2}{\sigma_x^3}-\frac{\rho}{1-\rho^2}\frac{\epsilon_x\epsilon_y}{\sigma_x^2\sigma_y}\right)\right\},\\
\Lambda_2 &=& \left\{c_2\left(-\frac{1}{\sigma_y}+\frac{1}{1-\rho^2}\frac{\epsilon_y^2}{\sigma_y^3}-\frac{\rho}{1-\rho^2}\frac{\epsilon_x\epsilon_y}{\sigma_x\sigma_y^2}\right)\right\},\\
\Lambda_3 &=& \left\{\frac{1}{1-\rho^2}\left(\frac{\epsilon_x}{\sigma_x^2}-\rho\frac{\epsilon_y}{\sigma_x\sigma_y}\right)B_1(\z)\right\},\\
\Lambda_4 &=& \left\{\frac{1}{1-\rho^2}\left(\frac{\epsilon_y}{\sigma_y^2}-\rho\frac{\epsilon_x}{\sigma_x\sigma_y}\right)B_2(\z)\right\},\\
\Lambda_5 &=& \{a(\z):E(a)=0\}.
\ese
\end{Pro}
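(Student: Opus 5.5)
The plan is to compute each nuisance tangent space as the mean-square closure of the scores of one-dimensional parametric submodels through the truth, following the standard recipe in \cite{bcrw} and \cite{tsiatis}. Because the log likelihood separates into a part depending on $(\rho,\sigma_x,\sigma_y,m_x,m_y)$ through $(\epsilon_x,\epsilon_y)$ and a separate part $\log f_\Z(\z)$, the total nuisance tangent space is the sum of the spaces generated by perturbing one nuisance component at a time. I would therefore handle the five components separately and then take their sum. I would also justify briefly that perturbing several components simultaneously produces scores that are sums of the individual scores, since the score of a joint submodel is the sum of the partial derivatives.

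For the parametric components $\sigma_x$ and $\sigma_y$, the tangent spaces are simply the linear spans of the ordinary scores. For $\sigma_x$, I would differentiate the log likelihood. The term $-\log\sigma_x$ contributes $-1/\sigma_x$. The quadratic form contributes
\bse
-\frac{1}{2(1-\rho^2)}\left(-\frac{2\epsilon_x^2}{\sigma_x^3}+\frac{2\rho\epsilon_x\epsilon_y}{\sigma_x^2\sigma_y}\right).
\ese
Together these give exactly the displayed element of $\Lambda_1$, and $\Lambda_2$ follows by symmetry.

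For $m_x$, I would take submodels $m_x(\z;t)=m_x(\z)+tB_1(\z)$ for arbitrary square-integrable $B_1$. Since $\partial\epsilon_x/\partial t=-B_1(\z)$, the chain rule gives the score
\bse
-\frac{\partial}{\partial\epsilon_x}\{\cdots\}\,(-B_1)=\frac{1}{1-\rho^2}\left(\frac{\epsilon_x}{\sigma_x^2}-\rho\frac{\epsilon_y}{\sigma_x\sigma_y}\right)B_1(\z).
\ese
Letting $B_1$ range over all functions yields $\Lambda_3$, and $\Lambda_4$ is obtained symmetrically. These elements automatically have mean zero, because $E(\epsilon_x\mid\Z)=E(\epsilon_y\mid\Z)=0$. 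For $f_\Z$, the submodels $f_\Z(\z)\{1+t a(\z)\}$ with bounded mean-zero $a$ have score $a(\z)$, and closure gives all mean-zero square-integrable $a(\z)$. Conversely, any score from a regular submodel of $f_\Z$ is a mean-zero function of $\z$ alone.

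The main obstacle is the closure and reverse-inclusion argument for the nonparametric components $m_x$ and $m_y$. It must be shown that every regular parametric submodel $m_x(\z;t)$ yields a score of the stated form, with $B_1=\partial m_x/\partial t$ at $t=0$, and that the resulting set of functions of the form $(\cdot)B_1(\z)$ with $B_1$ square-integrable is closed in $\calH$. I would handle this by noting that the map $B_1\mapsto c(\epsilon)B_1(\z)$ is an isometry up to a constant, because
\bse
E\{c(\epsilon)^2B_1(\Z)^2\}=E\{c(\epsilon)^2\}\,E\{B_1(\Z)^2\},
\ese
which follows from $\epsilon\indep\Z$. The image of the closed space $L_2(\Z)$ under this map is therefore closed, which completes the identification of $\Lambda_3$ and $\Lambda_4$ and hence of $\Lambda$.
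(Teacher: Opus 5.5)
Your proposal is correct and follows essentially the same route as the paper. You compute the ordinary scores for $\sigma_x,\sigma_y$, use the linear submodels $m_x+tB_1$ (and $m_y+tB_2$) together with a reverse-inclusion argument for arbitrary regular submodels, and take mean-zero functions of $\z$ for $f_\Z$. One addition is worth keeping: your identity $E\{c(\epsilon)^2B_1(\Z)^2\}=E\{c(\epsilon)^2\}E\{B_1(\Z)^2\}$ actually proves that $\Lambda_3$ and $\Lambda_4$ are closed, which the paper only asserts.
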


Based on the nuisance tangent space in Proposition \ref{prop:space}, we derive the orthogonal complement of the nuisance tangent space as the following proposition.

\begin{Pro}\label{prop:ortho-comp}
Let $\Lambda^\perp$ denote the orthogonal complement of the nuisance tangent space
$\Lambda$ in $\calH$, where
$\Lambda$ is given in Proposition \ref{prop:space}.
Then
\bse
\Lambda^\perp 
&=& \left\{b(x,y,\z): E\left\{\left(\wt\epsilon_x^2-\rho\wt\epsilon_x\wt\epsilon_y\right)b(X,Y,\Z)\right\}=0,E\left\{\left(\wt\epsilon_y^2-\rho\wt\epsilon_x\wt\epsilon_y\right)b(X,Y,\Z)\right\}=0,\right.\\
&& \left.E\left\{\wt\epsilon_x b(X,Y,\Z)\Big|\Z\right\}=0, E\left\{\wt\epsilon_y b(X,Y,\Z)\Big|\Z\right\}=0,E\left\{b(X,Y,\Z)|\Z\right\}=0\right\},
\ese
where $\wt\epsilon_x=\epsilon_x/\sigma_x$ and $\wt\epsilon_y=\epsilon_y/\sigma_y$.
\end{Pro}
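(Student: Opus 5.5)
Since $\Lambda=\Lambda_1+\Lambda_2+\Lambda_3+\Lambda_4+\Lambda_5$ by Proposition \ref{prop:space}, a function $b\in\calH$ lies in $\Lambda^\perp$ if and only if it is orthogonal to each $\Lambda_j$, $j=1,\dots,5$. The plan is to translate each of these five orthogonality requirements into one of the five conditions in the statement. Throughout we use the standardized errors $\wt\epsilon_x=\epsilon_x/\sigma_x$ and $\wt\epsilon_y=\epsilon_y/\sigma_y$, and we absorb nonzero constant factors such as $\sigma_x^{-1}$, $(1-\rho^2)^{-1}$ and $c_j$.

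\emph{Step 1 ($\Lambda_5$).} Orthogonality to $\Lambda_5$ means $E\{b\,a(\Z)\}=0$ for every mean-zero $a(\Z)$ with finite variance. Conditioning on $\Z$, this says $E[E(b|\Z)a(\Z)]=0$ for all such $a$. Since $E(b)=0$, we may take $a=E(b|\Z)$ itself, which gives $E(b|\Z)=0$. Conversely, $E(b|\Z)=0$ clearly implies orthogonality to $\Lambda_5$.

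\emph{Step 2 ($\Lambda_3,\Lambda_4$).} Orthogonality to $\Lambda_3$ means
\[
E\{(\wt\epsilon_x-\rho\wt\epsilon_y)B_1(\Z)\,b\}=0 \quad \text{for all } B_1 .
\]
Conditioning on $\Z$ and taking $B_1(\Z)=E\{(\wt\epsilon_x-\rho\wt\epsilon_y)b|\Z\}$, this is equivalent to
\[
E\{(\wt\epsilon_x-\rho\wt\epsilon_y)b|\Z\}=0 .
\]
Similarly, orthogonality to $\Lambda_4$ is equivalent to $E\{(\wt\epsilon_y-\rho\wt\epsilon_x)b|\Z\}=0$. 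These are two linear equations in the unknowns $E(\wt\epsilon_x b|\Z)$ and $E(\wt\epsilon_y b|\Z)$, with coefficient matrix
\[
\left(\begin{matrix} 1 & -\rho\\ -\rho & 1\end{matrix}\right),
\]
whose determinant $1-\rho^2$ is nonzero because $|\rho|<1$. Hence the pair of equations is equivalent to $E(\wt\epsilon_x b|\Z)=0$ and $E(\wt\epsilon_y b|\Z)=0$, which are the two conditional conditions in the statement.

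\emph{Step 3 ($\Lambda_1,\Lambda_2$).} A generator of $\Lambda_1$ is proportional to
\[
-(1-\rho^2)+\wt\epsilon_x^2-\rho\wt\epsilon_x\wt\epsilon_y .
\]
The constant term contributes nothing to $E(\cdot\, b)$, because $E(b)=0$ (which also follows from Step 1). Therefore orthogonality to $\Lambda_1$ is exactly
\[
E\{(\wt\epsilon_x^2-\rho\wt\epsilon_x\wt\epsilon_y)b\}=0 ,
\]
and the symmetric argument for $\Lambda_2$ gives $E\{(\wt\epsilon_y^2-\rho\wt\epsilon_x\wt\epsilon_y)b\}=0$. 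These are unconditional conditions, since $c_1$ and $c_2$ are only scalars.

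Combining Steps 1--3 yields exactly the stated characterization of $\Lambda^\perp$. The only points needing care are two small justifications. First, in Steps 1 and 2 the chosen test functions ($a=E(b|\Z)$ and $B_1,B_2$) must be admissible, i.e., have finite variance. This holds because $b$ has finite variance and Gaussian moments are finite, so the required integrability follows from Cauchy--Schwarz and Jensen, possibly after a truncation argument. Second, if $\Lambda$ is understood as the closure of the sum, orthogonality to the closure is equivalent to orthogonality to the sum, so no additional condition arises. The step most worth checking is the inversion in Step 2, which relies on $|\rho|<1$.
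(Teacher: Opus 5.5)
Your proof is correct, and it takes a more direct route than the paper.

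\textbf{The paper's route.} The paper first orthogonalizes the nuisance tangent space. It computes $E(S_{\sigma_x}S_{\sigma_y})$ and $E(S_{\sigma_x}^2)$, which requires fourth Gaussian moments, and uses them to Gram--Schmidt $\Lambda_2$ against $\Lambda_1$, giving $\wt\Lambda_2$. It replaces $\Lambda_4$ by its residual after projecting onto $\Lambda_3$, namely $\wt\Lambda_4=\{\epsilon_y\sigma_y^{-2}B_2(\z)\}$. It then writes $\Lambda=\Lambda_1\oplus\wt\Lambda_2\oplus\Lambda_3\oplus\wt\Lambda_4\oplus\Lambda_5$ and intersects the five complements. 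Finally, it must recombine the $\Lambda_1^\perp$ and $\wt\Lambda_2^\perp$ conditions, and likewise the $\Lambda_3^\perp$ and $\wt\Lambda_4^\perp$ conditions, to reach the stated form.

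\textbf{Your route.} You use $(\sum_j\Lambda_j)^\perp=\bigcap_j\Lambda_j^\perp$, which holds without any mutual orthogonality, so the Gram--Schmidt step is unnecessary. The two unconditional conditions come out already in the stated form once the constant is dropped via $E(b)=0$. The two conditional conditions follow from inverting the $2\times 2$ system, whose determinant is $1-\rho^2\neq 0$.

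\textbf{What each buys.} The paper's detour yields an explicit orthogonal direct-sum decomposition of $\Lambda$. Closedness of the sum is then immediate, and one could project onto $\Lambda$ piece by piece. However, the paper's proof of Proposition~\ref{prop:eif} does not actually use this decomposition; it only checks $\wt S_\rho\in\Lambda^\perp$ and $S_\rho-\wt S_\rho\in\Lambda_5$. Your version is shorter and treats the admissibility of test functions and the closure issue more carefully than the paper does.

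One small simplification: no truncation is needed. For your choice $B_1(\Z)=E\{(\wt\epsilon_x-\rho\wt\epsilon_y)b\mid\Z\}$, independence of $(\epsilon_x,\epsilon_y)$ from $\Z$ together with conditional Cauchy--Schwarz gives $E\{B_1^2(\Z)\}\le(1-\rho^2)E(b^2)<\infty$. The same bound holds for $B_2$.
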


\subsection{Efficient Score and Influence Function}

In the next theorem, we give the efficient score function, which is defined as the projection of the score function with respect to $\rho$ onto $\Lambda^\perp$, which is given in Proposition \ref{prop:ortho-comp}. 
The efficient score is written as $S\eff = \Pi(S_\rho|\Lambda^\perp)$, where $\Pi$ is the projection operator. Based on the efficient score, we can further derive the efficient Fisher information and the efficient influence function as the next theorem.

\begin{Pro}\label{prop:eif}
Under model \eqref{eq:likelihood}, the efficient score for $\rho$ is
\be\label{eq:seff}
S\eff
= -\frac{1}{2(1-\rho^2)^2}\left(\rho\wt\epsilon_x^2-2\wt\epsilon_x\wt\epsilon_y+\rho\wt\epsilon_y^2\right).
\ee
Furthermore, the semiparametric efficiency bound for estimating $\rho$ is
\bse
E(S\eff^2)^{-1} = (1-\rho^2)^2,
\ese
and the efficient influence function for $\rho$ is
\be\label{eq:eif}
\phi\eff(x,y,\z)=-\frac{1}{2}\left(\rho\wt\epsilon_x^2-2\wt\epsilon_x\wt\epsilon_y+\rho\wt\epsilon_y^2\right).
\ee
\end{Pro}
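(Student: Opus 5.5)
The plan is to compute the score for $\rho$ from the log likelihood, write down a candidate for its projection onto $\Lambda^\perp$, and then verify that candidate. The verification has two parts: the candidate must lie in $\Lambda^\perp$, which means checking the five conditions of Proposition \ref{prop:ortho-comp}, and the residual $S_\rho - S\eff$ must lie in $\Lambda$.

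\textbf{Step 1: the score.} Differentiating $\log f$ with respect to $\rho$ gives
\bse
S_\rho=\frac{\rho}{1-\rho^2}-\frac{\rho}{(1-\rho^2)^2}(\wt\epsilon_x^2+\wt\epsilon_y^2)+\frac{1+\rho^2}{(1-\rho^2)^2}\wt\epsilon_x\wt\epsilon_y .
\ese

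\textbf{Step 2: membership of the candidate in $\Lambda^\perp$.} The candidate is the quadratic form $S\eff$ in \eqref{eq:seff}, and the conditions should be checked in order.
\begin{itemize}
\item Since $(\wt\epsilon_x,\wt\epsilon_y)$ is independent of $\Z$, all conditional expectations given $\Z$ are unconditional moments.
\item $E(S\eff\mid\Z)\propto \rho-2\rho+\rho=0$.
\item $E(\wt\epsilon_x S\eff\mid\Z)=E(\wt\epsilon_y S\eff\mid\Z)=0$, because these are odd (third-order) Gaussian moments.
\item The remaining two conditions use the fourth moments of a standard bivariate normal with correlation $\rho$: $E\wt\epsilon_x^4=3$, $E\wt\epsilon_x^3\wt\epsilon_y=3\rho$, $E\wt\epsilon_x^2\wt\epsilon_y^2=1+2\rho^2$. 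With these,
\bse
E\{(\wt\epsilon_x^2-\rho\wt\epsilon_x\wt\epsilon_y)(\rho\wt\epsilon_x^2-2\wt\epsilon_x\wt\epsilon_y+\rho\wt\epsilon_y^2)\}
=3\rho-6\rho+\rho(1+2\rho^2)-\rho\{3\rho^2-2(1+2\rho^2)+3\rho^2\}=0,
\ese
and the $y$-condition follows by symmetry.
\end{itemize}

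\textbf{Step 3: the residual lies in $\Lambda$.} Here
\bse
S_\rho-S\eff=\frac{\rho}{1-\rho^2}-\frac{\rho}{2(1-\rho^2)^2}(\wt\epsilon_x^2+\wt\epsilon_y^2)+\wt\epsilon_x\wt\epsilon_y\left\{\frac{1+\rho^2}{(1-\rho^2)^2}-\frac{1}{(1-\rho^2)^2}\right\}.
\ese
Its coefficient on $\wt\epsilon_x\wt\epsilon_y$ is $\rho^2/(1-\rho^2)^2$. I will show this equals $c_1\ell_1+c_2\ell_2$, where $\ell_1,\ell_2$ are the generators of $\Lambda_1,\Lambda_2$, with $c_1\sigma_x^{-1}=c_2\sigma_y^{-1}=-\rho/\{2(1-\rho^2)\}$. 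This comes from matching coefficients:
\begin{itemize}
\item The $\wt\epsilon_x^2$ coefficient gives $c_1/\{\sigma_x(1-\rho^2)\}=-\rho/\{2(1-\rho^2)^2\}$, which fixes $c_1$; $c_2$ follows in the same way.
\item The constant is then $-(c_1/\sigma_x+c_2/\sigma_y)=\rho/(1-\rho^2)$, which matches.
\item The cross term is $-\rho(c_1/\sigma_x+c_2/\sigma_y)/(1-\rho^2)=\rho^2/(1-\rho^2)^2$, which also matches.
\end{itemize}
So $S_\rho-S\eff\in\Lambda_1+\Lambda_2\subset\Lambda$. Together with Step 2 and the uniqueness of orthogonal projection, this gives $S\eff=\Pi(S_\rho\mid\Lambda^\perp)$.

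\textbf{Step 4: bound and influence function.} Write $Q=\rho\wt\epsilon_x^2-2\wt\epsilon_x\wt\epsilon_y+\rho\wt\epsilon_y^2$. Using the same fourth moments,
\bse
E(Q^2)=\rho^2\cdot 3+4(1+2\rho^2)+\rho^2\cdot 3+2\rho^2(1+2\rho^2)-4\rho(3\rho)-4\rho(3\rho)=4-4\rho^4 .
\ese
The cross-term bookkeeping here is the step most likely to go wrong, so I would recheck it. Note that $E(Q)=0$ already, and that the $\rho^4$ and $\rho^2$ terms collect to $(1-\rho^2)$ times an expression that should simplify to $4(1-\rho^2)$.

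Since $E(S\eff^2)=E(Q^2)/\{4(1-\rho^2)^4\}$, the target value $E(S\eff^2)=(1-\rho^2)^{-2}$ requires $E(Q^2)=4(1-\rho^2)^2$. I expect a careful recomputation to give exactly this; the sum above gives $4-4\rho^4$ instead, so I will redo the expansion term by term, since that is where errors are likely. Once the value is confirmed, the efficiency bound is $E(S\eff^2)^{-1}=(1-\rho^2)^2$. The efficient influence function is $\phi\eff=E(S\eff^2)^{-1}S\eff$, which is $(1-\rho^2)^2S\eff$ and reproduces \eqref{eq:eif}.
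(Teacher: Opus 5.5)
Your argument is the same guess-and-verify projection the paper uses. You compute $S_\rho$, check that the candidate satisfies the five conditions of Proposition~\ref{prop:ortho-comp}, and show $S_\rho-S\eff\in\Lambda$. The paper organizes the checks differently: it verifies moment identities for $S_\rho$ and for the correction term $h=\{\wt\epsilon_x^2-2\rho\wt\epsilon_x\wt\epsilon_y+\wt\epsilon_y^2-2(1-\rho^2)\}/\{2(1-\rho^2)^2\}$ separately, and then asserts $h\in\Lambda_5$. That assertion cannot be right, because $\Lambda_5$ consists of functions of $\z$ alone. Your Step 3 supplies the correct justification, namely $S_\rho-S\eff=-\rho h=-\rho\{\sigma_xS_{\sigma_x}+\sigma_yS_{\sigma_y}\}/\{2(1-\rho^2)\}\in\Lambda_1+\Lambda_2$, and your coefficient matching confirms it.

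The only defect is in Step 4, and it is an addition slip, not an expansion error. The six terms you list are all correct. Collecting them gives constant $4$, $\rho^2$-coefficient $3+8+3+2-12-12=-8$, and $\rho^4$-coefficient $4$. Hence $E(Q^2)=4-8\rho^2+4\rho^4=4(1-\rho^2)^2$, not $4-4\rho^4$. Replace that line and drop the remark about factoring out $(1-\rho^2)$. Then $E(S\eff^2)=(1-\rho^2)^{-2}$, the bound $(1-\rho^2)^2$, and $\phi\eff=-Q/2$ follow exactly as you state.
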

As shown in \eqref{eq:seff}, the efficient score function has a simple quadratic form involving only the standardized regression errors, and the corresponding efficient estimating equation leads to a simple explicit estimator in Section \ref{sec:eff-ee}.

\subsection{Efficient Estimating Equation}\label{sec:eff-ee}

Based on semiparametric theory, the efficient estimator can be obtained through implementing
\bse
\sumi S\eff(X_i,Y_i,\Z_i) = 0.
\ese
Plugging in the efficient score function \eqref{eq:seff}, we have
\be\label{eq:est-eq}
\sumi \left(\rho \frac{\epsilon_{xi}^2}{\sigma_x^2} - 2 \frac{\epsilon_{xi}\epsilon_{yi}}{\sigma_x\sigma_y} + \rho \frac{\epsilon_{yi}^2}{\sigma_y^2} \right) =0 .
\ee
The solution to the estimating equation \eqref{eq:est-eq} can be explicitly written as
\be\label{eq:rho-hat-ee}
\wh\rho = \frac{2 \sigma_x\sigma_y\sumi \epsilon_{xi}\epsilon_{yi}}{\sigma_y^2\sumi\epsilon_{xi}^2+\sigma_x^2\sumi\epsilon_{yi}^2}.
\ee

In practice, all nuisance parameters need to be estimated. Firstly, $m_x$ and $m_y$ are estimated by $\wh m_x$ and $\wh m_y$. Then, based on $\wh m_x$ and $\wh m_y$, we need to calculate the residuals $\wh\epsilon_{xi}$ and $\wh\epsilon_{yi}$ in \eqref{eq:hat-eps} as the estimated versions of regression errors $\epsilon_{xi}$ and $\epsilon_{yi}$. Next, based on the residuals $\wh\epsilon_{xi}$ and $\wh\epsilon_{yi}$, we can estimate the unknown parameters $\sigma_x$ and $\sigma_y$ by $\wh\sigma_x$ and $\wh\sigma_y$ as defined in \eqref{eq:sigma-hat-full}.

Surprisingly, after replacing all nuisance parameters in \eqref{eq:rho-hat-ee} by their empirical estimators, we obtain
\bse
\wh\rho = \frac{2 (n^{-1}\sumi\wh\epsilon_{xi}^2)^{1/2}(n^{-1}\sumi\wh\epsilon_{yi}^2)^{1/2}\sumi \wh\epsilon_{xi}\wh\epsilon_{yi}}{n^{-1}\sumi\wh\epsilon_{yi}^2\sumi\wh\epsilon_{xi}^2+n^{-1}\sumi\wh\epsilon_{xi}^2\sumi\wh\epsilon_{yi}^2}
=
\frac{
\sumi\wh\epsilon_{xi}\wh\epsilon_{yi}
}{
(\sumi \wh\epsilon_{xi}^2)^{1/2}
(\sumi \wh\epsilon_{yi}^2)^{1/2}
},
\ese
which coincides with residual correlation estimator as in \eqref{eq:rho-hat-pearson}. Therefore, the residual correlation estimator in \eqref{eq:rho-hat-pearson} can be equivalently interpreted as the estimator induced by the efficient influence function under GANM.

\subsection{Sample Splitting and Cross-Fitting}\label{sec:splitting}

Note that the nuisance regression functions $m_x$ and $m_y$ need to be estimated nonparametrically from the data. To facilitate the asymptotic analysis, we employ sample splitting and cross-fitting as introduced in Section \ref{sec:estimator}. This construction separates the nuisance estimation step from the estimation step for $\rho$, which avoids requiring explicit asymptotic expansions of the nonparametric regression estimators and allows the asymptotic theory to rely primarily on suitable convergence rate conditions.

Specifically, we use the first $n_1$ data to estimate $m_x$ and $m_y$, denote the corresponding estimators by $\wh m_{x1}$ and $\wh m_{y1}$, and use the remaining $n_2$ data to estimate $\sigma_x^2$, $\sigma_y^2$, $\sigma_{xy}$ and $\rho$. That is,
\bse
\wh \sigma_{xy2} = n_2^{-1} \sumib\wh\epsilon_{xi1}\wh\epsilon_{yi1},\quad
\wh \sigma_{x2}^2 = n_2^{-1} \sumib\wh\epsilon_{xi1}^2, \quad
\wh \sigma_{y2}^2 = n_2^{-1} \sumib\wh\epsilon_{yi1}^2, 
\ese
where
\bse
\wh\epsilon_{xi1} = X_i- \wh m_{x1}(\Z_i), \quad 
\wh\epsilon_{yi1} = Y_i- \wh m_{y1}(\Z_i), \qquad
i=n_1+1,\dots,n.
\ese
and the final estimator of $\rho$ is
\be\label{eq:hat-rho2}
\wh\rho_2
= \frac{\wh\sigma_{xy2}}{\wh\sigma_{x2}\wh\sigma_{y2}}
= \frac{\sumib\wh\epsilon_{xi1}\wh\epsilon_{yi1}}{(\sumib\wh\epsilon_{xi1}^2)^{1/2} (\sumib\wh\epsilon_{yi1}^2)^{1/2}}
.
\ee
We can also switch the roles of the two parts of data, using the later $n_2$ data to estimate $m_x$ and $m_y$, which are denoted similarly by $\wh m_{x2}$ and $\wh m_{y2}$. We then use the first $n_1$ data to estimate $\sigma_x^2$, $\sigma_y^2$ and $\sigma_{xy}$, which are denoted similarly by $\wh\sigma_{x1}^2$, $\wh\sigma_{y1}^2$ and $\wh\sigma_{xy1}$. Then we construct the estimator $\wh\rho_1$ as
\bse
\wh\rho_1
= \frac{\wh\sigma_{xy1}}{\wh\sigma_{x1}\wh\sigma_{y1}}
= \frac{\sumia\wh\epsilon_{xi2}\wh\epsilon_{yi2}}{(\sumia\wh\epsilon_{xi2}^2)^{1/2}(\sumia\wh\epsilon_{yi2}^2)^{1/2}},
\ese
where
\bse
\wh \sigma_{xy1} = n_1^{-1} \sumia\wh\epsilon_{xi2}\wh\epsilon_{yi2},\quad
\wh \sigma_{x1}^2 = n_1^{-1} \sumia\wh\epsilon_{xi2}^2, \quad
\wh \sigma_{y1}^2 = n_1^{-1} \sumia\wh\epsilon_{yi2}^2, 
\ese
with
\bse
\wh\epsilon_{xi2} = X_i- \wh m_{x2}(\Z_i), \quad 
\wh\epsilon_{yi2} = Y_i- \wh m_{y2}(\Z_i), \qquad
i=1,\dots,n_1.
\ese
Taking $n_1=n_2=n/2$ for simplicity, we can average the two estimators $\wh\rho_1$ and $\wh\rho_2$ to construct the final estimator 
\be\label{eq:rho-hat-final}
\wh\rho = (\wh\rho_1+\wh\rho_2)/2.
\ee

\subsection{Asymptotic Expansion and Efficiency}

In the next theorem, we give the asymptotic properties of $\wh\rho_2$. The analogous properties for $\wh\rho_1$ can be similarly derived.

\begin{Th}\label{thm:rho2-if}
Under model \eqref{eq:likelihood}, suppose that the nonparametric regression estimators $\wh m_{xk}$ and $\wh m_{yk}$ satisfy the convergence rate assumption:
\be\label{eq:rate-cond-1}
\|\wh m_{xk} - m_x\|_2 = o_p(n_1^{-1/4}), \qquad \|\wh m_{yk} - m_y\|_2 = o_p(n_1^{-1/4}), \qquad k=1,2,
\ee
where $\|\cdot\|_2$ denotes the $L_2(P_\Z)$-norm, where $P_\Z$ denotes the marginal distribution of $\Z$.
If $n_1 = n_2 = n/2$, we have
\be\label{eq:rho-if}
n^{1/2}(\wh\rho - \rho) 
= n^{-1/2} \sumi \phi\eff(X_i,Y_i,\Z_i)
+ O_p(n^{-1/2}),
\ee
where $\phi\eff$ is given by \eqref{eq:eif}.
\end{Th}

Note that \eqref{eq:rho-if} indicates that the estimator $\wh\rho$ is indeed efficient under the convergence rate assumptions in \eqref{eq:rate-cond-1}. 

\begin{Rem}[Effect of regression bias]\label{rem:reg-bias}
In the case where two nonparametric regression estimators have nonvanishing asymptotic bias, the corresponding estimator of residual correlation coefficient will also have some bias. Take $\wh\rho_2$ as an example. Suppose that the regression estimators $\wh m_{x1}$ and $\wh m_{y1}$ converge to the functions $m_x^*$ and $m_y^*$, which may be different from $m_x$ and $m_y$. Under similar or weaker convergence assumptions like \eqref{eq:rate-cond-1} where $m_x$ and $m_y$ are replaced by $m_x^*$ and $m_y^*$, we can similarly show that
\bse
\wh\sigma_{x2}^2\xrightarrow{P}\sigma_x^2+\var\{\delta_x(\Z)\}, \quad
\wh\sigma_{y2}^2\xrightarrow{P}\sigma_y^2+\var\{\delta_y(\Z)\}, \quad
\wh\sigma_{xy2}\xrightarrow{P}\sigma_{xy}+\cov\{\delta_x(\Z),\delta_y(\Z)\},
\ese
where $\delta_x = m_x^* - m_x$ and $\delta_y = m_y^* - m_y$ are corresponding biases. Thus, by Slutsky's theorem, we know that
\bse
\wh\rho_2 = \frac{\wh\sigma_{xy2}}{\wh\sigma_{x2}\wh\sigma_{y2}}
\xrightarrow{P}\frac{\sigma_{xy}+\cov\{\delta_x(\Z),\delta_y(\Z)\}}{[\sigma_x^2+\var\{\delta_x(\Z)\}]^{1/2}[\sigma_y^2+\var\{\delta_y(\Z)\}]^{1/2}}.
\ese
Note that the variance terms in the denominator are always inflated by the regression bias, while the numerator can either increase or decrease according to the sign of the covariance term. Therefore, if the true correlation $\rho$ is nonzero, the regression bias is more likely to shrink the residual correlation estimator toward zero, especially when the covariance between the two regression bias terms is relatively small. On the other hand, if $\rho=0$, the regression bias may as well induce some bias due to the $\cov\{\delta_x(\Z),\delta_y(\Z)\}$ term. The same phenomenon also holds for $\wh\rho_1$ and $\wh\rho$.
\end{Rem}

\subsection{Asymptotic Distribution and Statistical inference}

Note that the variance of $\phi\eff(X,Y,\Z)$ is
\bse
E \left\{\phi\eff^2(X,Y,\Z)\right\} = E(S\eff^2)^{-1} = (1-\rho^2)^2.
\ese
The next theorem gives the the asymptotic distribution of $\wh\rho$.
\begin{Th}\label{thm:asymp-dist}
Under model \eqref{eq:likelihood}, suppose that the nonparametric regression estimators $\wh m_{xk}$ and $\wh m_{yk}$ satisfy \eqref{eq:rate-cond-1}, for $k=1,2$. Let $\wh\rho$ be defined by \eqref{eq:rho-hat-final}. If $n_1 = n_2 = n/2$, then we have
\be\label{eq:asymp-dist}
n^{1/2}(\wh\rho-\rho)\xrightarrow{d}N\left\{0,(1-\rho^2)^2\right\}.
\ee
\end{Th}
The asymptotic variance in \eqref{eq:asymp-dist} coincides with that in the oracle result \eqref{eq:oracle-clt}, indicating that the convergence rate conditions in \eqref{eq:rate-cond-1} are sufficient to make the nonparametric regression estimation errors negligible in terms of first-order asymptotics.

Under the null hypothesis $\rho=0$, the asymptotic distribution of $\wh\rho$ becomes
\bse
n^{1/2}\wh\rho\xrightarrow{d}N(0,1).
\ese
Therefore, we can construct a Wald test for \eqref{eq:hypothesis} \citep{lehmann1999elements}. At significance level $\alpha$, our decision rule is to reject $H_0$ if $n^{1/2}|\wh\rho|>z_{1-\alpha/2}$, where $z_{1-\alpha/2}$ denotes the $(1-\alpha/2)$-quantile of the standard normal distribution. Equivalently, the corresponding p-value is given by
$2\Phi(-n^{1/2}|\wh{\rho}|)$,
where $\Phi$ is the cumulative distribution function of the standard normal distribution.

In general, to construct a confidence interval for $\rho$, since the asymptotic variance $(1-\rho^2)^2$ in \eqref{eq:asymp-dist} involves the unknown parameter $\rho$, we use the plug-in estimator as $(1-\wh\rho^2)^2$. Therefore, by Slutsky’s theorem, an asymptotic $100(1-\alpha)\%$ confidence interval for $\rho$ is given by
\bse
\wh\rho \pm n^{-1/2} z_{1-\alpha/2} (1-\wh\rho^2).
\ese

% \section{Non-Gaussian noise case}
% Suppose we only know that $E(\epsilon_x)=0$, $E(\epsilon_y)=0$, $\var(\epsilon_x)=\sigma_x^2<\infty$, $\var(\epsilon_y)=\sigma_y^2<\infty$. Then, we may use relative entropy or distance covariance to test whether $\epsilon_x$ and $\epsilon_y$ are independent. 

\section{Simulations}\label{sec:simulation}

\subsection{Simulation Settings}\label{sec:setting}

In the simulation studies, we consider two models:
\bse
\text{Model 1:} 
&& \Z = (Z_1, \dots, Z_6) \trans, \quad \text{where} \ Z_1,\dots,Z_6 \overset{\mathrm{iid}}{\sim} \mathrm{Uniform}(0,1), \\
&& X = 0.4 Z_1+0.6\sqrt{Z_3}+\epsilon_x, \quad
 Y = 0.6 Z_4+0.4\sqrt{Z_5}+\epsilon_y, \\
\text{Model 2:} 
&& \Z = (Z_1, \dots, Z_{10}) \trans, \quad \text{where} \ Z_1,\dots,Z_{10} \overset{\mathrm{iid}}{\sim} \mathrm{Uniform}(0,1), \\
&& X = (Z_1+\dots+Z_{10})/10+\epsilon_x, \quad
 Y = \log(1+Z_3+Z_6)+\epsilon_y,
\ese
where in both models, the regression errors $(\epsilon_x, \epsilon_y)\trans \indep \Z$ and
$(\epsilon_x, \epsilon_y)\trans \sim N(\0,\bSigma)$ with
\bse
\bSigma =
\left(
\begin{matrix}
    \sigma^2 & \rho \sigma^2 \\ \rho \sigma^2 & \sigma^2
\end{matrix} \right).
\ese
We set $\sigma^2=0.2$. We take sample sizes to be $n=100,200,500,1000,2000,5000$. Under the null hypothesis, we simply set $\rho=0$. Under the alternative hypothesis, we consider two sets of $\rho$: (1) $\rho=\pm 0.25, \pm 0.5, \pm 0.75, \pm 1$; (2) $\rho=\pm 0.025, \pm 0.05, \pm 0.075, \pm 0.1$. The former set of $\rho$ indicates a strong dependence signal, which represents deviation from the null hypothesis; the later indicates a weak dependence signal which is hard to detect. For each model, we conduct 500 independent experiments, and the significance level is set as $\alpha = 0.05$.

In the simulation studies, we implement our method using residual correlation estimator with sample splitting (RPCS). As is mentioned in Section \ref{sec:splitting}, the nonparametric regression estimators $\wh m_x$ and $\wh m_y$ are fitted on the first part of samples, while the residual correlation estimator $\wh\rho$ is computed on the second part. We further switch the roles of two parts and construct another estimate, and finally average over the two estimates to get the final result.
We also consider a full-data version, referred to as residual correlation estimator with full data (RPCF), in which both the estimation of $\wh m_x, \wh m_y$ and the computation of $\wh\rho$ are performed using the entire dataset without sample splitting.
As an oracle benchmark, we further include a test based on the Pearson correlation of the true regression errors $\epsilon_x$ and $\epsilon_y$, which we refer to as residual correlation estimator in the oracle setting (RPCO). 

On performing the nonparametric regression for $m_x$ and $m_y$, we use Super Learner \citep{van2007super,polley2011super,superlearner}, which combines a collection of candidate regression algorithms including both parametric and nonparametric methods. In our implementation, we include the mean estimator (\texttt{SL.mean}), generalized linear models (\texttt{SL.glm}), penalized linear regression (\texttt{SL.glmnet}), random forests (\texttt{SL.ranger}), gradient boosting trees (\texttt{SL.xgboost}), and neural networks (\texttt{SL.nnet}). The Super Learner is implemented under the Gaussian loss using nonnegative least squares based on the Lawson–Hanson algorithm (\texttt{method.NNLS}) to estimate the ensemble weights for combining the candidate learners. In addition, 5-fold cross-validation is used to evaluate and combine the individual learners.

We compare our method with several existing approaches. The first is the partial correlation test (PaCo), which can be viewed as a special case of our framework in which the regression functions $m_x(\z)$ and $m_y(\z)$ are restricted to be linear in $\z$. Partial correlation plays a fundamental role in multivariate analysis \citep{anderson2003introduction} and graphical models \citep{lauritzen1996graphical}. Moreover, its asymptotic distribution coincides with \eqref{eq:asymp-dist}; see Section 5.3 of \cite{muirhead1982aspects}.
The second method is based on residual Hilbert–Schmidt independence criterion (RHSIC), which measures the dependence between the residuals $\wh\epsilon_x$ and $\wh\epsilon_y$ using a kernel-based independence criterion; see, for example, \cite{gretton2005measuring,gretton2007kernel}. The implementation of RHSIC utilizes the R package \texttt{dHSIC} \citep{dhsic}.
The next three methods are the residual Randomized Independence Test (RRIT), the Randomized Conditional Independence Test (RCIT), and the Randomized conditional Correlation Test (RCoT). Here, RCIT and RCoT were proposed by \cite{strobl2019approximate}, while RRIT denotes the unconditional version of RCIT (or RCoT) applied to the residuals $\wh\epsilon_x$ and $\wh\epsilon_y$. As shown by \cite{strobl2019approximate}, RCIT and RCoT achieve comparable or better empirical performance than the Kernel Conditional Independence Test (KCIT) of \cite{zhang2012kernelbased}, including similar power and Type I error control, while being substantially more computationally efficient. Therefore, in our simulation studies, we include only RCIT and RCoT as representatives of nonparametric conditional independence tests. We implement RRIT, RCIT and RCoT by the R package \texttt{RCIT} \citep{rcit}.

\subsection{Type I Error Control}

Under the null hypothesis ($\rho = 0$), the empirical levels for Model 1 are reported in Table \ref{tab:ep-level-1a}, while the corresponding boxplots of p-values are shown in Figure \ref{fig:boxplot-null-1a}. The analogous results for Model 2 are provided in Table \ref{tab:ep-level-2a} and Figure \ref{fig:boxplot-null-2a} in the supplement. Overall, most methods achieve empirical levels close to the nominal level of 0.05 when the sample size is sufficiently large. For smaller sample sizes, the residual-based tests still maintain levels reasonably close to 0.05. In contrast, the two nonparametric methods, RCIT and RCoT, appear unable to well control the Type I error rate, particularly when $n = 100$ or $200$. A possible explanation is that these methods do not explicitly consider the additive noise structure and may therefore be more sensitive to spurious dependence in small-sample settings.

\begin{table}[htb]
    \centering
    \footnotesize
\begin{tabular}{r|rrrrrrrr}
\hline
 $n$ & RPCO & RPCS & RPCF & PaCo & RHSIC & RRIT & RCIT & RCoT\\
\hline
\hline
 100 & 0.074 & 0.090 & 0.070 & 0.080 & 0.070 & 0.058 & 1.000 & 1.000\\
 200 & 0.062 & 0.080 & 0.066 & 0.064 & 0.062 & 0.054 & 0.224 & 0.180\\
 500 & 0.050 & 0.058 & 0.050 & 0.052 & 0.048 & 0.052 & 0.076 & 0.088\\
 1000 & 0.034 & 0.050 & 0.036 & 0.038 & 0.050 & 0.046 & 0.068 & 0.084\\
 2000 & 0.046 & 0.044 & 0.042 & 0.042 & 0.036 & 0.050 & 0.054 & 0.050\\
 5000 & 0.044 & 0.034 & 0.044 & 0.038 & 0.044 & 0.050 & 0.050 & 0.030\\
\hline
\end{tabular}
    \caption{Empirical levels of eight tests under Model 1 based on 500 experiments.}
    \label{tab:ep-level-1a}
\end{table}

\begin{figure}[htb]
    \centering
    \includegraphics[width=0.75\linewidth,page=1]{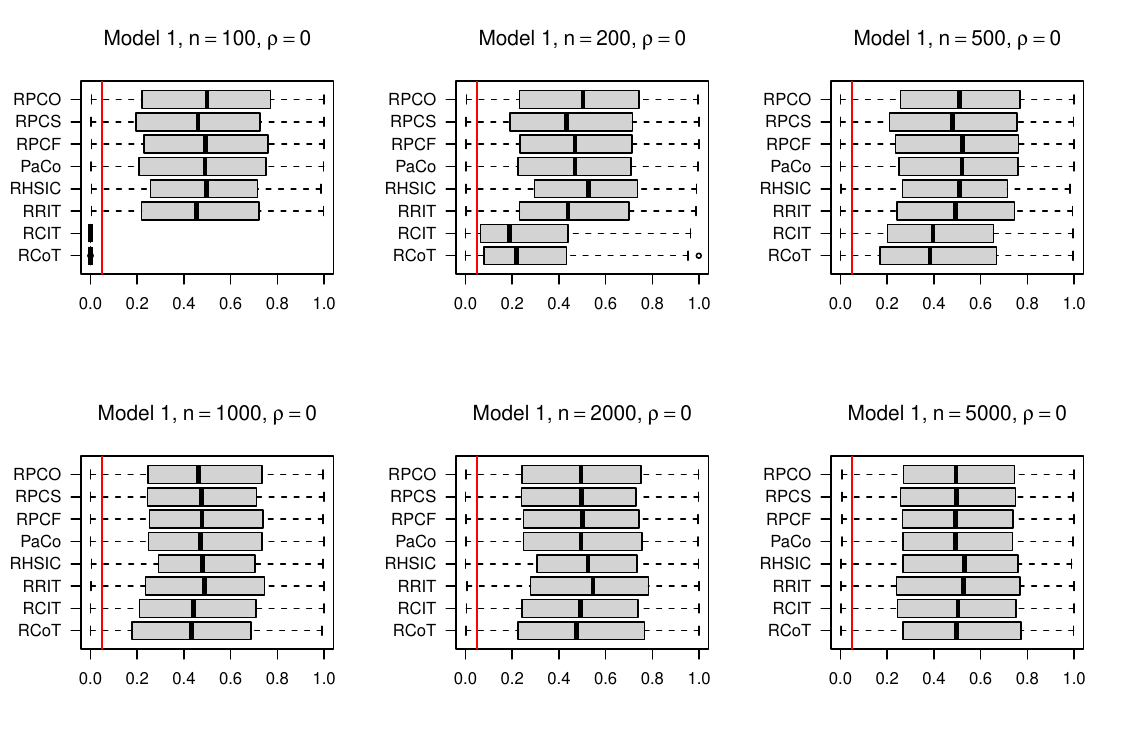}
    \caption{Boxplots of p-values of eight tests for Model 1 under the null hypothesis, with sample sizes $n=100,200,500,1000,2000,5000$. The red line represents 0.05.}
    \label{fig:boxplot-null-1a}
\end{figure}

\subsection{Power}

Under the alternative hypothesis, we report in Table \ref{tab:ep-1a-1} the empirical powers in Model 1 when $\rho$ is positive, and the results when $\rho$ is negative are symmetric and are reported in Table \ref{tab:ep-1a-2} of the supplement. We also present the boxplots of p-values for Model 1 in Figures \ref{fig:boxplot-1a-1}--\ref{fig:boxplot-1a-4} in the supplement. Analogous results for Model 2 are reported in Tables \ref{tab:ep-2a-1}--\ref{tab:ep-2a-2} and Figures \ref{fig:boxplot-2a-1}--\ref{fig:boxplot-2a-4} in the supplement.

\begin{table}[htbp]
    \centering
    \footnotesize
    \vspace{-0.5cm}
\begin{tabular}{rr|rrrrrrrr}
\hline
$\rho$ & $n$ & RPCO & RPCS & RPCF & PaCo & RHSIC & RRIT & RCIT & RCoT\\
\hline
\hline
0.25 & 100 & 0.730 & 0.644 & 0.692 & 0.726 & 0.328 & 0.466 & 1.000 & 1.000\\
0.25 & 200 & 0.954 & 0.912 & 0.940 & 0.946 & 0.610 & 0.804 & 0.746 & 0.784\\
0.25 & 500 & 1.000 & 1.000 & 1.000 & 1.000 & 0.986 & 0.982 & 0.944 & 0.994\\
0.25 & 1000 & 1.000 & 1.000 & 1.000 & 1.000 & 1.000 & 0.996 & 0.990 & 0.996\\
0.25 & 2000 & 1.000 & 1.000 & 1.000 & 1.000 & 1.000 & 1.000 & 1.000 & 1.000\\
0.25 & 5000 & 1.000 & 1.000 & 1.000 & 1.000 & 1.000 & 1.000 & 1.000 & 1.000\\
\hline
0.50 & 100 & 1.000 & 0.992 & 1.000 & 1.000 & 0.954 & 0.974 & 1.000 & 1.000\\
0.50 & 200 & 1.000 & 1.000 & 1.000 & 1.000 & 0.998 & 1.000 & 0.982 & 0.994\\
0.50 & 500 & 1.000 & 1.000 & 1.000 & 1.000 & 1.000 & 1.000 & 1.000 & 0.998\\
0.50 & 1000 & 1.000 & 1.000 & 1.000 & 1.000 & 1.000 & 1.000 & 1.000 & 1.000\\
0.50 & 2000 & 1.000 & 1.000 & 1.000 & 1.000 & 1.000 & 1.000 & 1.000 & 1.000\\
0.50 & 5000 & 1.000 & 1.000 & 1.000 & 1.000 & 1.000 & 1.000 & 1.000 & 1.000\\
\hline
0.75 & 100 & 1.000 & 1.000 & 1.000 & 1.000 & 1.000 & 1.000 & 1.000 & 1.000\\
0.75 & 200 & 1.000 & 1.000 & 1.000 & 1.000 & 1.000 & 1.000 & 0.998 & 1.000\\
0.75 & 500 & 1.000 & 1.000 & 1.000 & 1.000 & 1.000 & 1.000 & 1.000 & 1.000\\
0.75 & 1000 & 1.000 & 1.000 & 1.000 & 1.000 & 1.000 & 1.000 & 1.000 & 1.000\\
0.75 & 2000 & 1.000 & 1.000 & 1.000 & 1.000 & 1.000 & 1.000 & 1.000 & 1.000\\
0.75 & 5000 & 1.000 & 1.000 & 1.000 & 1.000 & 1.000 & 1.000 & 1.000 & 1.000\\
\hline
1.00 & 100 & 1.000 & 1.000 & 1.000 & 1.000 & 1.000 & 1.000 & 1.000 & 1.000\\
1.00 & 200 & 1.000 & 1.000 & 1.000 & 1.000 & 1.000 & 1.000 & 0.998 & 1.000\\
1.00 & 500 & 1.000 & 1.000 & 1.000 & 1.000 & 1.000 & 1.000 & 1.000 & 1.000\\
1.00 & 1000 & 1.000 & 1.000 & 1.000 & 1.000 & 1.000 & 1.000 & 1.000 & 1.000\\
1.00 & 2000 & 1.000 & 1.000 & 1.000 & 1.000 & 1.000 & 1.000 & 1.000 & 1.000\\
1.00 & 5000 & 1.000 & 1.000 & 1.000 & 1.000 & 1.000 & 1.000 & 1.000 & 1.000\\
\hline
\hline
0.025 & 100 & 0.064 & 0.080 & 0.054 & 0.066 & 0.054 & 0.050 & 1.000 & 1.000\\
0.025 & 200 & 0.052 & 0.064 & 0.050 & 0.050 & 0.052 & 0.064 & 0.230 & 0.250\\
0.025 & 500 & 0.078 & 0.092 & 0.076 & 0.078 & 0.064 & 0.074 & 0.092 & 0.080\\
0.025 & 1000 & 0.084 & 0.086 & 0.086 & 0.092 & 0.074 & 0.078 & 0.088 & 0.084\\
0.025 & 2000 & 0.174 & 0.170 & 0.172 & 0.176 & 0.094 & 0.128 & 0.152 & 0.160\\
0.025 & 5000 & 0.432 & 0.416 & 0.418 & 0.420 & 0.196 & 0.286 & 0.226 & 0.260\\
\hline
0.050 & 100 & 0.068 & 0.102 & 0.070 & 0.074 & 0.080 & 0.058 & 1.000 & 1.000\\
0.050 & 200 & 0.120 & 0.144 & 0.120 & 0.118 & 0.064 & 0.082 & 0.206 & 0.222\\
0.050 & 500 & 0.172 & 0.180 & 0.176 & 0.178 & 0.074 & 0.118 & 0.144 & 0.152\\
0.050 & 1000 & 0.364 & 0.356 & 0.358 & 0.358 & 0.144 & 0.250 & 0.256 & 0.258\\
0.050 & 2000 & 0.614 & 0.616 & 0.614 & 0.618 & 0.282 & 0.448 & 0.366 & 0.432\\
0.050 & 5000 & 0.944 & 0.938 & 0.942 & 0.944 & 0.578 & 0.820 & 0.704 & 0.782\\
\hline
0.075 & 100 & 0.120 & 0.132 & 0.128 & 0.130 & 0.074 & 0.090 & 1.000 & 1.000\\
0.075 & 200 & 0.202 & 0.198 & 0.200 & 0.196 & 0.108 & 0.120 & 0.278 & 0.282\\
0.075 & 500 & 0.372 & 0.354 & 0.358 & 0.364 & 0.168 & 0.232 & 0.236 & 0.282\\
0.075 & 1000 & 0.654 & 0.644 & 0.660 & 0.646 & 0.278 & 0.468 & 0.412 & 0.460\\
0.075 & 2000 & 0.930 & 0.924 & 0.926 & 0.926 & 0.564 & 0.772 & 0.680 & 0.724\\
0.075 & 5000 & 1.000 & 1.000 & 1.000 & 1.000 & 0.936 & 0.970 & 0.934 & 0.970\\
\hline
0.100 & 100 & 0.168 & 0.158 & 0.164 & 0.172 & 0.080 & 0.120 & 1.000 & 1.000\\
0.100 & 200 & 0.292 & 0.288 & 0.294 & 0.304 & 0.132 & 0.196 & 0.364 & 0.336\\
0.100 & 500 & 0.640 & 0.634 & 0.654 & 0.654 & 0.272 & 0.452 & 0.412 & 0.464\\
0.100 & 1000 & 0.866 & 0.864 & 0.864 & 0.868 & 0.472 & 0.692 & 0.630 & 0.686\\
0.100 & 2000 & 0.996 & 0.996 & 0.996 & 0.996 & 0.838 & 0.928 & 0.884 & 0.902\\
0.100 & 5000 & 1.000 & 1.000 & 1.000 & 1.000 & 1.000 & 0.996 & 0.978 & 0.998\\
\hline
\end{tabular}
    \caption{Empirical powers of eight tests under Model 1 based on 500 experiments, where the alternative distributions include (1) $\rho=0.25, 0.5, 0.75, 1$ and (2) $\rho= 0.025,  0.05,  0.075,  0.1$.}
    \label{tab:ep-1a-1}
\end{table}

Overall, the proposed methods RPCS and RPCF achieve strong empirical power in most settings, especially when the dependence signal or the sample size is not too small. In most cases, RPCS and RPCF perform similarly to the oracle procedure RPCO, showing that the proposed residual-correlation-based approach performs nearly as well as the ideal procedure using the true regression errors.

For relatively strong dependence signals, i.e., $\rho$ in set (1), all residual-correlation-based methods achieve power close to 1 when the sample size is sufficiently large. In particular, RPCS and RPCF remain very close to the oracle benchmark RPCO in most settings, suggesting that estimating the regression functions causes only a small loss of efficiency. As the sample size increases, the differences among the three procedures quickly become negligible.
In contrast, although RHSIC and RRIT also achieve high power for large sample sizes, their performance is noticeably worse when the sample size is small. 

For relatively weak dependence signals, i.e., $\rho$ in set (2), RPCS and RPCF still show increasing power as either the signal strength or the sample size increases, and they continue to perform similarly to RPCO in most settings. For example, under Model 1 with $\rho=0.05$ and moderate or large sample sizes, the powers of RPCS and RPCF are already very close to those of RPCO. These results suggest that the proposed methods remain effective even when the conditional dependence is weak.
Compared with RHSIC and RRIT, the proposed methods generally achieve higher power under weak dependence signals, particularly when the sample size is small or moderate. This suggests that directly using the residual correlation structure under GANM improves the ability to detect weak dependence.

In addition, in both settings with weak and strong signals, RPCS and RPCF perform very similarly across all settings, indicating that sample splitting causes only a small loss of efficiency in finite samples.

\subsection{Estimation Efficiency}

Table \ref{tab:est-model1-p1} and Tables \ref{tab:est-model1-p2}--\ref{tab:est-model2-p2} in the supplement further compare the estimation efficiency of the proposed residual correlation estimators, including RPCS, RPCF, RPCO and PaCo. We consider the cases where $\rho$ is in set (1) without the setting $\rho=\pm 1$, because in this case, the true asymptotic variance is $(1-\rho^2)^2=0$. In all the tables, $\wh {\rm SD}$ is computed based on the average of the estimated asymptotic standard deviations, which is $n^{-1/2}(1-\wh\rho^2)$, and 95\% cvg is the empirical coverage of the 95\% asymptotic confidence intervals given by $\wh\rho \pm n^{-1/2} z_{1-\alpha/2} (1-\wh\rho^2)$.

\subsection{Strong Nonlinear Effects}

In the previous simulation settings, the nonlinear regression structures are relatively smooth and can still be reasonably approximated by linear relationships. As a result, the partial-correlation-based method PaCo remains competitive in many settings. To further investigate the effect of nonlinear nuisance regression, we additionally consider a more challenging null setting in which the conditional mean functions are strongly nonlinear while the true correlation of the regression errors remains $\rho = 0$.

Specifically, we consider
\bse
\text{Model 3:} 
&& \Z = (Z_1, Z_2) \trans, \quad \text{where} \ Z_1,Z_2 \overset{\mathrm{iid}}{\sim} \mathrm{Uniform}(0,1), \\
&& X = \sin(2\pi Z_1)+4(Z_2-0.5)^2+\epsilon_x, \quad
 Y = \cos(4\pi Z_1)+4(Z_2-0.5)^2+\epsilon_y, 
\ese
where the distribution of $(\epsilon_x,\epsilon_y)\trans$ is same as in Section \ref{sec:setting}, and $\rho=0$.
We report the empirical levels and boxplots of p-values for the eight estimators under this setting as Table \ref{tab:ep-level-3} and Figure \ref{fig:boxplot-null-3}, as well as the estimation results in Table \ref{tab:est-model3} in the supplement.

\begin{table}[htb]
    \centering
    \footnotesize
\begin{tabular}{rrrrrrrrrr}
\hline
$\rho$ & $n$ & RPCO & RPCS & RPCF & PaCo & RHSIC & RRIT & RCIT & RCoT\\
\hline
\hline
0 & 100 & 0.046 & 0.112 & 0.054 & 0.318 & 0.056 & 0.032 & 0.126 & 0.110\\
0 & 200 & 0.040 & 0.092 & 0.072 & 0.452 & 0.048 & 0.052 & 0.108 & 0.080\\
0 & 500 & 0.044 & 0.056 & 0.054 & 0.902 & 0.060 & 0.054 & 0.072 & 0.072\\
0 & 1000 & 0.048 & 0.110 & 0.076 & 1.000 & 0.088 & 0.072 & 0.076 & 0.064\\
0 & 2000 & 0.060 & 0.108 & 0.082 & 1.000 & 0.054 & 0.064 & 0.078 & 0.076\\
0 & 5000 & 0.044 & 0.106 & 0.060 & 1.000 & 0.060 & 0.064 & 0.170 & 0.174\\
\hline
\end{tabular}
    \caption{Empirical levels of eight tests under Model 3 based on 500 experiments.}
    \label{tab:ep-level-3}
\end{table}
\begin{figure}[htb]
    \centering
    \includegraphics[width=0.75\linewidth,page=1]{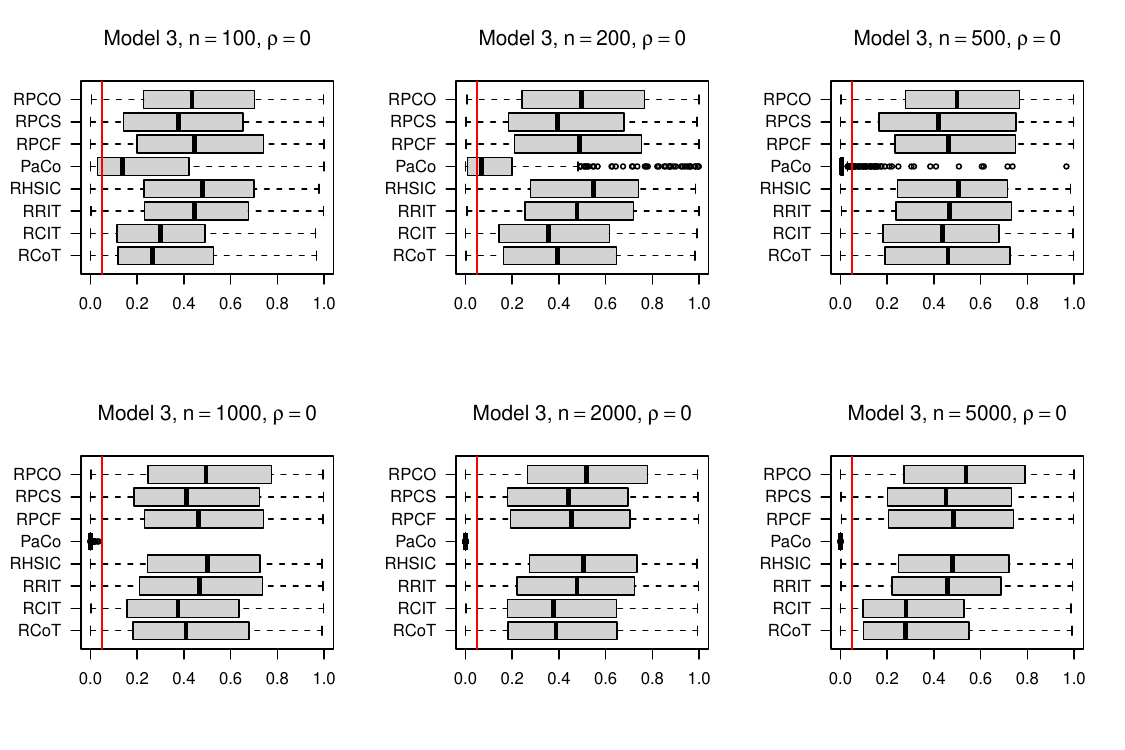}
    \caption{Boxplots of p-values of eight tests for Model 3 under the null hypothesis, with sample sizes $n=100,200,500,1000,2000,5000$. The red line represents 0.05.}
    \label{fig:boxplot-null-3}
\end{figure}

As shown in Table \ref{tab:ep-level-3} and Figure \ref{fig:boxplot-null-3}, all tests except PaCo continue to maintain reasonably accurate empirical levels under the strong nonlinear setting. In contrast, PaCo has substantially inflated Type I error rates, and most of its p-values are concentrated near 0, especially when the sample size is large. Furthermore, Table \ref{tab:est-model3} shows that PaCo also produces large biases in the estimation of $\wh\rho$. These results indicate that flexible nonparametric regression is essential in this setting. In particular, linear regression fails to adequately capture the nonlinear effects of $\Z$ on $X$ and $Y$, thereby leaving substantial residual dependence and causing PaCo to reject the null hypothesis much more frequently than the nominal level. The observed estimation bias is also consistent with Remark \ref{rem:reg-bias}, where regression bias may induce additional residual correlation even under the null hypothesis $\rho = 0$.

\section{Data Application}\label{sec:application}

We collected daily adjusted closing prices for 12 representative U.S. stocks, including Apple (AAPL), Microsoft (MSFT), NVIDIA (NVDA), Alphabet (GOOGL), Amazon (AMZN), JPMorgan Chase (JPM), Bank of America (BAC), Goldman Sachs (GS), ExxonMobil (XOM), Chevron (CVX), Walmart (WMT), and Costco (COST), together with several observed market factors, including the S\&P 500 ETF (SPY), the volatility index (VIX), long-term treasury bonds (TLT), oil prices (USO), and the U.S. dollar index (UUP), from Yahoo Finance using the R package \texttt{quantmod} \citep{quantmod} over the period from January 1, 2018 to January 1, 2024. Adjusted prices were used to account for stock splits and dividends. Daily log returns were computed as differences of the logarithms of consecutive adjusted prices. Observations containing missing values were removed to ensure complete alignment across all variables and factors. The resulting cleaned dataset was then separated into the stock return variables of interest $\X$ and the observed market factor variables $\Z$. Here, $\X$ represents the daily log returns of the 12 individual stocks, while $\Z$ contains 5 market-wide factors intended to capture common variation shared across stock returns and serves as the conditioning variables in the subsequent conditional dependence analysis. After the cleaning process, the sample size is $n=1453$.

We further conduct pairwise conditional independence tests among the 12 stocks based on the proposed residual correlation test under the additive noise model (ANM). Specifically, for each pair of stocks $(X_i, X_j)$, for $1 \le i < j \le 12$, we test whether
\bse
X_i \indep X_j | \Z,
\ese
where $\Z$ consists of the observed market factors represented by SPY, VIX, TLT, USO, and UUP. Under the ANM framework, each stock return is modeled as a nonparametric function of $\Z$ plus an independent additive noise term, and the proposed test is then applied to the residuals to determine whether any remaining conditional dependence exists after conditioning on these observed market factors. This analysis allows us to investigate the dependence structure among stocks beyond the effects explained by the common market factors.

\begin{figure}[htbp]
\centering
\begin{minipage}{0.48\textwidth}
    \centering
    \includegraphics[width=\linewidth]{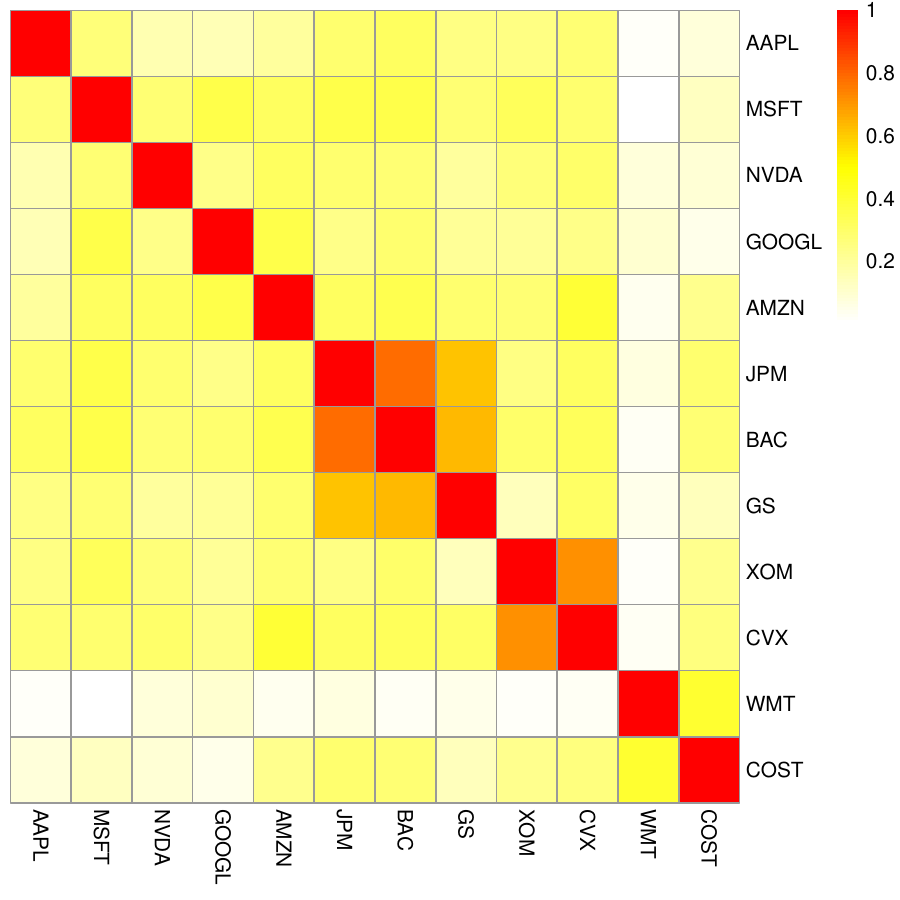}
    \caption{Heatmap of pairwise estimated Pearson correlations of residuals.}
    \label{fig:heatmap}
\end{minipage}
\hfill
\begin{minipage}{0.48\textwidth}
    \centering
    \includegraphics[width=\linewidth]{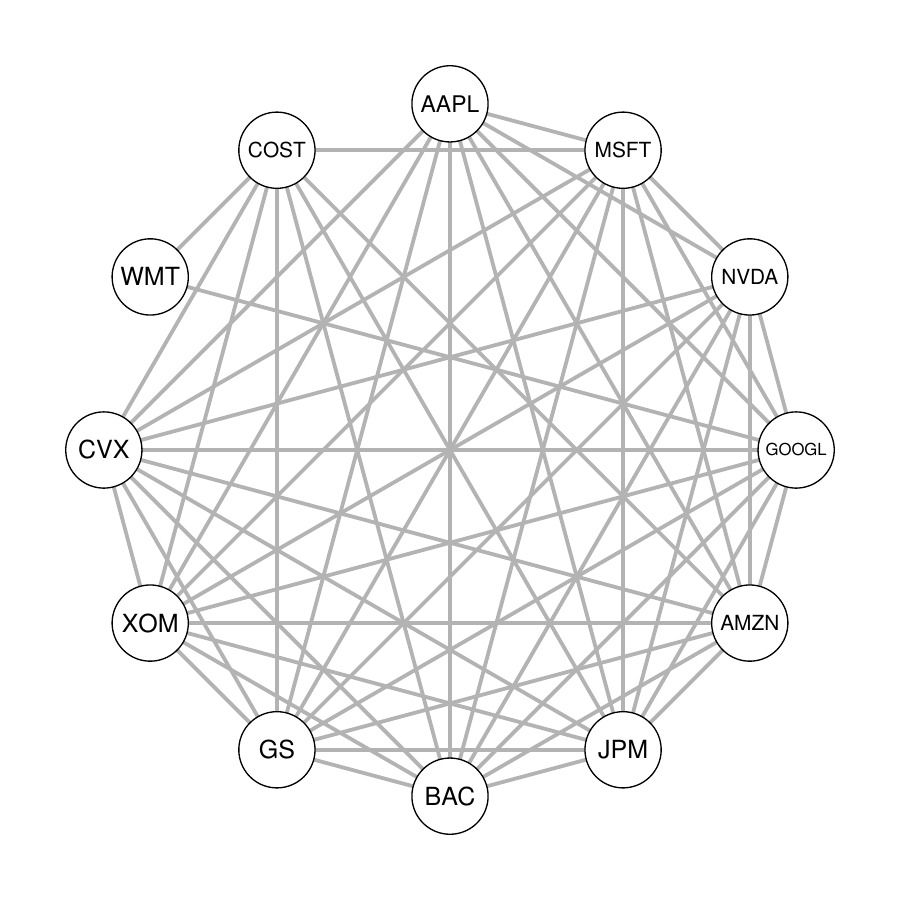}
    \caption{Estimated graph based on the conditional independence test.}
    \label{fig:graph}
\end{minipage}
\end{figure}

Figure \ref{fig:heatmap} presents the heatmap of the absolute values of estimated pairwise residual correlations after conditioning on the observed market factors $\Z$. The residual correlations are estimated by RPCS, while the nonparametric regressions are fitted using Super Learner \citep{van2007super,polley2011super,superlearner}, under the same settings as in Section \ref{sec:setting}. Larger absolute values in the heatmap indicate stronger remaining conditional dependence between the corresponding pairs of stocks after removing the common effects explained by SPY, VIX, TLT, USO, and UUP. Several sector-related dependence patterns can still be observed after conditioning on the market factors. In particular, relatively strong residual dependence appears among the financial stocks JPM, BAC, and GS, as well as between the energy stocks XOM and CVX. Although some cross-sector pairs exhibit weaker dependence after conditioning on the market factors, many stock pairs still retain noticeable residual dependence.

Figure \ref{fig:graph} further visualizes the estimated conditional dependence structure through a graph constructed from the pairwise conditional independence tests based on RPCS. In the graph, each node represents a stock, and an edge is included when the null hypothesis of conditional independence is rejected for the corresponding pair of stocks. The resulting network exhibits many connections across the stocks, indicating that substantial residual dependence remains even after conditioning on the observed market factors. These findings suggest that, although the observed market factors explain part of the common market variation, important residual relationships among individual stocks still persist.

\bibliographystyle{agsm}
\bibliography{ref}

@inproceedings{hoyer2008nonlinear,
 author = {Hoyer, Patrik and Janzing, Dominik and Mooij, Joris M and Peters, Jonas and Sch\"{o}lkopf, Bernhard},
 booktitle = {Advances in Neural Information Processing Systems},
 editor = {D. Koller and D. Schuurmans and Y. Bengio and L. Bottou},
 pages = {},
 publisher = {Curran Associates, Inc.},
 title = {Nonlinear causal discovery with additive noise models},
 volume = {21},
 year = {2008}
}

@article{zhang2017causal, 
title={Causal Discovery Using Regression-Based Conditional Independence Tests},
volume={31}, 
DOI={10.1609/aaai.v31i1.10698}, 
number={1}, 
journal={Proceedings of the AAAI Conference on Artificial Intelligence},
author={Zhang, Hao and Zhou, Shuigeng and Zhang, Kun and Guan, Jihong},
year={2017}, 
month={Feb.} 
}

@article{li2019nonparametric,
author = {Li, Chun and Fan, Xiaodan},
title = {On nonparametric conditional independence tests for continuous variables},
journal = {WIREs Computational Statistics},
volume = {12},
number = {3},
pages = {e1489},
doi = {https://doi.org/10.1002/wics.1489},
eprint = {https://wires.onlinelibrary.wiley.com/doi/pdf/10.1002/wics.1489},
year = {2020}
}

@book{anderson2003introduction,
  title={An Introduction to Multivariate Statistical Analysis},
  author={Anderson, T.W.},
  year={2003},
  publisher={John Wiley \& Suns, Inc. Huboken, New Jersey}
}

@book{lauritzen1996graphical,
  title={Graphical Models},
  author={Lauritzen, S.L.},
  series={Oxford Statistical Science Series},
  year={1996},
  publisher={Clarendon Press}
}

@book{muirhead1982aspects,
  title={Aspects of Multivariate Statistical Theory},
  author={Muirhead, R.J.},
  series={Wiley Series in Probability and Statistics},
  year={1982},
  publisher={Wiley}
}

@InProceedings{gretton2005measuring,
author="Gretton, Arthur
and Bousquet, Olivier
and Smola, Alex
and Sch{\"o}lkopf, Bernhard",
editor="Jain, Sanjay
and Simon, Hans Ulrich
and Tomita, Etsuji",
title="Measuring Statistical Dependence with Hilbert-Schmidt Norms",
booktitle="Algorithmic Learning Theory",
year="2005",
publisher="Springer Berlin Heidelberg",
address="Berlin, Heidelberg",
pages="63--77",
isbn="978-3-540-31696-1"
}

@inproceedings{gretton2007kernel,
 author = {Gretton, Arthur and Fukumizu, Kenji and Teo, Choon and Song, Le and Sch\"{o}lkopf, Bernhard and Smola, Alex},
 booktitle = {Advances in Neural Information Processing Systems},
 editor = {J. Platt and D. Koller and Y. Singer and S. Roweis},
 pages = {},
 publisher = {Curran Associates, Inc.},
 title = {A Kernel Statistical Test of Independence},
 volume = {20},
 year = {2007}
}

@article{strobl2019approximate,
title = {Approximate Kernel-Based Conditional Independence Tests for Fast Non-Parametric Causal Discovery},
author = {Eric V. Strobl and Kun Zhang and Shyam Visweswaran},
pages = {20180017},
volume = {7},
number = {1},
journal = {Journal of Causal Inference},
doi = {doi:10.1515/jci-2018-0017},
year = {2019},
lastchecked = {2026-05-24}
}

@inproceedings{zhang2012kernelbased,
author = {Zhang, Kun and Peters, Jonas and Janzing, Dominik and Sch\"{o}lkopf, Bernhard},
title = {Kernel-based conditional independence test and application in causal discovery},
year = {2011},
isbn = {9780974903972},
publisher = {AUAI Press},
address = {Arlington, Virginia, USA},
booktitle = {Proceedings of the Twenty-Seventh Conference on Uncertainty in Artificial Intelligence},
pages = {804–813},
numpages = {10},
location = {Barcelona, Spain},
series = {UAI'11}
}

@Manual{dhsic,
    title = {dHSIC: Independence Testing via Hilbert Schmidt Independence Criterion},
    author = {Niklas Pfister and Jonas Peters},
    year = {2026},
    note = {R package version 2.2},
  }

@Manual{rcit,
    title = {RCIT: The Randomized Conditional Independence Test (RCIT) and the
Randomized conditional Correlation Test (RCoT)},
    author = {Eric V. Strobl},
    year = {2026},
    note = {R package version 0.1.0, commit 7a7fb2be17ca6d8c657c750d435032824d1111a9},
    url = {https://github.com/ericstrobl/RCIT},
  }

@Manual{quantmod,
    title = {quantmod: Quantitative Financial Modelling Framework},
    author = {Jeffrey A. Ryan and Joshua M. Ulrich},
    year = {2025},
    note = {R package version 0.4.28, 
https://github.com/joshuaulrich/quantmod},
    url = {https://www.quantmod.com/},
  }

@article{peters2014causal,
  author  = {Jonas Peters and Joris M. Mooij and Dominik Janzing and Bernhard Sch{{\"o}}lkopf},
  title   = {Causal Discovery with Continuous Additive Noise Models},
  journal = {Journal of Machine Learning Research},
  year    = {2014},
  volume  = {15},
  number  = {58},
  pages   = {2009--2053}
}

@book{li2018sufficient,
  title={Sufficient Dimension Reduction: Methods and Applications with R},
  author={Li, B.},
  isbn={9781498704489},
  series={Chapman \& Hall/CRC Monographs on Statistics and Applied Probability},
  year={2018},
  publisher={CRC Press}
}

@article{ma2013review,
author = {Ma, Yanyuan and Zhu, Liping},
title = {A Review on Dimension Reduction},
journal = {International Statistical Review},
volume = {81},
number = {1},
pages = {134-150},
doi = {https://doi.org/10.1111/j.1751-5823.2012.00182.x},
year = {2013}
}

@book{koller2009probabilistic,
  title={Probabilistic Graphical Models: Principles and Techniques},
  author={Koller, D. and Friedman, N.},
  isbn={9780262258357},
  series={Adaptive Computation and Machine Learning series},
  year={2009},
  publisher={MIT Press}
}

@book{ding2024course,
      title={A First Course in Causal Inference}, 
      author={Peng Ding},
      year={2024},
      publisher={Chapman and Hall/CRC},
      doi={https://doi.org/10.1201/9781003484080}
}

@book{pearl2016causal,
  title={Causal Inference in Statistics: A Primer},
  author={Pearl, J. and Glymour, M. and Jewell, N.P.},
  isbn={9781119186847},
  lccn={2015037219},
  year={2016},
  publisher={Wiley}
}

@article{dawid1979conditional,
 ISSN = {00359246},
 author = {A. P. Dawid},
 journal = {Journal of the Royal Statistical Society. Series B (Methodological)},
 number = {1},
 pages = {1--31},
 publisher = {[Royal Statistical Society, Oxford University Press]},
 title = {Conditional Independence in Statistical Theory},
 urldate = {2026-05-26},
 volume = {41},
 year = {1979}
}

@Manual{superlearner,
    title = {SuperLearner: Super Learner Prediction},
    author = {Eric Polley and Erin LeDell and Chris Kennedy and Mark {van der Laan}},
    year = {2025},
    note = {R package version 2.0-40},
    url = {https://github.com/ecpolley/SuperLearner},
  }

@article{van2007super,
  title={Super learner},
  author={Van der Laan, Mark J and Polley, Eric C and Hubbard, Alan E},
  year={2007},
  journal = {Statistical Applications in Genetics and Molecular Biology},
  volume  = {6},
  pages   = {Article 25},
  year    = {2007},
  doi     = {10.2202/1544-6115.1309}
}

@Inbook{polley2011super,
author="Polley, Eric C.
and Rose, Sherri
and van der Laan, Mark J.",
title="Super Learning",
bookTitle="Targeted Learning: Causal Inference for Observational and Experimental Data",
year="2011",
publisher="Springer New York",
address="New York, NY",
pages="43--66",
isbn="978-1-4419-9782-1",
doi="10.1007/978-1-4419-9782-1_3"
}

@article{dempster1972covariance,
 ISSN = {0006341X, 15410420},
 author = {A. P. Dempster},
 journal = {Biometrics},
 number = {1},
 pages = {157--175},
 publisher = {International Biometric Society},
 title = {Covariance Selection},
 urldate = {2026-05-26},
 volume = {28},
 year = {1972}
}

@article{baba2004partial,
author = {Baba, Kunihiro and Shibata, Ritei and Sibuya, Masaaki},
title = {PARTIAL CORRELATION AND CONDITIONAL CORRELATION AS MEASURES OF CONDITIONAL INDEPENDENCE},
journal = {Australian \& New Zealand Journal of Statistics},
volume = {46},
number = {4},
pages = {657-664},
doi = {https://doi.org/10.1111/j.1467-842X.2004.00360.x},
year = {2004}
}

@ARTICLE{peters2011causal,
  author={Peters, Jonas and Janzing, Dominik and Scholkopf, Bernhard},
  journal={IEEE Transactions on Pattern Analysis and Machine Intelligence}, 
  title={Causal Inference on Discrete Data Using Additive Noise Models}, 
  year={2011},
  volume={33},
  number={12},
  pages={2436-2450},
  doi={10.1109/TPAMI.2011.71}}

@article{shimizu2006linear,
  author  = {Shohei Shimizu and Patrik O. Hoyer and Aapo Hyv\"arinen and Antti Kerminen},
  title   = {A Linear Non-Gaussian Acyclic Model for Causal Discovery},
  journal = {Journal of Machine Learning Research},
  year    = {2006},
  volume  = {7},
  number  = {72},
  pages   = {2003--2030}
}

@article{zhang2019measuring,
author = {Zhang, Hao and Zhou, Shuigeng and Guan, Jihong and Huan, Jun (Luke)},
title = {Measuring Conditional Independence by Independent Residuals for Causal Discovery},
year = {2019},
issue_date = {September 2019},
publisher = {Association for Computing Machinery},
address = {New York, NY, USA},
volume = {10},
number = {5},
issn = {2157-6904},
doi = {10.1145/3325708},
journal = {ACM Trans. Intell. Syst. Technol.},
month = sep,
articleno = {50},
numpages = {19}
}

@article{huang2010testing,
author = {Tzee-Ming Huang},
title = {{Testing conditional independence using maximal nonlinear conditional correlation}},
volume = {38},
journal = {The Annals of Statistics},
number = {4},
publisher = {Institute of Mathematical Statistics},
pages = {2047 -- 2091},
year = {2010}
}

@article{su2008nonparametric, 
title={A Nonparametric Hellinger Metric Test for Conditional Independence}, 
volume={24}, 
number={4}, journal={Econometric Theory}, publisher={Cambridge University Press}, author={Su, Liangjun and White, Halbert}, year={2008}, pages={829–864}}

@article{huang2016flexible,
title={A Flexible Nonparametric Test for Conditional Independence}, volume={32}, 
number={6}, journal={Econometric Theory}, publisher={Cambridge University Press}, author={Huang, Meng and Sun, Yixiao and White, Halbert}, year={2016}, pages={1434–1482}}

@inproceedings{doran2014permutation,
  title = {A Permutation-Based Kernel Conditional Independence Test},
  author = {Doran, G. and Muandet, K. and Zhang, K. and Sch{\"o}lkopf, B.},
  booktitle = {Proceedings of the 30th Conference on Uncertainty in Artificial Intelligence (UAI2014)},
  pages = {132--141},
  editors = {Nevin L. Zhang and Jin Tian},
  publisher = {AUAI Press Corvallis},
  address = {Oregon},
  year = {2014}
}

@article{wang2015conditional,
author = {Xueqin Wang and Wenliang Pan and Wenhao Hu and Yuan Tian and Heping Zhang},
title = {Conditional Distance Correlation},
journal = {Journal of the American Statistical Association},
volume = {110},
number = {512},
pages = {1726--1734},
year = {2015},
publisher = {Taylor \& Francis},
doi = {10.1080/01621459.2014.993081},

    note ={PMID: 26877569}
}

@article{cai2022distribution,
  author  = {Zhanrui Cai and Runze Li and Yaowu Zhang},
  title   = {A Distribution Free Conditional Independence Test with Applications to Causal Discovery},
  journal = {Journal of Machine Learning Research},
  year    = {2022},
  volume  = {23},
  number  = {85},
  pages   = {1--41}
}

@article{fukumizu2004dimensionality,
author = {Fukumizu, Kenji and Bach, Francis R. and Jordan, Michael I.},
title = {Dimensionality Reduction for Supervised Learning with Reproducing Kernel Hilbert Spaces},
year = {2004},
issue_date = {12/1/2004},
publisher = {JMLR.org},
volume = {5},
issn = {1532-4435},
journal = {J. Mach. Learn. Res.},
month = {dec},
pages = {73–99},
numpages = {27}
}

@inproceedings{fukumizu2007kernel,
 author = {Fukumizu, Kenji and Gretton, Arthur and Sun, Xiaohai and Sch\"{o}lkopf, Bernhard},
 booktitle = {Advances in Neural Information Processing Systems},
 editor = {J. Platt and D. Koller and Y. Singer and S. Roweis},
 pages = {},
 publisher = {Curran Associates, Inc.},
 title = {Kernel Measures of Conditional Dependence},
 volume = {20},
 year = {2007}
}

@article{sheng2023distance,
  author  = {Tianhong Sheng and Bharath K. Sriperumbudur},
  title   = {On Distance and Kernel Measures of Conditional Dependence},
  journal = {Journal of Machine Learning Research},
  year    = {2023},
  volume  = {24},
  number  = {7},
  pages   = {1--16}
}

@misc{tang2026kernel,
      title={A Kernel-Based Nonparametric Test for Conditional Independence of Functional Data}, 
      author={Yin Tang and Bing Li},
      year={2026},
      eprint={2603.13704},
      archivePrefix={arXiv},
      primaryClass={stat.ME}
}

@book{lehmann1999elements,
author="Lehmann, E. L.",
title="Elements of Large-Sample Theory",
year="1999",
publisher="Springer New York",
address="New York, NY"
}

@book{bcrw,
  title={Efficient and Adaptive Estimation for Semiparametric Models},
  author={Bickel, Peter J and Klaassen, J and Ritov, YA'Acov and Wellner, Jon A},
  year={1993},
  publisher={Johns Hopkins University Press Baltimore}
}

@book{tsiatis,
  title={Semiparametric Theory and Missing Data},
  author={Tsiatis, Anastasios A},
  year={2006},
  publisher={New York: Springer}
}

@article{chang2026testing,
author = {Jinyuan Chang and Yue Du and Jing He and Qiwei Yao},
title = {Testing Independence and Conditional Independence in High Dimensions via Coordinatewise Gaussianization},
journal = {Journal of the American Statistical Association},
volume = {0},
number = {0},
pages = {1--15},
year = {2026},
publisher = {Taylor \& Francis},
doi = {10.1080/01621459.2026.2637891},


URL = { 
    
        https://doi.org/10.1080/01621459.2026.2637891
    
    

},
eprint = { 
    
        https://doi.org/10.1080/01621459.2026.2637891
    
    

}

}

\clearpage

\pagenumbering{arabic}
\setcounter{page}{1} %%% Set page number to 1
\setcounter{equation}{0}\renewcommand{\theequation}{S.\arabic{equation}}
\setcounter{section}{0}\renewcommand{\thesection}{S.\arabic{section}}
\setcounter{subsection}{0}\renewcommand{\thesubsection}{S.\arabic{section}.\arabic{subsection}}
\setcounter{subsubsection}{0}\renewcommand{\thesubsubsection}{S.\arabic{section}.\arabic{subsection}.\arabic{subsubsection}}
\setcounter{Th}{0}\renewcommand{\theTh}{S.\arabic{Th}}
\setcounter{Lem}{0}\renewcommand{\theLem}{S.\arabic{Lem}}
\setcounter{Rem}{0}\renewcommand{\theRem}{S.\arabic{Rem}}
\setcounter{Cor}{0}\renewcommand{\theCor}{S.\arabic{Cor}}
\setcounter{table}{0}\renewcommand{\thetable}{S.\arabic{table}}
\setcounter{figure}{0}\renewcommand{\thefigure}{S.\arabic{figure}}

\section*{Supplementary Materials}

\section{Proofs}

\begin{proof}[Proof of Proposition \ref{prop:space}]
Note that 
\bse
S_{\sigma_x}(x,y,\z)=-\frac{1}{\sigma_x}+\frac{1}{1-\rho^2}\frac{\epsilon_x^2}{\sigma_x^3}-\frac{\rho}{1-\rho^2}\frac{\epsilon_x\epsilon_y}{\sigma_x^2\sigma_y}.
\ese
Thus, the nuisance tangent space for $\sigma_x$ is
\bse
\Lambda_1=\{c_1S_{\sigma_x}(x,y,\z)\}=\left\{c_1\left(-\frac{1}{\sigma_x}+\frac{1}{1-\rho^2}\frac{\epsilon_x^2}{\sigma_x^3}-\frac{\rho}{1-\rho^2}\frac{\epsilon_x\epsilon_y}{\sigma_x^2\sigma_y}\right)\right\}.
\ese
Similarly, the nuisance tangent space for $\sigma_y$ is
\bse
S_{\sigma_y}(x,y,\z)=-\frac{1}{\sigma_y}+\frac{1}{1-\rho^2}\frac{\epsilon_y^2}{\sigma_y^3}-\frac{\rho}{1-\rho^2}\frac{\epsilon_x\epsilon_y}{\sigma_x\sigma_y^2}.
\ese
And
\bse
\Lambda_2=\{c_2S_{\sigma_y}(x,y,\z)\}=\left\{c_2\left(-\frac{1}{\sigma_y}+\frac{1}{1-\rho^2}\frac{\epsilon_y^2}{\sigma_y^3}-\frac{\rho}{1-\rho^2}\frac{\epsilon_x\epsilon_y}{\sigma_x\sigma_y^2}\right)\right\}.
\ese

Then, let $\Lambda_3$ be the nuisance tangent space for $m_x$. Take $m_x(\z)=m_0(\z)+\gamma_xB_1(\z)$ as a parametric submodel of $m_x(\z)$. Thus,
\bse
S_{\gamma_x}|_{\gamma_x=0}
&=&-\frac{1}{2(1-\rho^2)}\left[\frac{2\epsilon_x\{-B_1(\z)\}}{\sigma_x^2}-2\rho\frac{\epsilon_y\{-B_1(\z)\}}{\sigma_x\sigma_y}\right]\\
&=&\frac{1}{1-\rho^2}\left(\frac{\epsilon_x}{\sigma_x^2}-\rho\frac{\epsilon_y}{\sigma_x\sigma_y}\right)B_1(\z).
\ese
Denote 
\bse
A=\left\{\frac{1}{1-\rho^2}\left(\frac{\epsilon_x}{\sigma_x^2}-\rho\frac{\epsilon_y}{\sigma_x\sigma_y}\right)B_1(\z)\right\}.
\ese
Note that $m_x(\z)=m_0(\z)+\gamma_xB_1(\z)$ is a parametric submodel of $m_x(\z)$, so $A\subset \Lambda_3$. On the other hand, for any parametric submodel $m_x(\z,\gamma)$ with $\gamma=\gamma_0$ leading to the truth, we have
\bse
\frac{\partial \log f}{\partial \gamma}\Big|_{\gamma=\gamma_0} &=& -\frac{1}{2(1-\rho^2)}\left[\frac{2\epsilon_x}{\sigma_x^2}\left\{-\frac{\partial m_x(\z,\gamma)}{\partial\gamma}\Big|_{\gamma=\gamma_0}\right\}-2\rho \frac{\epsilon_y}{\sigma_x\sigma_y}\left\{-\frac{\partial m_x(\z,\gamma)}{\partial\gamma}\Big|_{\gamma=\gamma_0}\right\}\right]\\
&=& \frac{1}{1-\rho^2}\left(\frac{\epsilon_x}{\sigma_x^2}-\rho \frac{\epsilon_y}{\sigma_x\sigma_y}\right)\left\{\frac{\partial m_x(\z,\gamma)}{\partial\gamma}\Big|_{\gamma=\gamma_0}\right\} \in A,
\ese
so $\Lambda_3\subset A$. Since both $\Lambda_3$ and $A$ are closed, we have $\Lambda_3=A$, i.e.,
\bse
\Lambda_3=\left\{\frac{1}{1-\rho^2}\left(\frac{\epsilon_x}{\sigma_x^2}-\rho\frac{\epsilon_y}{\sigma_x\sigma_y}\right)B_1(\z)\right\}.
\ese
Similarly, the nuisance tangent space for $m_y$ is
\bse
\Lambda_4=\left\{\frac{1}{1-\rho^2}\left(\frac{\epsilon_y}{\sigma_y^2}-\rho\frac{\epsilon_x}{\sigma_x\sigma_y}\right)B_2(\z)\right\}.
\ese
Obviously, the nuisance tangent space for $f_\Z$ is
\bse
\Lambda_5=\{a(\z):E(a)=0\}.
\ese
Summarizing the above results gives 
\bse
\Lambda = \Lambda_1+ \Lambda_2+ \Lambda_3+ \Lambda_4+ \Lambda_5.
\ese
\end{proof}

\begin{proof}[Proof of Proposition \ref{prop:ortho-comp}]
We then check the orthogonality of the five spaces above. For any $B_1(\z)$, we have
\bse
&&E\left\{\left(-\frac{1}{\sigma_x}+\frac{1}{1-\rho^2}\frac{\epsilon_x^2}{\sigma_x^3}-\frac{\rho}{1-\rho^2}\frac{\epsilon_x\epsilon_y}{\sigma_x^2\sigma_y}\right)\frac{1}{1-\rho^2}\left(\frac{\epsilon_x}{\sigma_x^2}-\frac{\epsilon_y}{\sigma_x\sigma_y}\right)B_1(\Z)\right\}\\
&=&E\left\{\left(-\frac{1}{\sigma_x}+\frac{1}{1-\rho^2}\frac{\epsilon_x^2}{\sigma_x^3}-\frac{\rho}{1-\rho^2}\frac{\epsilon_x\epsilon_y}{\sigma_x^2\sigma_y}\right)\frac{1}{1-\rho^2}\left(\frac{\epsilon_x}{\sigma_x^2}-\frac{\epsilon_y}{\sigma_x\sigma_y}\right)\right\}E\left\{B_1(\Z)\right\}\\
&=& 0,
\ese
since this only includes the first and third moments of a multivariate normal distribution. Thus, $\Lambda_1\perp\Lambda_3$. Similarly, $\Lambda_1\perp\Lambda_4$, $\Lambda_2\perp\Lambda_3$, $\Lambda_2\perp\Lambda_4$.

For $a(\z)\in\Lambda_5$, we have
\bse
&&E\left\{\left(-\frac{1}{\sigma_x}+\frac{1}{1-\rho^2}\frac{\epsilon_x^2}{\sigma_x^3}-\frac{\rho}{1-\rho^2}\frac{\epsilon_x\epsilon_y}{\sigma_x^2\sigma_y}\right)a(\Z)\right\}\\
&=&E\left(-\frac{1}{\sigma_x}+\frac{1}{1-\rho^2}\frac{\epsilon_x^2}{\sigma_x^3}-\frac{\rho}{1-\rho^2}\frac{\epsilon_x\epsilon_y}{\sigma_x^2\sigma_y}\right)E\left\{a(\Z)\right\}\\
&=& 0.
\ese
Thus, $\Lambda_1\perp\Lambda_5$. Similarly, $\Lambda_2\perp\Lambda_5$.

For $a(\z)\in\Lambda_5$ and any $B_1(\z)$, 
\bse
E\left[\frac{1}{1-\rho^2}\left\{\frac{\epsilon_x}{\sigma_x^2}-\rho\frac{\epsilon_y}{\sigma_x\sigma_y}\right\}B_1(\Z)a(\Z)\right]
=\frac{1}{1-\rho^2}E\left\{\frac{\epsilon_x}{\sigma_x^2}-\rho\frac{\epsilon_y}{\sigma_x\sigma_y}\right\}E\left[B_1(\Z)a(\Z)\right]
= 0.
\ese
Thus, $\Lambda_3\perp\Lambda_5$. Similarly, $\Lambda_4\perp\Lambda_5$.

However, 
\bse
&&E(S_{\sigma_y}S_{\sigma_x})\\
&=&E\left\{\left(-\frac{1}{\sigma_x}+\frac{1}{1-\rho^2}\frac{\epsilon_x^2}{\sigma_x^3}-\frac{\rho}{1-\rho^2}\frac{\epsilon_x\epsilon_y}{\sigma_x^2\sigma_y}\right)\left(-\frac{1}{\sigma_y}+\frac{1}{1-\rho^2}\frac{\epsilon_y^2}{\sigma_y^3}-\frac{\rho}{1-\rho^2}\frac{\epsilon_x\epsilon_y}{\sigma_x\sigma_y^2}\right)\right\}\\
&=& \frac{1}{\sigma_x\sigma_y}\left\{1-\frac{1}{1-\rho^2}+\frac{\rho^2}{1-\rho^2}-\frac{1}{1-\rho^2}+\frac{1+2\rho^2}{(1-\rho^2)^2}-\frac{3\rho^2}{(1-\rho^2)^2}+\frac{\rho^2}{1-\rho^2} \right. \\
&& \left. \qquad \quad -\frac{3\rho^2}{(1-\rho^2)^2}+\frac{\rho^2(1+2\rho^2)}{(1-\rho^2)^2}\right\}\\
&=&-\frac{1}{\sigma_x\sigma_y} \frac{\rho^2}{1-\rho^2} \\
&\neq& 0,
\ese
so $\Lambda_1\not\perp\Lambda_2$. Also,
\bse
&&E\left\{\frac{1}{1-\rho^2}\left(\frac{\epsilon_x}{\sigma_x^2}-\rho\frac{\epsilon_y}{\sigma_x\sigma_y}\right)B_1(\Z)\frac{1}{1-\rho^2}\left(\frac{\epsilon_y}{\sigma_y^2}-\rho\frac{\epsilon_x}{\sigma_x\sigma_y}\right)B_2(\Z)\right\}\\
&=&\frac{1}{(1-\rho^2)^2}E\left\{\left(\frac{\epsilon_x}{\sigma_x^2}-\rho\frac{\epsilon_y}{\sigma_x\sigma_y}\right)\left(\frac{\epsilon_y}{\sigma_y^2}-\rho\frac{\epsilon_x}{\sigma_x\sigma_y}\right)\right\}E\left\{B_1(\Z)B_2(\Z)\right\}\\
&=&\frac{1}{(1-\rho^2)^2}\frac{1}{\sigma_x\sigma_y}\left(\rho-\rho-\rho+\rho^3\right)E\left\{B_1(\Z)B_2(\Z)\right\}\\
&=&-\frac{\rho}{1-\rho^2}\frac{1}{\sigma_x\sigma_y}E\left\{B_1(\Z)B_2(\Z)\right\}\\
&\neq& 0,
\ese
so $\Lambda_3\not\perp\Lambda_4$.

We now do orthogonalization on the above two pairs. First, we find $c$ such that 
\bse
E\left\{(S_{\sigma_y}-cS_{\sigma_x})S_{\sigma_x}\right\}=0.
\ese
Notice that 
\bse
E\left(S_{\sigma_x}^2\right)
&=&E\left\{\left(-\frac{1}{\sigma_x}+\frac{1}{1-\rho^2}\frac{\epsilon_x^2}{\sigma_x^3}-\frac{\rho}{1-\rho^2}\frac{\epsilon_x\epsilon_y}{\sigma_x^2\sigma_y}\right)^2\right\}\\
&=&\frac{1}{\sigma_x^2}\left\{1+\frac{3}{(1-\rho^2)^2}+\frac{\rho^2(1+2\rho^2)}{(1-\rho^2)^2}-\frac{2}{1-\rho^2}+\frac{2\rho^2}{1-\rho^2}-\frac{6\rho^2}{(1-\rho^2)^2}\right\}\\
&=&\frac{1}{\sigma_x^2}\frac{2-\rho^2}{1-\rho^2}.
\ese
Therefore,
\bse
c=\frac{E(S_{\sigma_y}S_{\sigma_x})}{E(S_{\sigma_x}^2)}=\frac{-\frac{1}{\sigma_x\sigma_y} \frac{\rho^2}{1-\rho^2}}{\frac{1}{\sigma_x^2}\frac{2-\rho^2}{1-\rho^2}}=-\frac{\sigma_x}{\sigma_y}\frac{\rho^2}{2-\rho^2}.
\ese
We set 
\bse
\wt\Lambda_2
&=&\left\{c_2\left(-\frac{1}{\sigma_y}+\frac{1}{1-\rho^2}\frac{\epsilon_y^2}{\sigma_y^3}-\frac{\rho}{1-\rho^2}\frac{\epsilon_x\epsilon_y}{\sigma_x\sigma_y^2}\right) \right. \\
&& \left. \quad +c_2\frac{\sigma_x}{\sigma_y}\frac{\rho^2}{2-\rho^2}\left(-\frac{1}{\sigma_x}+\frac{1}{1-\rho^2}\frac{\epsilon_x^2}{\sigma_x^3}-\frac{\rho}{1-\rho^2}\frac{\epsilon_x\epsilon_y}{\sigma_x^2\sigma_y}\right)\right\}\\
&=&\left\{c_2\left\{-\frac{2}{2-\rho^2}\frac{1}{\sigma_y}+\frac{1}{1-\rho^2}\frac{\epsilon_y^2}{\sigma_y^3}+\frac{\rho^2}{(1-\rho^2)(2-\rho^2)}\frac{\epsilon_x^2}{\sigma_x^2\sigma_y}-\frac{2\rho}{(1-\rho^2)(2-\rho^2)}\frac{\epsilon_x\epsilon_y}{\sigma_x\sigma_y^2}\right\}\right\}.
\ese

Then, for an arbitrary $B_2(\z)$, we want to find $B(\z)$ such that 
\bse
E\left[\left\{\left(\frac{\epsilon_y}{\sigma_y^2}-\rho\frac{\epsilon_x}{\sigma_x\sigma_y}\right)B_2(\Z)-\left(\frac{\epsilon_x}{\sigma_x^2}-\rho\frac{\epsilon_y}{\sigma_x\sigma_y}\right)B(\Z)\right\}\left\{\left(\frac{\epsilon_x}{\sigma_x^2}-\rho\frac{\epsilon_y}{\sigma_x\sigma_y}\right)B_1(\Z)\right\}\right]=0
\ese
for all $B_1(\z)$. Since
\bse
&&E\left[\left\{\left(\frac{\epsilon_y}{\sigma_y^2}-\rho\frac{\epsilon_x}{\sigma_x\sigma_y}\right)B_2(\Z)-\left(\frac{\epsilon_x}{\sigma_x^2}-\rho\frac{\epsilon_y}{\sigma_x\sigma_y}\right)B(\Z)\right\}\left\{\left(\frac{\epsilon_x}{\sigma_x^2}-\rho\frac{\epsilon_y}{\sigma_x\sigma_y}\right)B_1(\Z)\right\}\right]\\
&=&E\left[\left\{\left(\frac{\epsilon_y}{\sigma_y^2}-\rho\frac{\epsilon_x}{\sigma_x\sigma_y}\right)\left(\frac{\epsilon_x}{\sigma_x^2}-\rho\frac{\epsilon_y}{\sigma_x\sigma_y}\right)\right\}B_2(\Z)B_1(\Z)-\left(\frac{\epsilon_x}{\sigma_x^2}-\rho\frac{\epsilon_y}{\sigma_x\sigma_y}\right)^2B(\Z)B_1(\Z)\right]\\
&=& E\left\{\frac{1}{\sigma_x\sigma_y}(\rho-\rho-\rho+\rho^3)B_2(\Z)B_1(\Z)-\frac{1}{\sigma_x^2}(1+\rho^2-2\rho^2)B(\Z)B_1(\Z)\right\}\\
&=& E\left[\left\{\frac{1}{\sigma_x\sigma_y}(\rho^3-\rho)B_2(\Z)-\frac{1}{\sigma_x^2}(1-\rho^2)B(\Z)\right\}B_1(\Z)\right],
\ese
then we take
\bse
B(\z)=-\frac{\sigma_x}{\sigma_y}\rho B_2(\z).
\ese
We set 
\bse
\wt\Lambda_4 &=& \left\{\frac{1}{1-\rho^2}\left(\frac{\epsilon_y}{\sigma_y^2}-\rho\frac{\epsilon_x}{\sigma_x\sigma_y}\right)B_2(\z)+\frac{1}{1-\rho^2}\left(\frac{\epsilon_x}{\sigma_x^2}-\rho\frac{\epsilon_y}{\sigma_x\sigma_y}\right)\frac{\sigma_x}{\sigma_y}\rho B_2(\z)\right\}\\
&=& \left\{\frac{\epsilon_y}{\sigma_y^2} B_2(\z)\right\}.
\ese

Therefore,
\be\label{eq:lambda-ortho}
\Lambda=\Lambda_1\oplus\wt\Lambda_2\oplus\Lambda_3\oplus\wt\Lambda_4\oplus\Lambda_5,
\ee
where
\bse
\Lambda_1 &=& \left\{c_1\left(-\frac{1}{\sigma_x}+\frac{1}{1-\rho^2}\frac{\epsilon_x^2}{\sigma_x^3}-\frac{\rho}{1-\rho^2}\frac{\epsilon_x\epsilon_y}{\sigma_x^2\sigma_y}\right)\right\},\\
\wt\Lambda_2 &=& \left\{c_2\left\{-\frac{2}{2-\rho^2}\frac{1}{\sigma_y}+\frac{1}{1-\rho^2}\frac{\epsilon_y^2}{\sigma_y^3}+\frac{\rho^2}{(1-\rho^2)(2-\rho^2)}\frac{\epsilon_x^2}{\sigma_x^2\sigma_y}-\frac{2\rho}{(1-\rho^2)(2-\rho^2)}\frac{\epsilon_x\epsilon_y}{\sigma_x\sigma_y^2}\right\}\right\},\\
\Lambda_3 &=& \left\{\frac{1}{1-\rho^2}\left(\frac{\epsilon_x}{\sigma_x^2}-\rho\frac{\epsilon_y}{\sigma_x\sigma_y}\right)B_1(\z)\right\},\\
\wt\Lambda_4 &=& \left\{\frac{\epsilon_y}{\sigma_y^2} B_2(\z)\right\},\\
\Lambda_5 &=& \{a(\z):E(a)=0\}.
\ese

By \eqref{eq:lambda-ortho}, we know that 
\bse
\Lambda^\perp = \Lambda_1^\perp\cap\wt\Lambda_2^\perp\cap\Lambda_3^\perp\cap\wt\Lambda_4^\perp\cap\Lambda_5^\perp.
\ese

Denote $\wt\epsilon_x=\epsilon_x/\sigma_x$ and $\wt\epsilon_y=\epsilon_y/\sigma_y$. Then,
\bse
\Lambda_1^\perp &=& \left\{b(x,y,\z):E\left\{\left(-\frac{1}{\sigma_x}+\frac{1}{1-\rho^2}\frac{\epsilon_x^2}{\sigma_x^3}-\frac{\rho}{1-\rho^2}\frac{\epsilon_x\epsilon_y}{\sigma_x^2\sigma_y}\right)b(X,Y,\Z)\right\}=0\right\}\\
&=& \left\{b(x,y,\z):E\left[\left\{-(1-\rho^2)+\wt\epsilon_x^2-\rho\wt\epsilon_x\wt\epsilon_y\right\}b(X,Y,\Z)\right]=0\right\},
\ese
and
\bse
\wt\Lambda_2^\perp &=& \left\{b(x,y,\z):E\left[\left\{-\frac{2}{2-\rho^2}\frac{1}{\sigma_y}+\frac{1}{1-\rho^2}\frac{\epsilon_y^2}{\sigma_y^3}+\frac{\rho^2}{(1-\rho^2)(2-\rho^2)}\frac{\epsilon_x^2}{\sigma_x^2\sigma_y}\right.\right.\right.\\
&&\qquad\qquad\qquad\qquad\left.\left.\left.-\frac{2\rho}{(1-\rho^2)(2-\rho^2)}\frac{\epsilon_x\epsilon_y}{\sigma_x\sigma_y^2}\right\}b(X,Y,\Z)\right]=0\right\}\\
&=& \left\{b(x,y,\z):E\left[\left\{-2(1-\rho^2)+(2-\rho^2)\wt\epsilon_y^2+\rho^2\wt\epsilon_x^2-2\rho\wt\epsilon_x\wt\epsilon_y\right\}b(X,Y,\Z)\right]=0\right\}.
\ese
Also,
\bse
\Lambda_3^\perp &=& \left\{b(x,y,\z):E\left\{\frac{1}{1-\rho^2}\left(\frac{\epsilon_x}{\sigma_x^2}-\rho\frac{\epsilon_y}{\sigma_x\sigma_y}\right)B_1(\Z)b(X,Y,\Z)\right\}=0,\ \forall B_1(\z)\right\}\\
&=& \left\{b(x,y,\z):E\left\{\frac{1}{1-\rho^2}\left(\frac{\epsilon_x}{\sigma_x^2}-\rho\frac{\epsilon_y}{\sigma_x\sigma_y}\right)b(X,Y,\Z)\Big|\Z\right\}=0\right\}\\
&=& \left\{b(x,y,\z):E\left\{\left(\wt\epsilon_x-\rho\wt\epsilon_y\right)b(X,Y,\Z)\Big|\Z\right\}=0\right\}
\ese
and
\bse
\wt\Lambda_4^\perp &=& \left\{b(x,y,\z):E\left\{\frac{\epsilon_y}{\sigma_y^2} B_2(Z)b(X,Y,\Z)\right\}=0,\ \forall B_2(\z)\right\}\\
&=& \left\{b(x,y,\z):E\left\{\wt\epsilon_y b(X,Y,\Z)\Big|\Z\right\}=0\right\}.
\ese
Also obviously,
\bse
\Lambda_5^\perp=\left\{b(x,y,\z):E\left\{b(X,Y,\Z)|\Z\right\}=0\right\}.
\ese

Therefore,
\bse
\Lambda^\perp &=& \left\{b(x,y,\z): E\left[\left\{-(1-\rho^2)+\wt\epsilon_x^2-\rho\wt\epsilon_x\wt\epsilon_y\right\}b(X,Y,\Z)\right]=0,\right.\\
&& \left.E\left[\left\{-2(1-\rho^2)+(2-\rho^2)\wt\epsilon_y^2+\rho^2\wt\epsilon_x^2-2\rho\wt\epsilon_x\wt\epsilon_y\right\}b(X,Y,\Z)\right]=0,\right.\\
&& \left.E\left\{\left(\wt\epsilon_x-\rho\wt\epsilon_y\right)b(X,Y,\Z)\Big|Z\right\}=0, E\left\{\wt\epsilon_y b(X,Y,\Z)\Big|\Z\right\}=0,E\left\{b(X,Y,\Z)|\Z\right\}=0\right\}\\
&=& \left\{b(x,y,\z): E\left\{\left(\wt\epsilon_x^2-\rho\wt\epsilon_x\wt\epsilon_y\right)b(X,Y,\Z)\right\}=0,E\left\{\left(\wt\epsilon_y^2-\rho\wt\epsilon_x\wt\epsilon_y\right)b(X,Y,\Z)\right\}=0,\right.\\
&& \left.E\left\{\wt\epsilon_x b(X,Y,\Z)\Big|\Z\right\}=0, E\left\{\wt\epsilon_y b(X,Y,\Z)\Big|\Z\right\}=0,E\left\{b(X,Y,\Z)|\Z\right\}=0\right\}.
\ese
\end{proof}

\begin{proof}[Proof of Proposition \ref{prop:eif}]
Note that
\bse
S_\rho 
&=& -\frac{1}{2}\frac{-2\rho}{1-\rho^2}+\frac{-2\rho}{2(1-\rho^2)^2}\left(\frac{\epsilon_x^2}{\sigma_x^2}-2\rho\frac{\epsilon_x\epsilon_y}{\sigma_x\sigma_y}+\frac{\epsilon_y^2}{\sigma_y^2}\right)-\frac{1}{2(1-\rho^2)}\left(-2\frac{\epsilon_x\epsilon_y}{\sigma_x\sigma_y}\right)\\
&=& \frac{\rho}{1-\rho^2}-\frac{\rho}{(1-\rho^2)^2}\left(\wt\epsilon_x^2-2\rho\wt\epsilon_x\wt\epsilon_y+\wt\epsilon_y^2\right)+\frac{1}{1-\rho^2}\wt\epsilon_x\wt\epsilon_y.
\ese

Thus,
\bse
E\left(S_\rho | \Z\right)
&=& E\left\{\frac{\rho}{1-\rho^2}-\frac{\rho}{(1-\rho^2)^2}\left(\wt\epsilon_x^2-2\rho\wt\epsilon_x\wt\epsilon_y+\wt\epsilon_y^2\right)+\frac{1}{1-\rho^2}\wt\epsilon_x\wt\epsilon_y\right\}\\
&=& \frac{\rho}{1-\rho^2}-\frac{\rho}{(1-\rho^2)^2}\left(1-2\rho^2+1\right)+\frac{1}{1-\rho^2}\rho\\
&=& \frac{\rho}{1-\rho^2}-\frac{2\rho}{1-\rho^2}+\frac{1}{1-\rho^2}\rho\\
&=& 0.
\ese
Also,
\bse
E\left(\wt\epsilon_x S_\rho|\Z\right)
= E\left\{\frac{\rho}{1-\rho^2}\wt\epsilon_x -\frac{\rho}{(1-\rho^2)^2}\left(\wt\epsilon_x^3-2\rho\wt\epsilon_x^2\wt\epsilon_y+\wt\epsilon_x \wt\epsilon_y^2\right)+\frac{1}{1-\rho^2}\wt\epsilon_x^2\wt\epsilon_y\right\}
= 0
\ese
since it only uses the first and third moments of $\epsilon_x$ and $\epsilon_y$. Similarly,
\bse
E\left(\wt\epsilon_y S_\rho|\Z\right)
= E\left\{\frac{\rho}{1-\rho^2}\wt\epsilon_y -\frac{\rho}{(1-\rho^2)^2}\left(\wt\epsilon_x^2\wt\epsilon_y-2\rho\wt\epsilon_x\wt\epsilon_y^2+\wt\epsilon_y^3\right)+\frac{1}{1-\rho^2}\wt\epsilon_x\wt\epsilon_y^2\right\}
= 0.
\ese

Then,
\bse
&&E\left\{\left(\wt\epsilon_x^2-\rho\wt\epsilon_x\wt\epsilon_y\right)S_\rho\right\}\\
&=& E\left[\left(\wt\epsilon_x^2-\rho\wt\epsilon_x\wt\epsilon_y\right) \left\{\frac{\rho}{1-\rho^2}-\frac{\rho}{(1-\rho^2)^2}\left(\wt\epsilon_x^2-2\rho\wt\epsilon_x\wt\epsilon_y+\wt\epsilon_y^2\right)+\frac{1}{1-\rho^2}\wt\epsilon_x\wt\epsilon_y\right\}\right]\\
&=& \frac{\rho}{1-\rho^2}\left(1-\rho^2\right)-\frac{\rho}{(1-\rho^2)^2}\left\{3-6\rho^2+1+2\rho^2-3\rho^2+2\rho^2(1+2\rho^2)-3\rho^2\right\}\\
&&+\frac{1}{1-\rho^2}\left\{3\rho-\rho(1+2\rho^2)\right\}\\
&=& \rho-\frac{\rho}{(1-\rho^2)^2}\left(4-8\rho^2+4\rho^4\right)+2\rho\\
&=& -\rho,
\ese
and by symmetry, 
\bse
E\left\{\left(\wt\epsilon_y^2-\rho\wt\epsilon_x\wt\epsilon_y\right)S_\rho\right\}= -\rho.
\ese

Let
\bse
\wt S_\rho = S_\rho + \rho \frac{\wt\epsilon_x^2-2\rho\wt\epsilon_x\wt\epsilon_y+\wt\epsilon_y^2-2(1-\rho^2)}{2(1-\rho^2)^2}
\ese
Note that 
\bse
E\left\{\frac{\wt\epsilon_x^2-2\rho\wt\epsilon_x\wt\epsilon_y+\wt\epsilon_y^2-2(1-\rho^2)}{2(1-\rho^2)^2}\Big|\Z\right\}
= \frac{1-2\rho^2+1-2(1-\rho^2)}{2(1-\rho^2)^2}
= 0,
\ese
and 
\bse
E\left\{\wt\epsilon_x\frac{\wt\epsilon_x^2-2\rho\wt\epsilon_x\wt\epsilon_y+\wt\epsilon_y^2-2(1-\rho^2)}{2(1-\rho^2)^2}\Big|\Z\right\}=E\left\{\wt\epsilon_y\frac{\wt\epsilon_x^2-2\rho\wt\epsilon_x\wt\epsilon_y+\wt\epsilon_y^2-2(1-\rho^2)}{2(1-\rho^2)^2}\Big|\Z\right\}=0
\ese
since they both only use the first and third moments of a multivariate normal distribution. Thus, we have
\bse
E(\wt S_\rho|\Z) = E(\wt\epsilon_x\wt S_\rho|\Z) = E(\wt\epsilon_y\wt S_\rho|\Z) =0.
\ese
Furthermore, 
\bse
&&E\left[\left(\wt\epsilon_x^2-\rho\wt\epsilon_x\wt\epsilon_y\right)\left\{\frac{\wt\epsilon_x^2-2\rho\wt\epsilon_x\wt\epsilon_y+\wt\epsilon_y^2-2(1-\rho^2)}{2(1-\rho^2)^2}\right\}\right]\\
&=&E\left[\frac{1}{2(1-\rho^2)^2}\left\{3-6\rho^2+1+2\rho^2-3\rho^2+2\rho^2(1+2\rho^2)-3\rho^2-2(1-\rho^2)^2\right\}\right]\\
&=&E\left[\frac{1}{2(1-\rho^2)^2}\left\{4-8\rho^2+4\rho^4-2(1-\rho^2)^2\right\}\right]\\
&=&E\left[\frac{1}{2(1-\rho^2)^2}\left\{4(1-\rho^2)^2-2(1-\rho^2)^2\right\}\right]\\
&=& 1.
\ese
Thus, we have
\bse
E\left\{\left(\wt\epsilon_x^2-\rho\wt\epsilon_x\wt\epsilon_y\right)\wt S_\rho\right\}= -\rho + \rho =0.
\ese
By symmetry, 
\bse
E\left\{\left(\wt\epsilon_x^2-\rho\wt\epsilon_x\wt\epsilon_y\right)\wt S_\rho\right\}= -\rho + \rho =0.
\ese

Therefore, $\wt S_\rho\in\Lambda^\perp$. Also, notice that 
\bse
E\left\{\frac{\wt\epsilon_x^2-2\rho\wt\epsilon_x\wt\epsilon_y+\wt\epsilon_y^2-2(1-\rho^2)}{2(1-\rho^2)^2}\right\}
=\frac{1-2\rho^2+1-2(1-\rho^2)}{2(1-\rho^2)^2}
= 0,
\ese
so 
\bse
\frac{\wt\epsilon_x^2-2\rho\wt\epsilon_x\wt\epsilon_y+\wt\epsilon_y^2-2(1-\rho^2)}{2(1-\rho^2)^2}\in\Lambda_5\subset\Lambda.
\ese
Therefore, $\wt S_\rho = \Pi (S_\rho|\Lambda^\perp)$. Thus,
\bse
S\eff
= \wt S_\rho 
&=& \frac{\rho}{1-\rho^2}-\frac{\rho}{(1-\rho^2)^2}\left(\wt\epsilon_x^2-2\rho\wt\epsilon_x\wt\epsilon_y+\wt\epsilon_y^2\right)+\frac{1}{1-\rho^2}\wt\epsilon_x\wt\epsilon_y \\
&& + \rho \frac{\wt\epsilon_x^2-2\rho\wt\epsilon_x\wt\epsilon_y+\wt\epsilon_y^2-2(1-\rho^2)}{2(1-\rho^2)^2}\\
&=& -\frac{\rho}{2(1-\rho^2)^2}\left(\wt\epsilon_x^2-2\rho\wt\epsilon_x\wt\epsilon_y+\wt\epsilon_y^2\right)+\frac{1}{1-\rho^2}\wt\epsilon_x\wt\epsilon_y \\
&=& -\frac{1}{2(1-\rho^2)^2}\left(\rho\wt\epsilon_x^2-2\wt\epsilon_x\wt\epsilon_y+\rho\wt\epsilon_y^2\right).
\ese

Based on the form of $S\eff$, we have 
\bse
E(S\eff^2) 
&=& \frac{1}{4(1-\rho^2)^4} E\left\{\left(\rho\wt\epsilon_x^2-2\wt\epsilon_x\wt\epsilon_y+\rho\wt\epsilon_y^2\right)^2\right\}\\
&=& \frac{1}{4(1-\rho^2)^4} \left\{3\rho^2 + 2\rho^2(1+2\rho^2) +3\rho^2 - 12\rho^2 +2\rho^2(1+2\rho^2) -12\rho^2\right\}\\
&=& \frac{1}{4(1-\rho^2)^4} \left\{4-8\rho^2+4\rho^4\right\}\\
&=& \frac{1}{4(1-\rho^2)^4} \left\{4(1-\rho^2)^2\right\}\\
&=& \frac{1}{(1-\rho^2)^2}.
\ese
Thus, the semiparametric efficiency bound is
\bse
E(S\eff^2)^{-1}=(1-\rho^2)^2.
\ese
The efficient influence function is
\bse
\phi\eff(x,y,\z)=[E(S\eff^2)]^{-1}S\eff
=-\frac{1}{2}\left(\rho\wt\epsilon_x^2-2\wt\epsilon_x\wt\epsilon_y+\rho\wt\epsilon_y^2\right).
\ese
\end{proof}

\begin{proof}[Proof of Theorem \ref{thm:rho2-if}]
By Taylor's mean value theorem, we have
\be\label{eq:rho2-decomp}
n_2^{1/2}(\wh\rho_2 - \rho) 
&=& n_2^{1/2} \frac{1}{\sigma_x\sigma_y}(\wh\sigma_{xy2}-\sigma_{xy})
- n_2^{1/2} \frac{\sigma_{xy}}{2\sigma_x^3\sigma_y}(\wh\sigma_{x2}^2-\sigma_x^2)\n\\
&&- n_2^{1/2} \frac{\sigma_{xy}}{2\sigma_x\sigma_y^3}(\wh\sigma_{y2}^2-\sigma_y^2)
+ R,
\ee
where
\be\label{eq:remainder}
R&=& - n_2^{1/2}\frac{1}{2\sigma_{x*}^3\sigma_{y*}}(\wh\sigma_{xy2}-\sigma_{xy})(\wh\sigma_{x2}^2-\sigma_x^2)
- n_2^{1/2}\frac{1}{2\sigma_{x*}\sigma_{y*}^3}(\wh\sigma_{xy2}-\sigma_{xy})(\wh\sigma_{y2}^2-\sigma_y^2) \n\\
&& + n_2^{1/2}\frac{\sigma_{xy*}}{4\sigma_{x*}^3\sigma_{y*}^3}(\wh\sigma_{x2}^2-\sigma_x^2)(\wh\sigma_{y2}^2-\sigma_y^2)\n\\
&& - n_2^{1/2}\frac{3\sigma_{xy*}}{8\sigma_{x*}^5\sigma_{y*}}(\wh\sigma_{x2}^2-\sigma_x^2)^2
- n_2^{1/2}\frac{3\sigma_{xy*}}{8\sigma_{x*}\sigma_{y*}^5}(\wh\sigma_{y2}^2-\sigma_y^2)^2,
\ee
for some $\sigma_{x*}^2$ between $\sigma_x^2$ and $\wh\sigma_{x2}^2$, $\sigma_{y*}^2$ between $\sigma_y^2$ and $\wh\sigma_{y2}^2$, and $\sigma_{xy*}$ between $\sigma_{xy}$ and $\wh\sigma_{xy2}$.

We decompose $\wh \sigma_{x2}^2$ into three parts as follows:
\be\label{eq:sigmax2-decomp}
\wh \sigma_{x2}^2 - \sigma_x^2
&=& n_2^{-1} \sumib\{X_i- \wh m_{x1}(\Z_i)\}^2 - \sigma_x^2 \n\\
&=& n_2^{-1} \sumib\{X_i- m_x(\Z_i)\}^2 
 + n_2^{-1} \sumib\{m_x(\Z_i) - \wh m_{x1}(\Z_i)\}^2 \n\\
&& + 2 n_2^{-1} \sumib\{m_x(\Z_i) - \wh m_{x1}(\Z_i)\}\{X_i- m_x(\Z_i)\}
 - \sigma_x^2\n\\
&=& n_2^{-1} \sumib \epsilon_{xi}^2 
 + n_2^{-1} \sumib\{m_x(\Z_i) - \wh m_{x1}(\Z_i)\}^2 \n\\
&& + 2 n_2^{-1} \sumib\{m_x(\Z_i) - \wh m_{x1}(\Z_i)\}\epsilon_{xi}
 - \sigma_x^2.
\ee
Under $\|\wh m_{x1} - m_x\|_2 = o_p(n_1^{-1/4})$, we have
\bse
E \left[ n_2^{-1} \sumib\{m_x(\Z_i) - \wh m_{x1}(\Z_i)\}^2 | \wh m_{x1} \right]
&=& E \left[ \{m_x(\Z) - \wh m_{x1}(\Z)\}^2 | \wh m_{x1} \right] \\
&=& \|\wh m_{x1} - m_x\|_2^2 \\
&=& o_p(n_1^{-1/2}),
\ese
and since the sum is nonnegative, by Markov's inequality, we have 
\bse
n_2^{-1} \sumib\{m_x(\Z_i) - \wh m_{x1}(\Z_i)\}^2 = o_p(n_1^{-1/2}).
\ese
Also, note that
\bse
n_2^{-1} \sumib \epsilon_{xi}^2 = \sigma_x^2 + o_p(1).
\ese
Furthermore, 
\bse
E \left[ \{m_x(\Z) - \wh m_{x1}(\Z)\}\epsilon_x | \wh m_{x1} \right] 
= E \left[ \{m_x(\Z) - \wh m_{x1}(\Z)\} | \wh m_{x1} \right] E(\epsilon_x) = 0,
\ese
and under $\|\wh m_{x1} - m_x\|_2 = o_p(n_1^{-1/4})$,
\bse
&& \var \left[ n_2^{-1} \sumib\{m_x(\Z_i) - \wh m_{x1}(\Z_i)\}\epsilon_{xi} | \wh m_{x1} \right]\\
&=& n_2^{-1} \var \left[ \{m_x(\Z) - \wh m_{x1}(\Z)\}\epsilon_x | \wh m_{x1} \right]\\
&=& n_2^{-1} E \left[\{m_x(\Z) - \wh m_{x1}(\Z)\}^2\epsilon_x^2  | \wh m_{x1} \right]\\
&=& n_2^{-1} E \left[\{m_x(\Z) - \wh m_{x1}(\Z)\}^2  | \wh m_{x1} \right] E(\epsilon_x^2)\\
&=& n_2^{-1} \|\wh m_{x1} - m_x\|_2^2 \sigma_x^2\\
&=& o_p(n_2^{-1}n_1^{-1/2}),
\ese
so by Chebyshev's inequality, we have 
\bse
n_2^{-1} \sumib\{m_x(\Z_i) - \wh m_{x1}(\Z_i)\}\epsilon_{xi} 
= o_p(n_2^{-1/2}n_1^{-1/4}).
\ese
Plugging back to \eqref{eq:sigmax2-decomp}, we have
\be\label{eq:sigmax2-asymp}
\wh \sigma_{x2}^2 - \sigma_x^2
= n_2^{-1} \sumib \epsilon_{xi}^2 - \sigma_x^2
+ o_p(n_1^{-1/2} + n_2^{-1/2}n_1^{-1/4}).
\ee
Same arguments lead to, under $\|\wh m_{y1} - m_y\|_2 = o_p(n_1^{-1/4})$,
\be\label{eq:sigmay2-asymp}
\wh \sigma_{y2}^2 - \sigma_y^2
= n_2^{-1} \sumib \epsilon_{yi}^2 - \sigma_y^2
+ o_p(n_1^{-1/2} + n_2^{-1/2}n_1^{-1/4}).
\ee

We also decompose $\wh \sigma_{xy}$ into three parts as follows:
\be\label{eq:sigmaxy-decomp}
\wh \sigma_{xy2} - \sigma_{xy}
&=& n_2^{-1} \sumib\{X_i- \wh m_{x1}(\Z_i)\}\{Y_i- \wh m_{y1}(\Z_i)\} - \sigma_{xy} \n\\
&=& n_2^{-1} \sumib\{X_i- m_x(\Z_i)\}\{Y_i- m_y(\Z_i)\} \n\\
&& + n_2^{-1} \sumib\{m_x(\Z_i) - \wh m_{x1}(\Z_i)\}\{m_y(\Z_i) - \wh m_{y1}(\Z_i)\} \n\\
&& + n_2^{-1} \sumib\{m_x(\Z_i) - \wh m_{x1}(\Z_i)\}\{Y_i- m_y(\Z_i)\} \n\\
&& + n_2^{-1} \sumib\{m_y(\Z_i) - \wh m_{y1}(\Z_i)\}\{X_i- m_x(\Z_i)\}
 - \sigma_{xy} \n\\
&=& n_2^{-1} \sumib \epsilon_{xi} \epsilon_{yi}
 + n_2^{-1} \sumib\{m_x(\Z_i) - \wh m_{x1}(\Z_i)\}\{m_y(\Z_i) - \wh m_{y1}(\Z_i)\} \n\\
&& + n_2^{-1} \sumib\{m_x(\Z_i) - \wh m_{x1}(\Z_i)\}\epsilon_{yi}
+ n_2^{-1} \sumib\{m_y(\Z_i) - \wh m_{y1}(\Z_i)\}\epsilon_{xi} \n\\
&& - \sigma_{xy}.
\ee
Under $\|\wh m_{x1} - m_x\|_2 = o_p(n_1^{-1/4})$ and $\|\wh m_{y1} - m_y\|_2 = o_p(n_1^{-1/4})$, we have
\bse
&&  E \left[ \left| n_2^{-1} \sumib\{m_x(\Z_i) - \wh m_{x1}(\Z_i)\}\{m_y(\Z_i) - \wh m_{y1}(\Z_i)\} \right| | \wh m_{x1}, \wh m_{y1} \right]  \\
&\le&  E \left[ \left|m_x(\Z) - \wh m_{x1}(\Z)\right| \left|m_y(\Z) - \wh m_{y1}(\Z) \right|| \wh m_{x1}, \wh m_{y1} \right]  \\
&=&  \left\langle \left| \wh m_{x1} - m_x\right|, \left|\wh m_{y1} - m_y\right| \right\rangle_2  \\
&\le& \|\wh m_{x1} - m_x\|_2 \|\wh m_{y1} - m_y\|_2 \\
&=& o_p(n_1^{-1/4}) o_p(n_1^{-1/4}) \\
&=& o_p(n_1^{-1/2}).
\ese
By Markov's inequality, we have 
\bse
n_2^{-1} \sumib\{m_x(\Z_i) - \wh m_{x1}(\Z_i)\}\{m_y(\Z_i) - \wh m_{y1}(\Z_i)\} = o_p(n_1^{-1/2}).
\ese
Furthermore, 
\bse
E \left[ \{m_x(\Z) - \wh m_{x1}(\Z)\}\epsilon_y | \wh m_{x1} \right] 
= E \left[ \{m_x(\Z) - \wh m_{x1}(\Z)\} | \wh m_{x1} \right] E(\epsilon_y) = 0,
\ese
and under $\|\wh m_{x1} - m_x\|_2 = o_p(n_1^{-1/4})$,
\bse
&& \var \left[ n_2^{-1} \sumib\{m_x(\Z_i) - \wh m_{x1}(\Z_i)\}\epsilon_{yi} | \wh m_{x1} \right]\\
&=& n_2^{-1} \var \left[ \{m_x(\Z) - \wh m_{x1}(\Z)\}\epsilon_y | \wh m_{x1} \right]\\
&=& n_2^{-1} E \left[\{m_x(\Z) - \wh m_{x1}(\Z)\}^2\epsilon_y^2  | \wh m_{x1} \right]\\
&=& n_2^{-1} E \left[\{m_x(\Z) - \wh m_{x1}(\Z)\}^2  | \wh m_{x1} \right] E(\epsilon_y^2)\\
&=& n_2^{-1} \|\wh m_{x1} - m_x\|_2^2 \sigma_y^2\\
&=& o_p(n_2^{-1}n_1^{-1/2}),
\ese
so by Chebyshev's inequality, we have 
\bse
n_2^{-1} \sumib\{m_x(\Z_i) - \wh m_{x1}(\Z_i)\}\epsilon_{yi} 
= o_p(n_2^{-1/2}n_1^{-1/4}).
\ese
Same arguments lead to, under $\|\wh m_{y1} - m_y\|_2 = o_p(n_1^{-1/4})$, 
\bse
n_2^{-1} \sumib\{m_y(\Z_i) - \wh m_{y1}(\Z_i)\}\epsilon_{xi} 
= o_p(n_2^{-1/2}n_1^{-1/4}).
\ese
Plugging back to \eqref{eq:sigmaxy-decomp}, we have
\be\label{eq:sigmaxy-asymp}
\wh \sigma_{xy2} - \sigma_{xy}
= n_2^{-1} \sumib \epsilon_{xi}\epsilon_{yi} - \sigma_{xy}
+ o_p(n_1^{-1/2} + n_2^{-1/2}n_1^{-1/4}).
\ee

Applying central limit theorem and Slutsky's theorem to \eqref{eq:sigmax2-asymp}, \eqref{eq:sigmay2-asymp} and \eqref{eq:sigmaxy-asymp}, if $n_1 \asymp n_2 \asymp n$, we have
\bse
\wh \sigma_{x2}^2 - \sigma_x^2 = O_p(n^{-1/2}), \quad
\wh \sigma_{y2}^2 - \sigma_y^2 = O_p(n^{-1/2}), \quad
\wh \sigma_{xy2} - \sigma_{xy} = O_p(n^{-1/2}),
\ese
and thus,
\bse
\wh \sigma_{x*}^2 - \sigma_x^2 = o_p(1), \quad
\wh \sigma_{y*}^2 - \sigma_y^2 = o_p(1), \quad
\wh \sigma_{xy*} - \sigma_{xy} = o_p(1).
\ese
Plugging back to \eqref{eq:remainder}, when $n_1 \asymp n_2 \asymp n$, we have
\bse
R
&=& - n_2^{1/2}\left\{\frac{1}{2\sigma_x^3\sigma_y}+o_p(1)\right\}O_p(n^{-1/2})O_p(n^{-1/2})\\
&&- n_2^{1/2}\left\{\frac{1}{2\sigma_x\sigma_y^3}+o_p(1)\right\}O_p(n^{-1/2})O_p(n^{-1/2}) \n\\
&& + n_2^{1/2}\left\{\frac{\sigma_{xy}}{4\sigma_x^3\sigma_y^3}+o_p(1)\right\}O_p(n^{-1/2})O_p(n^{-1/2})\n\\
&& - n_2^{1/2}\left\{\frac{3\sigma_{xy}}{8\sigma_x^5\sigma_y}+o_p(1)\right\}O_p(n^{-1/2})^2
- n_2^{1/2}\left\{\frac{3\sigma_{xy}}{8\sigma_x\sigma_y^5}+o_p(1)\right\}O_p(n^{-1/2})^2\\
&=& O_p(n^{-1/2}).
\ese
Also, when $n_1 \asymp n_2 \asymp n$, the remainder terms in \eqref{eq:sigmax2-asymp}, \eqref{eq:sigmay2-asymp} and \eqref{eq:sigmaxy-asymp} all become $o_p(n^{-1/2})$. Therefore, plugging back into \eqref{eq:rho2-decomp}, we have
\bse
&& n_2^{1/2}(\wh\rho_2 - \rho) \\
&=& \frac{1}{\sigma_x\sigma_y}n_2^{-1/2} \sumib (\epsilon_{xi}\epsilon_{yi} - \sigma_{xy})
- \frac{\sigma_{xy}}{2\sigma_x^3\sigma_y}n_2^{-1/2} \sumib (\epsilon_{xi}^2 - \sigma_x^2)\n\\
&&- \frac{\sigma_{xy}}{2\sigma_x\sigma_y^3}n_2^{-1/2} \sumib (\epsilon_{yi}^2 - \sigma_y^2)
+ O_p(n^{-1/2})\\
&=& n_2^{-1/2} \sumib \left(\frac{\epsilon_{xi}\epsilon_{yi}}{\sigma_x\sigma_y} - \frac{\sigma_{xy}}{\sigma_x\sigma_y} 
- \frac{\sigma_{xy}\epsilon_{xi}^2}{2\sigma_x^3\sigma_y} + \frac{\sigma_{xy}}{2\sigma_x\sigma_y}
- \frac{\sigma_{xy}\epsilon_{yi}^2}{2\sigma_x\sigma_y^3} + \frac{\sigma_{xy}}{2\sigma_x\sigma_y}\right)
+ O_p(n^{-1/2})\\
&=& n_2^{-1/2} \sumib \left(\wt\epsilon_{xi}\wt\epsilon_{yi}
- \frac{\rho}{2}\wt\epsilon_{xi}^2
- \frac{\rho}{2}\wt\epsilon_{yi}^2 \right)
+ O_p(n^{-1/2})\\
&=& n_2^{-1/2} \sumib -\frac{1}{2}\left(\rho\wt\epsilon_{xi}^2-2\wt\epsilon_{xi}\wt\epsilon_{yi}+\rho\wt\epsilon_{yi}^2\right)
+ O_p(n^{-1/2}).
\ese
Comparing with \eqref{eq:eif}, we have
\be\label{eq:rho2-if}
n_2^{1/2}(\wh\rho_2 - \rho)
= n_2^{-1/2} \sumib \phi\eff(X_i,Y_i,\Z_i)
+ O_p(n^{-1/2}).
\ee

Similarly, under 
$\|\wh m_{x2} - m_x\|_2 = o_p(n_2^{-1/4})$ and
$\|\wh m_{y2} - m_y\|_2 = o_p(n_2^{-1/4})$
with $n_1 \asymp n_2 \asymp n$, we also have
\be\label{eq:rho1-if}
n_1^{1/2}(\wh\rho_1 - \rho) 
= n_1^{-1/2} \sumia \phi\eff(X_i,Y_i,\Z_i)
+ O_p(n^{-1/2}).
\ee

Then, a direct application of \eqref{eq:rho2-if} and \eqref{eq:rho1-if} gives the asymptotic expansion for $\wh\rho$ as
\bse
n^{1/2}(\wh\rho - \rho) 
= n^{-1/2} \sumi \phi\eff(X_i,Y_i,\Z_i)
+ O_p(n^{-1/2}).
\ese
\end{proof}

\newpage

\section{Additional Simulation Results}

\begin{table}[htbp]
    \centering
    \footnotesize
\begin{tabular}{rrrrrrrrr}
\hline
 $n$ & RPCO & RPCS & RPCF & PaCo & RHSIC & RRIT & RCIT & RCoT\\
\hline
\hline
 100 & 0.036 & 0.074 & 0.062 & 0.072 & 0.052 & 0.044 & 1.000 & 1.000\\
 200 & 0.032 & 0.054 & 0.038 & 0.050 & 0.048 & 0.040 & 0.300 & 0.222\\
 500 & 0.068 & 0.068 & 0.072 & 0.072 & 0.052 & 0.044 & 0.094 & 0.096\\
 1000 & 0.058 & 0.068 & 0.058 & 0.056 & 0.034 & 0.040 & 0.058 & 0.060\\
 2000 & 0.046 & 0.046 & 0.042 & 0.044 & 0.050 & 0.076 & 0.074 & 0.050\\
 5000 & 0.042 & 0.050 & 0.040 & 0.042 & 0.058 & 0.046 & 0.054 & 0.056\\
\hline
\end{tabular}
\caption{Empirical levels of eight tests under Model 2 based on 500 experiments.}
\vspace{1cm}
    \label{tab:ep-level-2a}
\end{table}

\begin{figure}[htb]
    \centering
    \includegraphics[width=0.75\linewidth,page=2]{fig/model-null.pdf}
    \caption{Boxplots of p-values of eight tests for Model 2 under the null hypothesis, with sample sizes $n=100,200,500,1000,2000,5000$. The red line represents 0.05.}
    \label{fig:boxplot-null-2a}
\end{figure}

\begin{table}[htbp]
    \centering
    \footnotesize
\begin{tabular}{rr|rrrrrrrr}
\hline
$\rho$ & $n$ & RPCO & RPCS & RPCF & PaCo & RHSIC & RRIT & RCIT & RCoT\\
\hline
\hline
-0.25 & 100 & 0.682 & 0.588 & 0.644 & 0.670 & 0.310 & 0.456 & 1.000 & 1.000\\
-0.25 & 200 & 0.962 & 0.930 & 0.950 & 0.952 & 0.622 & 0.778 & 0.714 & 0.750\\
-0.25 & 500 & 1.000 & 1.000 & 1.000 & 1.000 & 0.970 & 0.986 & 0.946 & 0.978\\
-0.25 & 1000 & 1.000 & 1.000 & 1.000 & 1.000 & 1.000 & 1.000 & 0.982 & 0.998\\
-0.25 & 2000 & 1.000 & 1.000 & 1.000 & 1.000 & 1.000 & 1.000 & 1.000 & 0.998\\
-0.25 & 5000 & 1.000 & 1.000 & 1.000 & 1.000 & 1.000 & 1.000 & 0.998 & 1.000\\
\hline
-0.50 & 100 & 1.000 & 0.996 & 1.000 & 1.000 & 0.948 & 0.970 & 1.000 & 1.000\\
-0.50 & 200 & 1.000 & 1.000 & 1.000 & 1.000 & 1.000 & 1.000 & 0.972 & 0.998\\
-0.50 & 500 & 1.000 & 1.000 & 1.000 & 1.000 & 1.000 & 1.000 & 1.000 & 1.000\\
-0.50 & 1000 & 1.000 & 1.000 & 1.000 & 1.000 & 1.000 & 1.000 & 1.000 & 1.000\\
-0.50 & 2000 & 1.000 & 1.000 & 1.000 & 1.000 & 1.000 & 1.000 & 1.000 & 1.000\\
-0.50 & 5000 & 1.000 & 1.000 & 1.000 & 1.000 & 1.000 & 1.000 & 1.000 & 1.000\\
\hline
-0.75 & 100 & 1.000 & 1.000 & 1.000 & 1.000 & 1.000 & 1.000 & 1.000 & 1.000\\
-0.75 & 200 & 1.000 & 1.000 & 1.000 & 1.000 & 1.000 & 1.000 & 1.000 & 1.000\\
-0.75 & 500 & 1.000 & 1.000 & 1.000 & 1.000 & 1.000 & 1.000 & 1.000 & 1.000\\
-0.75 & 1000 & 1.000 & 1.000 & 1.000 & 1.000 & 1.000 & 1.000 & 1.000 & 1.000\\
-0.75 & 2000 & 1.000 & 1.000 & 1.000 & 1.000 & 1.000 & 1.000 & 1.000 & 1.000\\
-0.75 & 5000 & 1.000 & 1.000 & 1.000 & 1.000 & 1.000 & 1.000 & 1.000 & 1.000\\
\hline
-1.00 & 100 & 1.000 & 1.000 & 1.000 & 1.000 & 1.000 & 1.000 & 1.000 & 1.000\\
-1.00 & 200 & 1.000 & 1.000 & 1.000 & 1.000 & 1.000 & 1.000 & 1.000 & 1.000\\
-1.00 & 500 & 1.000 & 1.000 & 1.000 & 1.000 & 1.000 & 1.000 & 1.000 & 1.000\\
-1.00 & 1000 & 1.000 & 1.000 & 1.000 & 1.000 & 1.000 & 1.000 & 1.000 & 1.000\\
-1.00 & 2000 & 1.000 & 1.000 & 1.000 & 1.000 & 1.000 & 1.000 & 1.000 & 1.000\\
-1.00 & 5000 & 1.000 & 1.000 & 1.000 & 1.000 & 1.000 & 1.000 & 1.000 & 1.000\\
\hline
\hline
-0.025 & 100 & 0.070 & 0.086 & 0.078 & 0.078 & 0.056 & 0.060 & 1.000 & 1.000\\
-0.025 & 200 & 0.056 & 0.064 & 0.048 & 0.054 & 0.044 & 0.052 & 0.186 & 0.204\\
-0.025 & 500 & 0.108 & 0.118 & 0.122 & 0.120 & 0.054 & 0.072 & 0.136 & 0.118\\
-0.025 & 1000 & 0.132 & 0.136 & 0.136 & 0.138 & 0.076 & 0.080 & 0.112 & 0.130\\
-0.025 & 2000 & 0.160 & 0.164 & 0.162 & 0.164 & 0.100 & 0.114 & 0.118 & 0.130\\
-0.025 & 5000 & 0.434 & 0.426 & 0.434 & 0.430 & 0.140 & 0.284 & 0.244 & 0.264\\
\hline
-0.050 & 100 & 0.056 & 0.066 & 0.056 & 0.070 & 0.060 & 0.044 & 1.000 & 1.000\\
-0.050 & 200 & 0.098 & 0.104 & 0.106 & 0.108 & 0.076 & 0.072 & 0.196 & 0.254\\
-0.050 & 500 & 0.196 & 0.216 & 0.206 & 0.204 & 0.094 & 0.160 & 0.196 & 0.188\\
-0.050 & 1000 & 0.364 & 0.350 & 0.370 & 0.372 & 0.168 & 0.244 & 0.220 & 0.264\\
-0.050 & 2000 & 0.606 & 0.598 & 0.610 & 0.598 & 0.276 & 0.418 & 0.362 & 0.422\\
-0.050 & 5000 & 0.956 & 0.952 & 0.954 & 0.950 & 0.608 & 0.818 & 0.714 & 0.780\\
\hline
-0.075 & 100 & 0.160 & 0.160 & 0.140 & 0.152 & 0.076 & 0.076 & 1.000 & 1.000\\
-0.075 & 200 & 0.192 & 0.210 & 0.188 & 0.200 & 0.086 & 0.116 & 0.294 & 0.274\\
-0.075 & 500 & 0.360 & 0.336 & 0.356 & 0.362 & 0.138 & 0.240 & 0.240 & 0.272\\
-0.075 & 1000 & 0.652 & 0.632 & 0.642 & 0.640 & 0.278 & 0.438 & 0.360 & 0.434\\
-0.075 & 2000 & 0.910 & 0.902 & 0.916 & 0.916 & 0.574 & 0.740 & 0.672 & 0.726\\
-0.075 & 5000 & 1.000 & 1.000 & 1.000 & 1.000 & 0.944 & 0.976 & 0.926 & 0.968\\
\hline
-0.100 & 100 & 0.144 & 0.164 & 0.140 & 0.160 & 0.086 & 0.102 & 1.000 & 1.000\\
-0.100 & 200 & 0.312 & 0.306 & 0.288 & 0.296 & 0.122 & 0.190 & 0.336 & 0.330\\
-0.100 & 500 & 0.624 & 0.620 & 0.624 & 0.624 & 0.236 & 0.416 & 0.416 & 0.426\\
-0.100 & 1000 & 0.904 & 0.882 & 0.896 & 0.900 & 0.502 & 0.718 & 0.656 & 0.706\\
-0.100 & 2000 & 0.994 & 0.996 & 0.992 & 0.992 & 0.820 & 0.962 & 0.888 & 0.910\\
-0.100 & 5000 & 1.000 & 1.000 & 1.000 & 1.000 & 0.996 & 0.992 & 0.986 & 0.998\\
\hline
\end{tabular}    
    \caption{Empirical powers of eight tests under Model 1 based on 500 experiments, where the alternative distributions include (1) $\rho=-0.25, -0.5, -0.75, -1$ and (2) $\rho= -0.025, -0.05, -0.075, -0.1$.}
    \label{tab:ep-1a-2}
\end{table}

\begin{figure}[htbp]
    \centering
    \includegraphics[width=\linewidth,page=2]{fig/model1-p1.pdf}\vspace{-0.8cm}
    \includegraphics[width=\linewidth,page=4]{fig/model1-p1.pdf}\vspace{-0.8cm}
    \includegraphics[width=\linewidth,page=6]{fig/model1-p1.pdf}\vspace{-0.8cm}
    \includegraphics[width=\linewidth,page=8]{fig/model1-p1.pdf}\vspace{-0.8cm}
    \includegraphics[width=\linewidth,page=10]{fig/model1-p1.pdf}\vspace{-0.8cm}
    \includegraphics[width=\linewidth,page=12]{fig/model1-p1.pdf}\vspace{-0.8cm}
\caption{Boxplots of p-values of eight tests for Model 1 under the alternative hypothesis, with sample sizes $n=100,200,500,1000,2000,5000$, where the alternative distributions include $\rho= 0.25,  0.5,  0.75,  1$. The red line represents 0.05.}
    \label{fig:boxplot-1a-1}
\end{figure}

\begin{figure}[htbp]
    \centering
    \includegraphics[width=\linewidth,page=2]{fig/model1-p2.pdf}\vspace{-0.8cm}
    \includegraphics[width=\linewidth,page=4]{fig/model1-p2.pdf}\vspace{-0.8cm}
    \includegraphics[width=\linewidth,page=6]{fig/model1-p2.pdf}\vspace{-0.8cm}
    \includegraphics[width=\linewidth,page=8]{fig/model1-p2.pdf}\vspace{-0.8cm}
    \includegraphics[width=\linewidth,page=10]{fig/model1-p2.pdf}\vspace{-0.8cm}
    \includegraphics[width=\linewidth,page=12]{fig/model1-p2.pdf}\vspace{-0.8cm}
\caption{Boxplots of p-values of eight tests for Model 1 under the alternative hypothesis, with sample sizes $n=100,200,500,1000,2000,5000$, where the alternative distributions include $\rho= 0.025,  0.05,  0.075,  0.1$. The red line represents 0.05.}
    \label{fig:boxplot-1a-2}
\end{figure}

\begin{figure}[htbp]
    \centering
    \includegraphics[width=\linewidth,page=1]{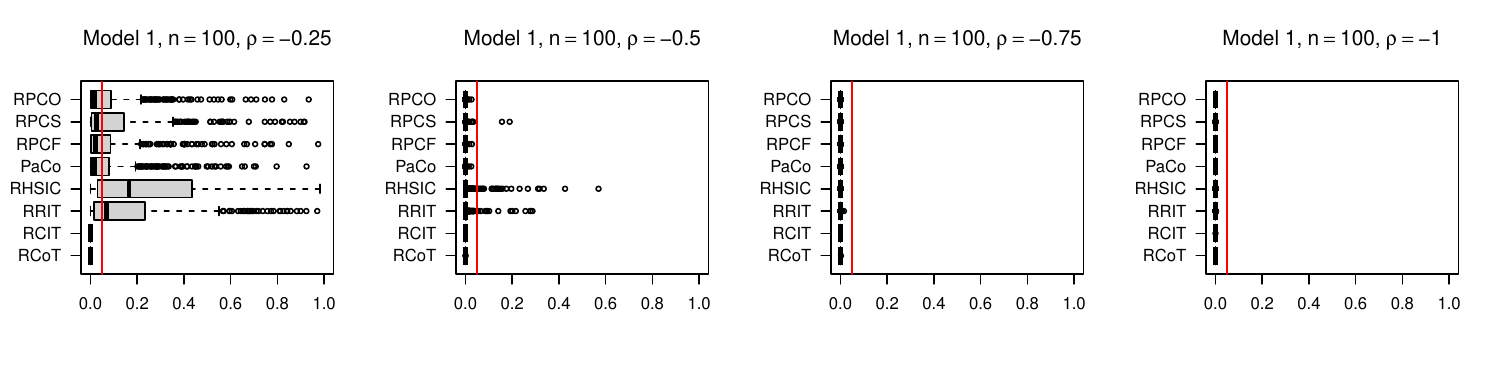}\vspace{-0.8cm}
    \includegraphics[width=\linewidth,page=3]{fig/model1-p1.pdf}\vspace{-0.8cm}
    \includegraphics[width=\linewidth,page=5]{fig/model1-p1.pdf}\vspace{-0.8cm}
    \includegraphics[width=\linewidth,page=7]{fig/model1-p1.pdf}\vspace{-0.8cm}
    \includegraphics[width=\linewidth,page=9]{fig/model1-p1.pdf}\vspace{-0.8cm}
    \includegraphics[width=\linewidth,page=11]{fig/model1-p1.pdf}\vspace{-0.8cm}
\caption{Boxplots of p-values of eight tests for Model 1 under the alternative hypothesis, with sample sizes $n=100,200,500,1000,2000,5000$, where the alternative distributions include $\rho= -0.25,  -0.5,  -0.75,  -1$. The red line represents 0.05.}
    \label{fig:boxplot-1a-3}
\end{figure}

\begin{figure}[htbp]
    \centering
    \includegraphics[width=\linewidth,page=1]{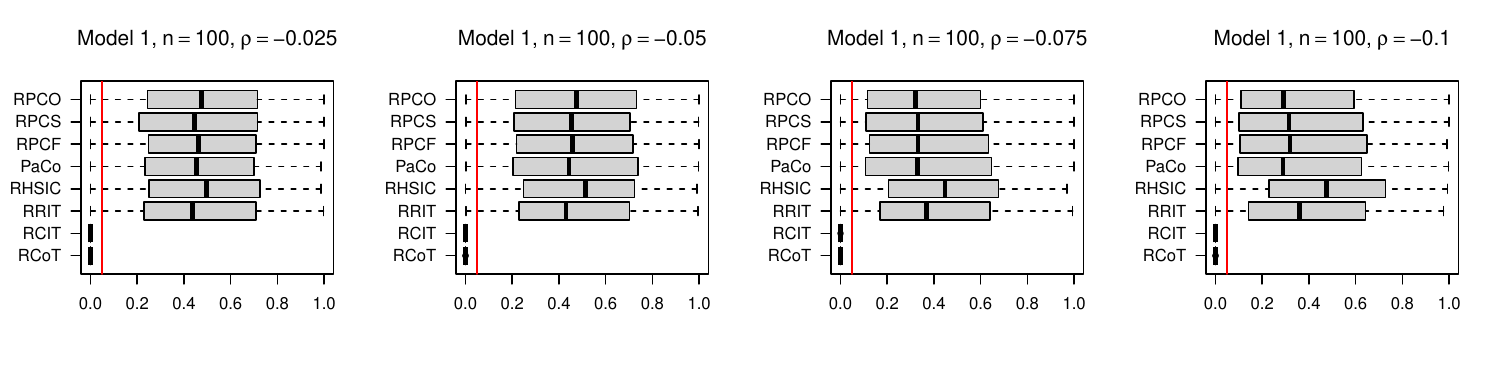}\vspace{-0.8cm}
    \includegraphics[width=\linewidth,page=3]{fig/model1-p2.pdf}\vspace{-0.8cm}
    \includegraphics[width=\linewidth,page=5]{fig/model1-p2.pdf}\vspace{-0.8cm}
    \includegraphics[width=\linewidth,page=7]{fig/model1-p2.pdf}\vspace{-0.8cm}
    \includegraphics[width=\linewidth,page=9]{fig/model1-p2.pdf}\vspace{-0.8cm}
    \includegraphics[width=\linewidth,page=11]{fig/model1-p2.pdf}\vspace{-0.8cm}
\caption{Boxplots of p-values of eight tests for Model 1 under the alternative hypothesis, with sample sizes $n=100,200,500,1000,2000,5000$, where the alternative distributions include $\rho= -0.025,  -0.05,  -0.075,  -0.1$. The red line represents 0.05.}
    \label{fig:boxplot-1a-4}
\end{figure}

\begin{table}[htbp]
    \centering
    \footnotesize
\begin{tabular}{rr|rrrrrrrr}
\hline
$\rho$ & $n$ & RPCO & RPCS & RPCF & PaCo & RHSIC & RRIT & RCIT & RCoT\\
\hline
\hline
-0.25 & 100 & 0.722 & 0.556 & 0.630 & 0.708 & 0.282 & 0.444 & 1.000 & 1.000\\
-0.25 & 200 & 0.922 & 0.860 & 0.902 & 0.916 & 0.576 & 0.742 & 0.716 & 0.776\\
-0.25 & 500 & 1.000 & 1.000 & 1.000 & 1.000 & 0.960 & 0.980 & 0.886 & 0.980\\
-0.25 & 1000 & 1.000 & 1.000 & 1.000 & 1.000 & 1.000 & 1.000 & 0.976 & 1.000\\
-0.25 & 2000 & 1.000 & 1.000 & 1.000 & 1.000 & 1.000 & 1.000 & 0.990 & 1.000\\
-0.25 & 5000 & 1.000 & 1.000 & 1.000 & 1.000 & 1.000 & 0.998 & 0.998 & 1.000\\
0.25 & 100 & 0.738 & 0.690 & 0.726 & 0.720 & 0.368 & 0.484 & 1.000 & 1.000\\
0.25 & 200 & 0.934 & 0.906 & 0.940 & 0.936 & 0.606 & 0.784 & 0.704 & 0.778\\
0.25 & 500 & 1.000 & 1.000 & 1.000 & 1.000 & 0.970 & 0.984 & 0.924 & 0.994\\
0.25 & 1000 & 1.000 & 1.000 & 1.000 & 1.000 & 1.000 & 1.000 & 0.972 & 0.998\\
0.25 & 2000 & 1.000 & 1.000 & 1.000 & 1.000 & 1.000 & 1.000 & 0.986 & 1.000\\
0.25 & 5000 & 1.000 & 1.000 & 1.000 & 1.000 & 1.000 & 1.000 & 1.000 & 1.000\\
\hline
-0.50 & 100 & 1.000 & 0.994 & 1.000 & 1.000 & 0.946 & 0.954 & 1.000 & 1.000\\
-0.50 & 200 & 1.000 & 1.000 & 1.000 & 1.000 & 1.000 & 1.000 & 0.974 & 1.000\\
-0.50 & 500 & 1.000 & 1.000 & 1.000 & 1.000 & 1.000 & 1.000 & 0.992 & 1.000\\
-0.50 & 1000 & 1.000 & 1.000 & 1.000 & 1.000 & 1.000 & 1.000 & 0.998 & 1.000\\
-0.50 & 2000 & 1.000 & 1.000 & 1.000 & 1.000 & 1.000 & 1.000 & 1.000 & 1.000\\
-0.50 & 5000 & 1.000 & 1.000 & 1.000 & 1.000 & 1.000 & 1.000 & 1.000 & 1.000\\
0.50 & 100 & 1.000 & 0.994 & 1.000 & 1.000 & 0.966 & 0.972 & 1.000 & 1.000\\
0.50 & 200 & 1.000 & 1.000 & 1.000 & 1.000 & 1.000 & 1.000 & 0.970 & 1.000\\
0.50 & 500 & 1.000 & 1.000 & 1.000 & 1.000 & 1.000 & 1.000 & 0.998 & 1.000\\
0.50 & 1000 & 1.000 & 1.000 & 1.000 & 1.000 & 1.000 & 1.000 & 1.000 & 1.000\\
0.50 & 2000 & 1.000 & 1.000 & 1.000 & 1.000 & 1.000 & 1.000 & 1.000 & 1.000\\
0.50 & 5000 & 1.000 & 1.000 & 1.000 & 1.000 & 1.000 & 1.000 & 1.000 & 1.000\\
\hline
-0.75 & 100 & 1.000 & 1.000 & 1.000 & 1.000 & 1.000 & 0.998 & 1.000 & 1.000\\
-0.75 & 200 & 1.000 & 1.000 & 1.000 & 1.000 & 1.000 & 1.000 & 0.992 & 1.000\\
-0.75 & 500 & 1.000 & 1.000 & 1.000 & 1.000 & 1.000 & 1.000 & 0.996 & 1.000\\
-0.75 & 1000 & 1.000 & 1.000 & 1.000 & 1.000 & 1.000 & 1.000 & 1.000 & 1.000\\
-0.75 & 2000 & 1.000 & 1.000 & 1.000 & 1.000 & 1.000 & 1.000 & 1.000 & 1.000\\
-0.75 & 5000 & 1.000 & 1.000 & 1.000 & 1.000 & 1.000 & 1.000 & 1.000 & 1.000\\
0.75 & 100 & 1.000 & 1.000 & 1.000 & 1.000 & 1.000 & 0.998 & 1.000 & 1.000\\
0.75 & 200 & 1.000 & 1.000 & 1.000 & 1.000 & 1.000 & 1.000 & 0.990 & 1.000\\
0.75 & 500 & 1.000 & 1.000 & 1.000 & 1.000 & 1.000 & 1.000 & 0.998 & 1.000\\
0.75 & 1000 & 1.000 & 1.000 & 1.000 & 1.000 & 1.000 & 1.000 & 0.998 & 1.000\\
0.75 & 2000 & 1.000 & 1.000 & 1.000 & 1.000 & 1.000 & 1.000 & 1.000 & 1.000\\
0.75 & 5000 & 1.000 & 1.000 & 1.000 & 1.000 & 1.000 & 1.000 & 1.000 & 1.000\\
\hline
-1.00 & 100 & 1.000 & 1.000 & 1.000 & 1.000 & 1.000 & 1.000 & 1.000 & 1.000\\
-1.00 & 200 & 1.000 & 1.000 & 1.000 & 1.000 & 1.000 & 1.000 & 0.998 & 1.000\\
-1.00 & 500 & 1.000 & 1.000 & 1.000 & 1.000 & 1.000 & 1.000 & 1.000 & 1.000\\
-1.00 & 1000 & 1.000 & 1.000 & 1.000 & 1.000 & 1.000 & 1.000 & 1.000 & 1.000\\
-1.00 & 2000 & 1.000 & 1.000 & 1.000 & 1.000 & 1.000 & 1.000 & 1.000 & 1.000\\
-1.00 & 5000 & 1.000 & 1.000 & 1.000 & 1.000 & 1.000 & 1.000 & 1.000 & 1.000\\
1.00 & 100 & 1.000 & 1.000 & 1.000 & 1.000 & 1.000 & 1.000 & 1.000 & 1.000\\
1.00 & 200 & 1.000 & 1.000 & 1.000 & 1.000 & 1.000 & 1.000 & 1.000 & 1.000\\
1.00 & 500 & 1.000 & 1.000 & 1.000 & 1.000 & 1.000 & 1.000 & 1.000 & 1.000\\
1.00 & 1000 & 1.000 & 1.000 & 1.000 & 1.000 & 1.000 & 1.000 & 1.000 & 1.000\\
1.00 & 2000 & 1.000 & 1.000 & 1.000 & 1.000 & 1.000 & 1.000 & 1.000 & 1.000\\
1.00 & 5000 & 1.000 & 1.000 & 1.000 & 1.000 & 1.000 & 1.000 & 1.000 & 1.000\\
\hline
\end{tabular}
    \caption{Empirical powers of eight tests under Model 2 based on 500 experiments, where the alternative distributions include $\rho=\pm 0.25, \pm 0.5, \pm 0.75, \pm 1$.}
    \label{tab:ep-2a-1}
\end{table}

\begin{table}[htbp]
    \centering
    \footnotesize
\begin{tabular}{rr|rrrrrrrr}
\hline
$\rho$ & $n$ & RPCO & RPCS & RPCF & PaCo & RHSIC & RRIT & RCIT & RCoT\\
\hline
\hline
-0.025 & 100 & 0.056 & 0.064 & 0.048 & 0.062 & 0.048 & 0.052 & 1.000 & 1.000\\
-0.025 & 200 & 0.048 & 0.056 & 0.044 & 0.058 & 0.064 & 0.060 & 0.290 & 0.246\\
-0.025 & 500 & 0.088 & 0.082 & 0.080 & 0.088 & 0.052 & 0.078 & 0.106 & 0.118\\
-0.025 & 1000 & 0.126 & 0.116 & 0.124 & 0.126 & 0.056 & 0.082 & 0.112 & 0.106\\
-0.025 & 2000 & 0.186 & 0.178 & 0.186 & 0.198 & 0.090 & 0.124 & 0.126 & 0.166\\
-0.025 & 5000 & 0.396 & 0.406 & 0.402 & 0.400 & 0.180 & 0.296 & 0.228 & 0.262\\
0.025 & 100 & 0.050 & 0.058 & 0.056 & 0.058 & 0.072 & 0.054 & 1.000 & 1.000\\
0.025 & 200 & 0.060 & 0.082 & 0.058 & 0.058 & 0.056 & 0.038 & 0.300 & 0.254\\
0.025 & 500 & 0.090 & 0.094 & 0.094 & 0.096 & 0.062 & 0.058 & 0.124 & 0.108\\
0.025 & 1000 & 0.102 & 0.118 & 0.098 & 0.100 & 0.068 & 0.094 & 0.110 & 0.110\\
0.025 & 2000 & 0.196 & 0.212 & 0.194 & 0.198 & 0.088 & 0.126 & 0.110 & 0.148\\
0.025 & 5000 & 0.406 & 0.408 & 0.400 & 0.402 & 0.148 & 0.282 & 0.184 & 0.248\\
\hline
-0.050 & 100 & 0.090 & 0.086 & 0.088 & 0.112 & 0.058 & 0.066 & 1.000 & 1.000\\
-0.050 & 200 & 0.112 & 0.098 & 0.086 & 0.110 & 0.082 & 0.076 & 0.302 & 0.242\\
-0.050 & 500 & 0.186 & 0.184 & 0.182 & 0.194 & 0.092 & 0.116 & 0.172 & 0.176\\
-0.050 & 1000 & 0.332 & 0.314 & 0.336 & 0.344 & 0.128 & 0.208 & 0.222 & 0.238\\
-0.050 & 2000 & 0.592 & 0.562 & 0.580 & 0.588 & 0.232 & 0.402 & 0.266 & 0.374\\
-0.050 & 5000 & 0.944 & 0.946 & 0.942 & 0.946 & 0.618 & 0.816 & 0.606 & 0.790\\
0.050 & 100 & 0.076 & 0.096 & 0.094 & 0.102 & 0.060 & 0.072 & 1.000 & 1.000\\
0.050 & 200 & 0.112 & 0.144 & 0.130 & 0.134 & 0.074 & 0.092 & 0.344 & 0.268\\
0.050 & 500 & 0.220 & 0.238 & 0.238 & 0.230 & 0.114 & 0.144 & 0.170 & 0.182\\
0.050 & 1000 & 0.370 & 0.400 & 0.384 & 0.384 & 0.162 & 0.252 & 0.216 & 0.268\\
0.050 & 2000 & 0.636 & 0.644 & 0.638 & 0.634 & 0.274 & 0.472 & 0.312 & 0.450\\
0.050 & 5000 & 0.972 & 0.974 & 0.968 & 0.970 & 0.600 & 0.810 & 0.644 & 0.794\\
\hline
-0.075 & 100 & 0.124 & 0.104 & 0.090 & 0.122 & 0.064 & 0.074 & 1.000 & 1.000\\
-0.075 & 200 & 0.166 & 0.128 & 0.142 & 0.176 & 0.078 & 0.106 & 0.352 & 0.268\\
-0.075 & 500 & 0.398 & 0.338 & 0.364 & 0.386 & 0.158 & 0.280 & 0.248 & 0.326\\
-0.075 & 1000 & 0.668 & 0.626 & 0.642 & 0.664 & 0.298 & 0.446 & 0.366 & 0.490\\
-0.075 & 2000 & 0.942 & 0.926 & 0.940 & 0.942 & 0.528 & 0.744 & 0.590 & 0.730\\
-0.075 & 5000 & 1.000 & 1.000 & 1.000 & 1.000 & 0.950 & 0.980 & 0.906 & 0.972\\
0.075 & 100 & 0.108 & 0.140 & 0.126 & 0.124 & 0.086 & 0.100 & 1.000 & 1.000\\
0.075 & 200 & 0.190 & 0.194 & 0.204 & 0.186 & 0.100 & 0.106 & 0.370 & 0.308\\
0.075 & 500 & 0.392 & 0.392 & 0.404 & 0.392 & 0.160 & 0.256 & 0.260 & 0.304\\
0.075 & 1000 & 0.636 & 0.634 & 0.640 & 0.630 & 0.304 & 0.448 & 0.338 & 0.446\\
0.075 & 2000 & 0.926 & 0.920 & 0.924 & 0.918 & 0.528 & 0.752 & 0.606 & 0.758\\
0.075 & 5000 & 0.998 & 0.998 & 0.998 & 0.998 & 0.924 & 0.962 & 0.876 & 0.972\\
\hline
-0.100 & 100 & 0.178 & 0.122 & 0.144 & 0.190 & 0.078 & 0.076 & 1.000 & 1.000\\
-0.100 & 200 & 0.280 & 0.212 & 0.252 & 0.282 & 0.106 & 0.170 & 0.396 & 0.370\\
-0.100 & 500 & 0.614 & 0.566 & 0.578 & 0.612 & 0.216 & 0.412 & 0.356 & 0.434\\
-0.100 & 1000 & 0.886 & 0.852 & 0.880 & 0.888 & 0.506 & 0.700 & 0.508 & 0.668\\
-0.100 & 2000 & 0.998 & 0.994 & 0.998 & 0.998 & 0.830 & 0.924 & 0.820 & 0.924\\
-0.100 & 5000 & 1.000 & 1.000 & 1.000 & 1.000 & 1.000 & 1.000 & 0.950 & 0.998\\
0.100 & 100 & 0.166 & 0.190 & 0.182 & 0.178 & 0.084 & 0.132 & 1.000 & 1.000\\
0.100 & 200 & 0.314 & 0.318 & 0.312 & 0.306 & 0.156 & 0.204 & 0.398 & 0.386\\
0.100 & 500 & 0.608 & 0.610 & 0.630 & 0.618 & 0.242 & 0.430 & 0.324 & 0.448\\
0.100 & 1000 & 0.900 & 0.894 & 0.894 & 0.890 & 0.486 & 0.736 & 0.508 & 0.694\\
0.100 & 2000 & 0.992 & 0.994 & 0.994 & 0.992 & 0.810 & 0.932 & 0.780 & 0.912\\
0.100 & 5000 & 1.000 & 1.000 & 1.000 & 1.000 & 1.000 & 0.998 & 0.946 & 1.000\\
\hline
\end{tabular}
    \caption{Empirical powers of eight tests under Model 2 based on 500 experiments, where the alternative distributions include $\rho=\pm 0.025, \pm 0.05, \pm 0.075, \pm 0.1$.}
    \label{tab:ep-2a-2}
\end{table}

\begin{figure}[htbp]
    \centering
    \includegraphics[width=\linewidth,page=2]{fig/model2-p1.pdf}\vspace{-0.8cm}
    \includegraphics[width=\linewidth,page=4]{fig/model2-p1.pdf}\vspace{-0.8cm}
    \includegraphics[width=\linewidth,page=6]{fig/model2-p1.pdf}\vspace{-0.8cm}
    \includegraphics[width=\linewidth,page=8]{fig/model2-p1.pdf}\vspace{-0.8cm}
    \includegraphics[width=\linewidth,page=10]{fig/model2-p1.pdf}\vspace{-0.8cm}
    \includegraphics[width=\linewidth,page=12]{fig/model2-p1.pdf}\vspace{-0.8cm}
\caption{Boxplots of p-values of eight tests for Model 2 under the alternative hypothesis, with sample sizes $n=100,200,500,1000,2000,5000$, where the alternative distributions include $\rho= 0.25,  0.5,  0.75,  1$. The red line represents 0.05.}
    \label{fig:boxplot-2a-1}
\end{figure}

\begin{figure}[htbp]
    \centering
    \includegraphics[width=\linewidth,page=2]{fig/model2-p2.pdf}\vspace{-0.8cm}
    \includegraphics[width=\linewidth,page=4]{fig/model2-p2.pdf}\vspace{-0.8cm}
    \includegraphics[width=\linewidth,page=6]{fig/model2-p2.pdf}\vspace{-0.8cm}
    \includegraphics[width=\linewidth,page=8]{fig/model2-p2.pdf}\vspace{-0.8cm}
    \includegraphics[width=\linewidth,page=10]{fig/model2-p2.pdf}\vspace{-0.8cm}
    \includegraphics[width=\linewidth,page=12]{fig/model2-p2.pdf}\vspace{-0.8cm}
\caption{Boxplots of p-values of eight tests for Model 2 under the alternative hypothesis, with sample sizes $n=100,200,500,1000,2000,5000$, where the alternative distributions include $\rho= 0.025,  0.05,  0.075,  0.1$. The red line represents 0.05.}
    \label{fig:boxplot-2a-2}
\end{figure}

\begin{figure}[htbp]
    \centering
    \includegraphics[width=\linewidth,page=1]{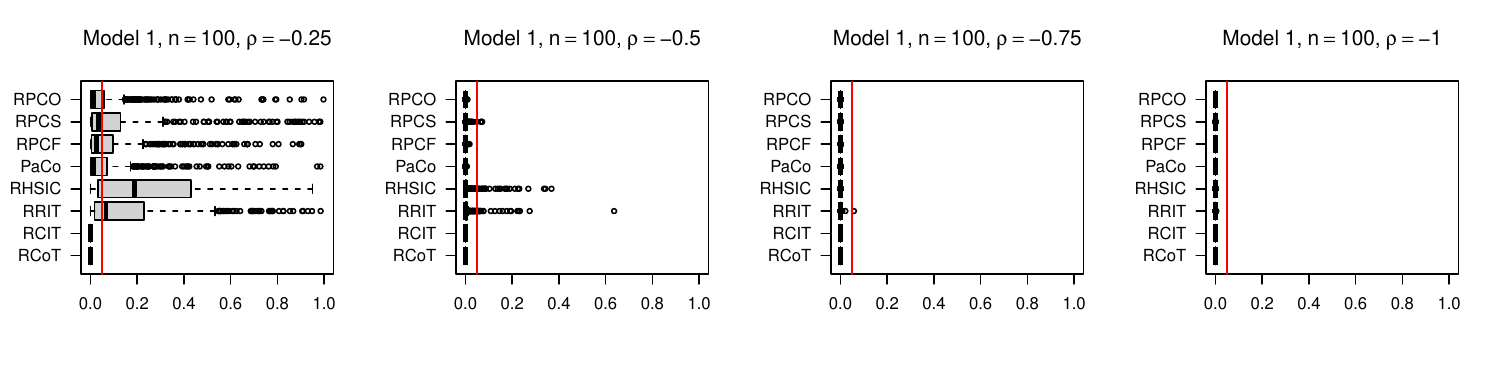}\vspace{-0.8cm}
    \includegraphics[width=\linewidth,page=3]{fig/model2-p1.pdf}\vspace{-0.8cm}
    \includegraphics[width=\linewidth,page=5]{fig/model2-p1.pdf}\vspace{-0.8cm}
    \includegraphics[width=\linewidth,page=7]{fig/model2-p1.pdf}\vspace{-0.8cm}
    \includegraphics[width=\linewidth,page=9]{fig/model2-p1.pdf}\vspace{-0.8cm}
    \includegraphics[width=\linewidth,page=11]{fig/model2-p1.pdf}\vspace{-0.8cm}
\caption{Boxplots of p-values of eight tests for Model 2 under the alternative hypothesis, with sample sizes $n=100,200,500,1000,2000,5000$, where the alternative distributions include $\rho= -0.25,  -0.5,  -0.75,  -1$. The red line represents 0.05.}
    \label{fig:boxplot-2a-3}
\end{figure}

\begin{figure}[htbp]
    \centering
    \includegraphics[width=\linewidth,page=1]{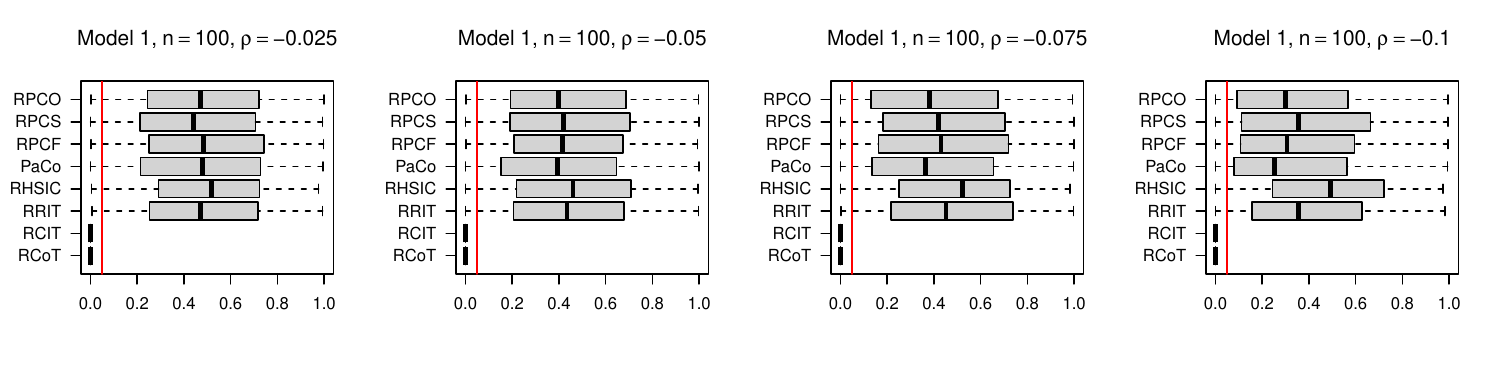}\vspace{-0.8cm}
    \includegraphics[width=\linewidth,page=3]{fig/model2-p2.pdf}\vspace{-0.8cm}
    \includegraphics[width=\linewidth,page=5]{fig/model2-p2.pdf}\vspace{-0.8cm}
    \includegraphics[width=\linewidth,page=7]{fig/model2-p2.pdf}\vspace{-0.8cm}
    \includegraphics[width=\linewidth,page=9]{fig/model2-p2.pdf}\vspace{-0.8cm}
    \includegraphics[width=\linewidth,page=11]{fig/model2-p2.pdf}\vspace{-0.8cm}
\caption{Boxplots of p-values of eight tests for Model 2 under the alternative hypothesis, with sample sizes $n=100,200,500,1000,2000,5000$, where the alternative distributions include $\rho= -0.025,  -0.05,  -0.075,  -0.1$. The red line represents 0.05.}
    \label{fig:boxplot-2a-4}
\end{figure}

\begin{table}[htbp]
    \centering
    \footnotesize
\begin{tabular}{rr|rrrrr|rrrrr}
\hline
 & & \multicolumn{5}{c|}{RPCS} & \multicolumn{5}{c}{RPCF} \\
\hline
$\rho$ & $n$ & Bias & SD & $\wh {\rm SD}$ & RMSE & 95\% cvg & Bias & SD & $\wh {\rm SD}$ & RMSE & 95\% cvg\\
\hline
\hline
-0.25 & 100 & 0.028 & 0.099 & 0.094 & 0.103 & 0.940 & 0.016 & 0.092 & 0.094 & 0.093 & 0.956\\
-0.25 & 200 & 0.008 & 0.070 & 0.066 & 0.070 & 0.930 & 0.002 & 0.065 & 0.066 & 0.065 & 0.944\\
-0.25 & 500 & 0.001 & 0.047 & 0.042 & 0.047 & 0.916 & -0.003 & 0.044 & 0.042 & 0.044 & 0.930\\
-0.25 & 1000 & 0.004 & 0.030 & 0.030 & 0.031 & 0.942 & 0.002 & 0.030 & 0.030 & 0.030 & 0.958\\
-0.25 & 2000 & 0.001 & 0.021 & 0.021 & 0.021 & 0.942 & 0.001 & 0.021 & 0.021 & 0.021 & 0.942\\
-0.25 & 5000 & 0.001 & 0.013 & 0.013 & 0.013 & 0.946 & 0.001 & 0.013 & 0.013 & 0.013 & 0.954\\
\hline
-0.50 & 100 & 0.049 & 0.089 & 0.079 & 0.101 & 0.884 & 0.024 & 0.075 & 0.077 & 0.079 & 0.954\\
-0.50 & 200 & 0.026 & 0.059 & 0.055 & 0.064 & 0.918 & 0.009 & 0.053 & 0.053 & 0.054 & 0.952\\
-0.50 & 500 & 0.010 & 0.038 & 0.034 & 0.039 & 0.932 & 0.004 & 0.032 & 0.034 & 0.032 & 0.968\\
-0.50 & 1000 & 0.006 & 0.028 & 0.024 & 0.028 & 0.928 & 0.003 & 0.025 & 0.024 & 0.025 & 0.956\\
-0.50 & 2000 & 0.003 & 0.017 & 0.017 & 0.017 & 0.944 & 0.002 & 0.016 & 0.017 & 0.016 & 0.954\\
-0.50 & 5000 & 0.002 & 0.011 & 0.011 & 0.011 & 0.944 & 0.001 & 0.011 & 0.011 & 0.011 & 0.940\\
\hline
-0.75 & 100 & 0.062 & 0.067 & 0.052 & 0.091 & 0.824 & 0.027 & 0.046 & 0.048 & 0.053 & 0.964\\
-0.75 & 200 & 0.032 & 0.044 & 0.034 & 0.054 & 0.870 & 0.012 & 0.031 & 0.032 & 0.033 & 0.960\\
-0.75 & 500 & 0.013 & 0.028 & 0.020 & 0.031 & 0.894 & 0.005 & 0.021 & 0.020 & 0.021 & 0.938\\
-0.75 & 1000 & 0.007 & 0.021 & 0.014 & 0.022 & 0.920 & 0.002 & 0.014 & 0.014 & 0.014 & 0.944\\
-0.75 & 2000 & 0.003 & 0.011 & 0.010 & 0.012 & 0.936 & 0.002 & 0.010 & 0.010 & 0.010 & 0.940\\
-0.75 & 5000 & 0.003 & 0.006 & 0.006 & 0.007 & 0.938 & 0.002 & 0.006 & 0.006 & 0.006 & 0.952\\
\bottomrule
\hline
\end{tabular}

\vspace{0.3cm}
\begin{tabular}{rr|rrrrr|rrrrr}
\hline
 & & \multicolumn{5}{c|}{RPCO} & \multicolumn{5}{c}{PaCo} \\
\hline
$\rho$ & $n$ & Bias & SD & $\wh {\rm SD}$ & RMSE & 95\% cvg & Bias & SD & $\wh {\rm SD}$ & RMSE & 95\% cvg\\
\hline
\hline
-0.25 & 100 & 0.010 & 0.092 & 0.093 & 0.092 & 0.956 & 0.010 & 0.094 & 0.093 & 0.095 & 0.954\\
-0.25 & 200 & -0.002 & 0.064 & 0.066 & 0.064 & 0.954 & -0.001 & 0.065 & 0.066 & 0.065 & 0.942\\
-0.25 & 500 & -0.004 & 0.044 & 0.042 & 0.044 & 0.938 & -0.003 & 0.044 & 0.042 & 0.044 & 0.930\\
-0.25 & 1000 & 0.001 & 0.030 & 0.030 & 0.030 & 0.948 & 0.002 & 0.030 & 0.030 & 0.030 & 0.952\\
-0.25 & 2000 & 0.000 & 0.021 & 0.021 & 0.021 & 0.946 & 0.000 & 0.021 & 0.021 & 0.021 & 0.942\\
-0.25 & 5000 & 0.000 & 0.013 & 0.013 & 0.013 & 0.952 & 0.001 & 0.013 & 0.013 & 0.013 & 0.952\\
\hline
-0.50 & 100 & 0.006 & 0.075 & 0.075 & 0.075 & 0.934 & 0.007 & 0.076 & 0.075 & 0.076 & 0.950\\
-0.50 & 200 & 0.002 & 0.053 & 0.053 & 0.053 & 0.948 & 0.004 & 0.053 & 0.053 & 0.053 & 0.948\\
-0.50 & 500 & 0.001 & 0.032 & 0.034 & 0.032 & 0.966 & 0.002 & 0.032 & 0.034 & 0.032 & 0.964\\
-0.50 & 1000 & 0.001 & 0.025 & 0.024 & 0.025 & 0.954 & 0.002 & 0.025 & 0.024 & 0.025 & 0.952\\
-0.50 & 2000 & 0.000 & 0.016 & 0.017 & 0.016 & 0.954 & 0.002 & 0.016 & 0.017 & 0.016 & 0.952\\
-0.50 & 5000 & 0.000 & 0.011 & 0.011 & 0.011 & 0.942 & 0.001 & 0.011 & 0.011 & 0.011 & 0.942\\
\hline
-0.75 & 100 & 0.002 & 0.042 & 0.044 & 0.042 & 0.954 & 0.003 & 0.044 & 0.044 & 0.044 & 0.948\\
-0.75 & 200 & 0.002 & 0.031 & 0.031 & 0.031 & 0.942 & 0.004 & 0.031 & 0.031 & 0.031 & 0.950\\
-0.75 & 500 & 0.001 & 0.020 & 0.020 & 0.020 & 0.938 & 0.003 & 0.021 & 0.020 & 0.021 & 0.932\\
-0.75 & 1000 & -0.001 & 0.014 & 0.014 & 0.014 & 0.938 & 0.002 & 0.014 & 0.014 & 0.014 & 0.948\\
-0.75 & 2000 & -0.001 & 0.009 & 0.010 & 0.009 & 0.950 & 0.002 & 0.010 & 0.010 & 0.010 & 0.950\\
-0.75 & 5000 & 0.000 & 0.006 & 0.006 & 0.006 & 0.966 & 0.002 & 0.006 & 0.006 & 0.006 & 0.954\\
\hline
\end{tabular}
    \caption{Results based on 500 estimates of $\rho$ under Model 1, with $\rho = -0.25,-0.5,-0.75$. }
    \label{tab:est-model1-p2}
\end{table}

\begin{table}[htbp]
    \centering
    \footnotesize
\begin{tabular}{rr|rrrrr|rrrrr}
\hline
 & & \multicolumn{5}{c|}{RPCS} & \multicolumn{5}{c}{RPCF} \\
\hline
$\rho$ & $n$ & Bias & SD & $\wh {\rm SD}$ & RMSE & 95\% cvg & Bias & SD & $\wh {\rm SD}$ & RMSE & 95\% cvg\\
\hline
\hline
0.00 & 100 & 0.018 & 0.111 & 0.099 & 0.112 & 0.912 & 0.016 & 0.105 & 0.099 & 0.106 & 0.924\\
0.00 & 200 & 0.006 & 0.072 & 0.070 & 0.072 & 0.934 & 0.004 & 0.069 & 0.070 & 0.069 & 0.954\\
0.00 & 500 & 0.004 & 0.048 & 0.045 & 0.048 & 0.926 & 0.002 & 0.048 & 0.045 & 0.048 & 0.928\\
0.00 & 1000 & 0.003 & 0.032 & 0.032 & 0.032 & 0.930 & 0.002 & 0.031 & 0.032 & 0.031 & 0.942\\
0.00 & 2000 & 0.002 & 0.023 & 0.022 & 0.023 & 0.954 & 0.001 & 0.022 & 0.022 & 0.022 & 0.958\\
0.00 & 5000 & 0.001 & 0.014 & 0.014 & 0.014 & 0.950 & 0.001 & 0.014 & 0.014 & 0.014 & 0.960\\
\hline
0.25 & 100 & -0.007 & 0.104 & 0.093 & 0.104 & 0.916 & 0.003 & 0.096 & 0.093 & 0.096 & 0.934\\
0.25 & 200 & -0.009 & 0.074 & 0.066 & 0.074 & 0.918 & -0.003 & 0.069 & 0.066 & 0.069 & 0.936\\
0.25 & 500 & 0.000 & 0.045 & 0.042 & 0.045 & 0.930 & 0.003 & 0.043 & 0.042 & 0.044 & 0.944\\
0.25 & 1000 & -0.005 & 0.032 & 0.030 & 0.032 & 0.920 & -0.003 & 0.029 & 0.030 & 0.030 & 0.942\\
0.25 & 2000 & -0.003 & 0.021 & 0.021 & 0.021 & 0.944 & -0.002 & 0.020 & 0.021 & 0.020 & 0.948\\
0.25 & 5000 & -0.002 & 0.014 & 0.013 & 0.014 & 0.950 & -0.001 & 0.013 & 0.013 & 0.013 & 0.946\\
\hline
0.50 & 100 & -0.027 & 0.086 & 0.077 & 0.090 & 0.918 & -0.011 & 0.076 & 0.075 & 0.076 & 0.954\\
0.50 & 200 & -0.021 & 0.062 & 0.054 & 0.065 & 0.920 & -0.007 & 0.053 & 0.053 & 0.053 & 0.950\\
0.50 & 500 & -0.008 & 0.039 & 0.034 & 0.040 & 0.892 & -0.002 & 0.035 & 0.034 & 0.035 & 0.936\\
0.50 & 1000 & -0.005 & 0.030 & 0.024 & 0.030 & 0.912 & 0.000 & 0.024 & 0.024 & 0.024 & 0.942\\
0.50 & 2000 & -0.003 & 0.022 & 0.017 & 0.022 & 0.910 & -0.001 & 0.017 & 0.017 & 0.017 & 0.938\\
0.50 & 5000 & -0.002 & 0.013 & 0.011 & 0.014 & 0.924 & 0.000 & 0.011 & 0.011 & 0.011 & 0.948\\
\hline
0.75 & 100 & -0.049 & 0.070 & 0.050 & 0.085 & 0.842 & -0.019 & 0.049 & 0.046 & 0.053 & 0.946\\
0.75 & 200 & -0.029 & 0.045 & 0.034 & 0.054 & 0.888 & -0.010 & 0.030 & 0.032 & 0.032 & 0.962\\
0.75 & 500 & -0.019 & 0.037 & 0.021 & 0.041 & 0.884 & -0.006 & 0.019 & 0.020 & 0.020 & 0.962\\
0.75 & 1000 & -0.010 & 0.027 & 0.014 & 0.028 & 0.882 & -0.003 & 0.014 & 0.014 & 0.015 & 0.948\\
0.75 & 2000 & -0.005 & 0.016 & 0.010 & 0.017 & 0.928 & -0.002 & 0.010 & 0.010 & 0.010 & 0.942\\
0.75 & 5000 & -0.003 & 0.012 & 0.006 & 0.013 & 0.918 & -0.001 & 0.006 & 0.006 & 0.006 & 0.952\\
\hline
\end{tabular}

\vspace{0.3cm}
\begin{tabular}{rr|rrrrr|rrrrr}
\hline
 & & \multicolumn{5}{c|}{RPCO} & \multicolumn{5}{c}{PaCo} \\
\hline
$\rho$ & $n$ & Bias & SD & $\wh {\rm SD}$ & RMSE & 95\% cvg & Bias & SD & $\wh {\rm SD}$ & RMSE & 95\% cvg\\
\hline
\hline
0.00 & 100 & 0.005 & 0.105 & 0.099 & 0.105 & 0.948 & 0.004 & 0.112 & 0.099 & 0.112 & 0.910\\
0.00 & 200 & -0.002 & 0.068 & 0.070 & 0.068 & 0.962 & -0.003 & 0.071 & 0.070 & 0.071 & 0.950\\
0.00 & 500 & 0.000 & 0.047 & 0.045 & 0.047 & 0.932 & 0.000 & 0.048 & 0.045 & 0.048 & 0.924\\
0.00 & 1000 & 0.001 & 0.031 & 0.032 & 0.031 & 0.942 & 0.001 & 0.031 & 0.032 & 0.031 & 0.942\\
0.00 & 2000 & 0.001 & 0.022 & 0.022 & 0.022 & 0.954 & 0.001 & 0.022 & 0.022 & 0.022 & 0.956\\
0.00 & 5000 & 0.001 & 0.014 & 0.014 & 0.014 & 0.956 & 0.001 & 0.014 & 0.014 & 0.014 & 0.958\\
\hline
0.25 & 100 & 0.003 & 0.096 & 0.093 & 0.096 & 0.942 & 0.003 & 0.100 & 0.093 & 0.100 & 0.922\\
0.25 & 200 & -0.003 & 0.068 & 0.066 & 0.068 & 0.932 & -0.004 & 0.070 & 0.066 & 0.070 & 0.926\\
0.25 & 500 & 0.003 & 0.043 & 0.042 & 0.043 & 0.948 & 0.003 & 0.044 & 0.042 & 0.044 & 0.940\\
0.25 & 1000 & -0.002 & 0.029 & 0.030 & 0.029 & 0.946 & -0.003 & 0.029 & 0.030 & 0.029 & 0.940\\
0.25 & 2000 & -0.001 & 0.020 & 0.021 & 0.020 & 0.956 & -0.002 & 0.020 & 0.021 & 0.020 & 0.950\\
0.25 & 5000 & -0.001 & 0.013 & 0.013 & 0.013 & 0.952 & -0.001 & 0.013 & 0.013 & 0.013 & 0.946\\
\hline
0.50 & 100 & 0.001 & 0.074 & 0.074 & 0.074 & 0.952 & -0.002 & 0.080 & 0.075 & 0.080 & 0.938\\
0.50 & 200 & 0.000 & 0.053 & 0.053 & 0.053 & 0.948 & -0.002 & 0.054 & 0.053 & 0.054 & 0.952\\
0.50 & 500 & 0.001 & 0.034 & 0.033 & 0.034 & 0.938 & 0.000 & 0.035 & 0.034 & 0.035 & 0.930\\
0.50 & 1000 & 0.002 & 0.024 & 0.024 & 0.024 & 0.934 & 0.001 & 0.025 & 0.024 & 0.025 & 0.936\\
0.50 & 2000 & 0.001 & 0.017 & 0.017 & 0.017 & 0.932 & 0.000 & 0.017 & 0.017 & 0.017 & 0.936\\
0.50 & 5000 & 0.001 & 0.010 & 0.011 & 0.010 & 0.952 & 0.000 & 0.011 & 0.011 & 0.011 & 0.946\\
\hline
0.75 & 100 & 0.001 & 0.046 & 0.043 & 0.046 & 0.932 & 0.001 & 0.048 & 0.043 & 0.048 & 0.912\\
0.75 & 200 & 0.001 & 0.030 & 0.031 & 0.030 & 0.942 & -0.001 & 0.031 & 0.031 & 0.031 & 0.950\\
0.75 & 500 & -0.001 & 0.019 & 0.020 & 0.019 & 0.960 & -0.002 & 0.019 & 0.020 & 0.019 & 0.958\\
0.75 & 1000 & 0.000 & 0.014 & 0.014 & 0.014 & 0.944 & -0.002 & 0.014 & 0.014 & 0.015 & 0.946\\
0.75 & 2000 & 0.000 & 0.010 & 0.010 & 0.010 & 0.944 & -0.002 & 0.010 & 0.010 & 0.010 & 0.950\\
0.75 & 5000 & 0.000 & 0.006 & 0.006 & 0.006 & 0.952 & -0.001 & 0.006 & 0.006 & 0.006 & 0.954\\
\hline
\end{tabular}
    \caption{Results based on 500 estimates of $\rho$ under Model 2, with $\rho = 0,0.25,0.5,0.75$. }
    \label{tab:est-model2-p1}
\end{table}

\begin{table}[htbp]
    \centering
    \footnotesize
\begin{tabular}{rr|rrrrr|rrrrr}
\hline
 & & \multicolumn{5}{c|}{RPCS} & \multicolumn{5}{c}{RPCF} \\
\hline
$\rho$ & $n$ & Bias & SD & $\wh {\rm SD}$ & RMSE & 95\% cvg & Bias & SD & $\wh {\rm SD}$ & RMSE & 95\% cvg\\
\hline
\hline
-0.25 & 100 & 0.037 & 0.100 & 0.094 & 0.107 & 0.902 & 0.024 & 0.095 & 0.094 & 0.098 & 0.942\\
-0.25 & 200 & 0.028 & 0.074 & 0.067 & 0.080 & 0.904 & 0.015 & 0.072 & 0.066 & 0.073 & 0.920\\
-0.25 & 500 & 0.015 & 0.042 & 0.042 & 0.044 & 0.946 & 0.009 & 0.040 & 0.042 & 0.041 & 0.968\\
-0.25 & 1000 & 0.006 & 0.032 & 0.030 & 0.033 & 0.928 & 0.002 & 0.030 & 0.030 & 0.030 & 0.950\\
-0.25 & 2000 & 0.002 & 0.020 & 0.021 & 0.021 & 0.966 & 0.000 & 0.020 & 0.021 & 0.020 & 0.960\\
-0.25 & 5000 & 0.001 & 0.013 & 0.013 & 0.013 & 0.954 & 0.000 & 0.012 & 0.013 & 0.012 & 0.958\\
\hline
-0.50 & 100 & 0.061 & 0.084 & 0.080 & 0.104 & 0.894 & 0.036 & 0.075 & 0.078 & 0.083 & 0.952\\
-0.50 & 200 & 0.040 & 0.062 & 0.055 & 0.073 & 0.908 & 0.019 & 0.054 & 0.054 & 0.057 & 0.952\\
-0.50 & 500 & 0.021 & 0.043 & 0.034 & 0.048 & 0.888 & 0.008 & 0.035 & 0.034 & 0.036 & 0.924\\
-0.50 & 1000 & 0.013 & 0.031 & 0.024 & 0.034 & 0.904 & 0.005 & 0.024 & 0.024 & 0.025 & 0.948\\
-0.50 & 2000 & 0.005 & 0.019 & 0.017 & 0.019 & 0.942 & 0.002 & 0.017 & 0.017 & 0.017 & 0.952\\
-0.50 & 5000 & 0.003 & 0.011 & 0.011 & 0.011 & 0.926 & 0.002 & 0.011 & 0.011 & 0.011 & 0.940\\
\hline
-0.75 & 100 & 0.075 & 0.067 & 0.054 & 0.100 & 0.782 & 0.043 & 0.051 & 0.050 & 0.067 & 0.916\\
-0.75 & 200 & 0.049 & 0.047 & 0.036 & 0.068 & 0.760 & 0.023 & 0.034 & 0.033 & 0.041 & 0.932\\
-0.75 & 500 & 0.026 & 0.038 & 0.021 & 0.046 & 0.780 & 0.010 & 0.020 & 0.020 & 0.022 & 0.952\\
-0.75 & 1000 & 0.013 & 0.025 & 0.014 & 0.029 & 0.860 & 0.004 & 0.014 & 0.014 & 0.015 & 0.946\\
-0.75 & 2000 & 0.006 & 0.018 & 0.010 & 0.019 & 0.902 & 0.003 & 0.010 & 0.010 & 0.011 & 0.930\\
-0.75 & 5000 & 0.004 & 0.010 & 0.006 & 0.010 & 0.908 & 0.002 & 0.006 & 0.006 & 0.007 & 0.938\\
\hline
\end{tabular}

\vspace{0.3cm}
\begin{tabular}{rr|rrrrr|rrrrr}
\hline
 & & \multicolumn{5}{c|}{RPCO} & \multicolumn{5}{c}{PaCo} \\
\hline
$\rho$ & $n$ & Bias & SD & $\wh {\rm SD}$ & RMSE & 95\% cvg & Bias & SD & $\wh {\rm SD}$ & RMSE & 95\% cvg\\
\hline
\hline
-0.25 & 100 & 0.002 & 0.094 & 0.093 & 0.094 & 0.930 & 0.003 & 0.097 & 0.093 & 0.097 & 0.928\\
-0.25 & 200 & 0.004 & 0.070 & 0.066 & 0.071 & 0.918 & 0.005 & 0.073 & 0.066 & 0.073 & 0.908\\
-0.25 & 500 & 0.004 & 0.040 & 0.042 & 0.040 & 0.962 & 0.005 & 0.041 & 0.042 & 0.041 & 0.958\\
-0.25 & 1000 & 0.000 & 0.030 & 0.030 & 0.030 & 0.948 & 0.000 & 0.030 & 0.030 & 0.030 & 0.944\\
-0.25 & 2000 & -0.001 & 0.020 & 0.021 & 0.020 & 0.960 & 0.000 & 0.020 & 0.021 & 0.020 & 0.964\\
-0.25 & 5000 & -0.001 & 0.012 & 0.013 & 0.012 & 0.956 & 0.000 & 0.012 & 0.013 & 0.012 & 0.958\\
\hline
-0.50 & 100 & 0.002 & 0.073 & 0.075 & 0.073 & 0.944 & 0.005 & 0.077 & 0.075 & 0.077 & 0.946\\
-0.50 & 200 & 0.002 & 0.055 & 0.053 & 0.055 & 0.948 & 0.003 & 0.056 & 0.053 & 0.056 & 0.946\\
-0.50 & 500 & 0.002 & 0.035 & 0.034 & 0.035 & 0.938 & 0.002 & 0.035 & 0.034 & 0.035 & 0.932\\
-0.50 & 1000 & 0.002 & 0.024 & 0.024 & 0.024 & 0.952 & 0.003 & 0.024 & 0.024 & 0.024 & 0.950\\
-0.50 & 2000 & 0.000 & 0.017 & 0.017 & 0.017 & 0.956 & 0.001 & 0.017 & 0.017 & 0.017 & 0.956\\
-0.50 & 5000 & 0.000 & 0.011 & 0.011 & 0.011 & 0.952 & 0.001 & 0.011 & 0.011 & 0.011 & 0.942\\
\hline
-0.75 & 100 & -0.001 & 0.044 & 0.043 & 0.044 & 0.940 & 0.001 & 0.048 & 0.044 & 0.047 & 0.926\\
-0.75 & 200 & 0.001 & 0.033 & 0.031 & 0.033 & 0.924 & 0.003 & 0.034 & 0.031 & 0.034 & 0.930\\
-0.75 & 500 & 0.001 & 0.019 & 0.020 & 0.019 & 0.960 & 0.002 & 0.020 & 0.020 & 0.020 & 0.956\\
-0.75 & 1000 & 0.000 & 0.014 & 0.014 & 0.014 & 0.952 & 0.001 & 0.014 & 0.014 & 0.014 & 0.944\\
-0.75 & 2000 & 0.000 & 0.010 & 0.010 & 0.010 & 0.952 & 0.001 & 0.010 & 0.010 & 0.010 & 0.942\\
-0.75 & 5000 & 0.000 & 0.006 & 0.006 & 0.006 & 0.960 & 0.001 & 0.006 & 0.006 & 0.006 & 0.948\\
\hline
\end{tabular}
    \caption{Results based on 500 estimates of $\rho$ under Model 2, with $\rho = -0.25,-0.5,-0.75$. }
    \label{tab:est-model2-p2}
\end{table}

\begin{table}[htbp]
    \centering
    \footnotesize
\begin{tabular}{rr|rrrrr|rrrrr}
\hline
 & & \multicolumn{5}{c|}{RPCS} & \multicolumn{5}{c}{RPCF} \\
\hline
$\rho$ & $n$ & Bias & SD & $\wh {\rm SD}$ & RMSE & 95\% cvg & Bias & SD & $\wh {\rm SD}$ & RMSE & 95\% cvg\\
\hline
\hline
0 & 100 & 0.033 & 0.117 & 0.099 & 0.122 & 0.874 & 0.032 & 0.103 & 0.099 & 0.108 & 0.930\\
0 & 200 & 0.021 & 0.080 & 0.070 & 0.083 & 0.906 & 0.023 & 0.071 & 0.070 & 0.075 & 0.926\\
0 & 500 & 0.015 & 0.048 & 0.045 & 0.051 & 0.938 & 0.013 & 0.045 & 0.045 & 0.047 & 0.944\\
0 & 1000 & 0.013 & 0.038 & 0.032 & 0.040 & 0.886 & 0.010 & 0.034 & 0.032 & 0.035 & 0.918\\
0 & 2000 & 0.008 & 0.026 & 0.022 & 0.027 & 0.892 & 0.007 & 0.024 & 0.022 & 0.025 & 0.914\\
0 & 5000 & 0.003 & 0.016 & 0.014 & 0.016 & 0.894 & 0.002 & 0.015 & 0.014 & 0.015 & 0.938\\
\hline
\end{tabular}

\vspace{0.3cm}
\begin{tabular}{rr|rrrrr|rrrrr}
\hline
 & & \multicolumn{5}{c|}{RPCO} & \multicolumn{5}{c}{PaCo} \\
\hline
$\rho$ & $n$ & Bias & SD & $\wh {\rm SD}$ & RMSE & 95\% cvg & Bias & SD & $\wh {\rm SD}$ & RMSE & 95\% cvg\\
\hline
\hline
0 & 100 & -0.003 & 0.102 & 0.099 & 0.102 & 0.944 & 0.144 & 0.104 & 0.097 & 0.178 & 0.650\\
0 & 200 & 0.002 & 0.070 & 0.070 & 0.070 & 0.958 & 0.135 & 0.068 & 0.069 & 0.151 & 0.536\\
0 & 500 & -0.001 & 0.043 & 0.045 & 0.042 & 0.956 & 0.142 & 0.043 & 0.044 & 0.148 & 0.096\\
0 & 1000 & 0.002 & 0.032 & 0.032 & 0.032 & 0.950 & 0.144 & 0.031 & 0.031 & 0.147 & 0.000\\
0 & 2000 & 0.000 & 0.022 & 0.022 & 0.022 & 0.940 & 0.144 & 0.022 & 0.022 & 0.146 & 0.000\\
0 & 5000 & 0.000 & 0.014 & 0.014 & 0.014 & 0.956 & 0.144 & 0.013 & 0.014 & 0.144 & 0.000\\
\hline
\end{tabular}
    \caption{Results based on 500 estimates of $\rho$ under Model 3, with $\rho = 0$. }
    \label{tab:est-model3}
\end{table}

\end{document}